\newtheorem{theorem}{Theorem}[section]
\newtheorem*{theorem A}{Theorem A}
\newtheorem*{theorem B}{Theorem B}
\newtheorem*{theorem C}{Theorem C}
\newtheorem{lemma}[theorem]{Lemma}
\newtheorem{definition}[theorem]{Definition}
\newtheorem{corollary}[theorem]{Corollary}
\newtheorem{remark}[theorem]{Remark}
\newtheorem{proposition}[theorem]{Proposition}
\let\oldtocsection=\tocsection
\let\oldtocsubsection=\tocsubsection
\renewcommand{\tocsection}[2]{\hspace{0em}\bf\oldtocsection{#1}{#2}}
\renewcommand{\tocsubsection}[2]{\hspace{1em}\oldtocsubsection{#1}{#2}}
\let\oldtocsubsubsection=\tocsubsubsection
\renewcommand{\tocsubsubsection}[2]{\hspace{2em}\oldtocsubsubsection{#1}{#2}}
\begin{document}
\author[Ehsani]{A. Ehsani$^1$}
\address{\centerline{$^{1,3,4}$ Department of Mathematics, Ferdowsi University of Mashhad, }
\centerline{Mashhad, Iran.}  }
\email{aza\_ehsani@yahoo.com}
\email{ghane@math.um.ac.ir}
\email{zaj.marzie@yahoo.com}
\author[Fakari]{A. Fakhari$^2$}
\address{\footnotesize \centerline{$^2$Department of Mathematics,} \centerline{Shahid
Beheshti University, G.C., Tehran 19839, Iran}}
\email{a\_fakhari@sbu.ac.ir}
\author[Ghane]{F. H. Ghane$^3$}
\author[Za]{M. Zaj$^4$}

 \thanks{$^*$Corresponding author}
\title[ SRB measures for certain class of non-uniformly hyperbolic endomorphisms] { SRB measures for certain class of non-uniformly hyperbolic endomorphisms on the solid torus}
 \keywords{skew products, attractors, invariant graph, ergodic measure, SRB measure, iterated function systems.} \subjclass[2010]
{37A05; 37C05; 37C40; 37C70; 37D25; 37H15.}\maketitle
\begin{abstract}
In this paper we address the existence and ergodicity of non-hyperbolic attracting sets for a
certain class of smooth endomorphisms on the solid torus. Such systems allow a formulation as a skew product system defined by planar diffeomorphisms have contraction on average which forced by any expanding circle map. These attractors are invariant graphs of upper semicontinuous maps which support exactly one $SRB$ measure. In our approach, these skew product systems arising from iterated function systems generated
by a finitely many weak contractive diffeomorphisms. Under some conditions including negative fiber Lyapunov exponents, we prove the existence of unique non-hyperbolic attracting invariant graphs for these systems which attract positive orbits of almost all initial points. Also, we prove that these systems are Bernoulli and therefore they are mixing.
Moreover, these properties remain true under small perturbations in the space of endomorphisms on the solid torus.
\end{abstract}
\section{Introduction and statement of results}
The main goal of this paper is to discuss non-hyperbolic attractors which are invariant graph and carry SRB measures. The existence of SRB measures was established for hyperbolic systems \cite{Bo, BR, Ru, Si} and then extended in the case of partial hyperbolicity \cite{ABV, BV, Ca, Ts}.
In the case of surface endomorphism, Tsujii \cite{Ts} proved that a generic partial hyperbolic endomorphism admits
finitely many physical measures whose basins have full Lebesgue measure. In higher dimensional case, Volk \cite{V} presented an
open set of smooth endomorphisms such that any of them had a transitive hyperbolic attractor
with a non-empty interior supporting a unique SRB measure.
In \cite{St2} Stark provides the conditions for the existence and regularity of invariant graphs and discuss a number of applications to the filtering of time series, to synchronization and to quasiperiodically forced systems. 
An invariant graph that attracts almost surely orbits is the natural generalization of a stable fixed point to the case of forced systems.
Attracting invariant graphs have a wide variety of applications in many branches of nonlinear dynamics (e.g. \cite{CD, DC, HOJ1, HOJ2, PC, SD, T} etc.).
Here we focus on skew product systems.
In skew product systems with uniformly contracting fiber maps, it is easy to see
that there exist invariant attracting sets for the overall dynamics, which are the graph of
continuous functions (see \cite{HP}, Theorem 6.1a, \cite{HPS}). In the nonuniform case, when the fiber map is contracting on average \cite{AC1, E1, E2}
(for instance when the skew product map possesses a negative maximal Lyapunov exponent in the fibre),
less is known about the stability of the dynamics under additive noise.
In \cite{BHN}, the authors have proved that this kind of systems preserve ergodicity and higher order mixing properties under deterministic perturbation of the fiber map and perturbation by
i.i.d. additive noise. Furthermore they have shown that the invariant measure for the perturbed system is continuous in the Hutchinson metric.
Here we extend this result to a certain class of endomorphisms. In fact, the aim of this paper is to discuss a class of smooth endomorphisms on the solid torus admitting robust non-uniformly hyperbolic attracting sets which are invariant graph and supporting a unique SRB measure.
To establish this result, first we prove it for a special class of skew products over expanding circle maps
with weak contractive planar fiber maps. In our approach we also prove the occurrence of a master-slave synchronization. A \emph{master-slave synchronization} means the convergence of orbits starting at different initial
points when iterated by the same sequence of diffeomorphisms
which is explained by a single attracting invariant graph for the skew
product system \cite{St1, St2}.
We also show that the attractors are the support of unique invariant ergodic SRB measures

Throughout this paper we assume that $X$ is a compact ball of $\mathbb{R}^2$ and $S^1$ is the unit circle. Take the solid torus $\mathbb{T}=S^1 \times X$.
Denote by $\mathcal{C}(\mathbb{T})$ the space of all skew products over $\varphi$ with the fiber $X$, where $\varphi$ is a linear expanding map on the circle, i.e. the maps of the form
$$F : (t,x) \mapsto (\varphi(t),f_t(x)), \ t \in S^1, \ x \in X.$$
Here $f_t$ is a diffeomorphism onto its image, depending $C^0$-continuously to the base parameter $t$. The metric on $\mathcal{C}(\mathbb{T})$ is defined as
\begin{equation}\label{e000}
 \textnormal{dist}(F, \widetilde{F}):= \sup_{t} \textnormal{dist}_{C^1}(f_t^{\pm 1}, \widetilde{f}_t^{\pm 1}).
\end{equation}
In this article, we generalize the concept of a bony graph \cite{KV} to our setting. We say that a closed invariant set of a skew product $F$ is a {\it bony graph}  if it intersects almost every fiber at a single point, and any other fiber at a compact connected set which is called a bone. A bony graph can be represented as a disjoint union of two sets, $K$ and $\Gamma$, where $K$ denotes the union of the
bones. The projection of $K$ by the natural projection map to the base has zero measure, while $\Gamma$ is the graph of some
measurable function from base to the fiber.
Let $\Delta$ be a maximal attractor of $F$. We say that $\Delta$ is
 a \emph{continuous-bony graph} (CBG) if $\Delta$ is a bony graph and the graph function is upper semicontinuous.
\begin{theorem A}
There exists a nonempty open set in $\mathcal{C}(\mathbb{T})$ such that any skew product $F$
belonging to it, admits a non-hyperbolic attractor $\Delta_F$ such that
\begin{enumerate}
\item $\Delta_F$ is a continuous bony graph,
\item $\Delta_F$ has a negative maximal Lyapunov exponent,
\item $\Delta_F$ is supporting a unique SRB measure.
\end{enumerate}
In particular, it is Bernoulli and so mixing.
\end{theorem A}
As we mentioned before, when the maps are uniformly contracting in the fiber direction it is extremely easy to prove the existence of a continuous invariant graph supporting an ergodic SRB measure.
In most cases it is difficult to get
sharp uniform bounds for the contraction rate of fiber maps, instead, this is usually expressed in terms of the most negative
Lyapunov exponent, which is an averaged quantity. It is important to understand the regularity
properties of graph function when the contraction in the fiber is given in terms of Lyapunov exponents.
Here we consider the nonuniform case
and construct Sinai-Ruelle-Bowen (SRB) measures supported on
non-uniformly hyperbolic attractors of smooth endomorphisms and show that these attractors have the form of an invariant graph.
In particular, the graph function is upper semicontinuous.

Denote by $\mathcal{E}(\mathbb{T})_k$ the
space of all $C^1$-smooth $k$-to-1 coverings (endomorphisms) of $\mathbb{T}$ by itself, for some suitable $k >2$, with $C^1$-topology.
\begin{theorem B}
There exists a nonempty open set in $\mathcal{E}(\mathbb{T})_k$ such that any  $\mathcal{F}$ belonging to it
admits a non-uniformly hyperbolic attractor which is a continuous bony invariant graph with negative maximal fiber Lyapunov exponent. Moreover, there exists a unique $SRB$ measure supported on
the graph. In particular, it is Bernoulli and therefore it is mixing.
\end{theorem B}
\textbf{Organization of the paper.}~Section 2 is devoted to the construction of random iterated function system cited throughout the article. First, some classical notions in the theory of iterated function systems are provided. In the first and second steps which are handled in the next sub section, we introduce a weakly hyperbolic iterated function systems which is the essential step in our construction of random iterated function systems, introduced explicitly in the last sub section. In section 3, we study topological properties of the attractor of the random iterated function systems. In particular, we show that the random iterated function systems admits a non-hyperbolic topological attractor with nonempty interior. Section 4 deals with the ergodic properties. We show the existence and uniqueness of SRB measures supporting on the unique attractor.
\section{Random Iterated Function Ssytem}
This section is devoted to the construction of the random iterated function systems applied in the main theorems of this article. First, we recall some common notions and definition in the theory of iterated function systems. Next, defining a single diffeomorphism, we introduce an iterated function systems applied in the construction of desired random iterated function system.
\subsection{Iterated Function Systems}
This section is devoted to study
 a certain class of non hyperbolic iterated function systems admitting compact connected attractors with nonempty interiors. Moreover, these attractors are the support of a unique SRB measure.
To state the main result of this section, we need to introduce some notations and recall several background definitions and concepts.

An iterated function system is the action of a semigroup generated by a
family of maps with a fixed distribution from which a map is chosen, independently at each
iterate. To be more precise, let  $\Lambda$ and $X$ be compact metric spaces and $\mathcal{F}=\{f_\lambda : \lambda \in \Lambda \}$ be a family of homeomorphisms on $X$.
The space $\Lambda$ is called the \emph{parameter space} and $X$ is called the \emph{fiber}.
The space $\Lambda^{\mathbb{N}}$ of infinite words with alphabet in $\Lambda$, endowed with the product topology, will be denoted by $\Omega^+ := \Lambda^{\mathbb{N}}$.  For each $k \in \mathbb{N}$, we set
$$\mathcal{F}^k := \mathcal{F}^{k-1} \circ \mathcal{F}, \ \mathcal{F}^0 := \{Id\},$$
where $\mathcal{F}^{k-1} \circ \mathcal{F}=\{f \circ g: f\in \mathcal{F}^{k-1}, \ g\in \mathcal{F} \}$.
Write $\langle \mathcal{F} \rangle^+$ for the semigroup generated by $\mathcal{F}$, that is,
$\langle \mathcal{F} \rangle^+ = \bigcup_{k=0}^\infty \mathcal{F}^k.$ The
action of the semigroup $\langle \mathcal{F} \rangle^+$ is called the \emph{iterated function system} (or IFS) associated to $\mathcal{F}$ and we denote it by IFS$(\mathcal{F})$.
For $x \in M$, we write the \emph{orbits} of the action of this semigroup as
$$\langle \mathcal{F} \rangle^+(x)=\{f(x) : f \in \langle \mathcal{F} \rangle^+ \}.$$
A sequence $\{x_n : n \in \mathbb{N}\}$ is called a \emph{branch of an orbit} of IFS$(\mathcal{F})$
if for each $n \in \mathbb{N}$ there is $f_n \in \mathcal{F}$ such that $x_{n+1}=f_n(x_n)$.
Let $\mathcal{K}(X)$ denote the set of nonempty compact subsets of $X$ endowed with the
Hausdorff metric topology. Then $\mathcal{K}(X)$ is also
a complete metric space and it is compact whenever $X$ is compact.
For an iterated function system IFS$(\mathcal{F})$ we define the associated Hutchinson
operator by $$\mathcal{L} : \mathcal{K}(X) \to \mathcal{K}(X), \ K \mapsto \mathcal{L}(K)= \bigcup_{\lambda \in \Lambda} f_\lambda(K).$$
A set $K \in \mathcal{K}(X)$ is a \emph{strict attractor} of IFS$(\mathcal{F})$ if there exists an open neighbourhood $U(K)\supset K$ such that in the Hausdorff metric
\begin{equation}\label{e21}
 \mathcal{L}^k(B)\to K \ \textnormal{as} \ k \to \infty, \ \textnormal{for} \ U(K)\supset B \in \mathcal{K}(X).
\end{equation}
The basin $B(K)$ of an attractor $K$ is the union of all open neighborhoods $U$ for which $(\ref{e21})$ holds.
We remark that it is usually to include in the definition of attractor that $\mathcal{L}(K) = K$ and $K$ is an $\mathcal{L}$-invariant set. If $U = X$ we say that the IFS$(\mathcal{F})$ possesses a \emph{global attractor}.

From now on, we assume that $(X,d)$ is a compact metric space and IFS$(\mathcal{F})$ is a finitely generated iterated function system on $X$ with generators $\{f_1, \ldots, k\}$.
Let $\Omega^+$ be the symbol space $ \{1,\dots, k \}^\mathbb{N}$ equipped with the product topology.
We consider any probability $\mathbb{P}^+$ on $\Omega^+$
with the following property: there exists $0 < p \leq \frac{1}{k}$ so that $\omega_n$ is selected randomly from $\{1, \ldots, k\}$ in such a way that the
probability of $\omega_n = i$ is greater than or equal to $p$, for
all $i \in \{1, \ldots, k\}$ and $n \in \mathbb{N}$. More formally, in terms of conditional probability,
$$\mathbb{P}^+(\omega_n=i|\omega_{n-1}, \ldots, \omega_1) \geq p.$$
Here, we consider the Bernoulli measure on $\Omega^+$ which is
a typical example of these kinds of probabilities.
Let $\sigma:\Omega^+ \to \Omega^+$ denote the left shift, i.e. $(\sigma \omega)_j = \omega_{j+1}$, for all $\omega \in \Omega^+$ and $j \geq 0$. It is well known that $\sigma$ is an ergodic transformation preserving the probability $\mathbb{P}^+$, see \cite{W2}.

The skew product $\varphi_{\mathcal{F}}$ associated to IFS$(\mathcal{F})$ is defined by
$$\varphi_{\mathcal{F}} : \Omega^+ \times X \to \Omega^+ \times X, \  \varphi_{\mathcal{F}}(\omega,x)=(\sigma \omega, f_{\omega_0}(x)).$$
Putting
$$\textnormal{Lip}_1(X) = \{f : X \to \mathbb{R} : |f(x)- f(y)| \leq d(x, y) \ \textnormal{for} \ \textnormal{all} \ x, y \in X\},$$
define the \emph{Hutchinson metric} on the set $\mathcal{M}(X)$, the space of all Borel probability measures,
by
\begin{equation*}
d_H(\nu,\mu) = \sup \{ |\int_X f d\nu - \int_X f d\mu : f \in \textnormal{Lip}_1(X)|\}.
\end{equation*}
In \cite[Thm.~3.1]{K}, the author proved that for every metric space $X$, the topology $\mathcal{T}$ on $\mathcal{M}(X)$ generated by $d_H(\nu,\mu)$
coincides with the topology $\mathcal{W}$ of weak convergence if and only if $\textnormal{diam}(X)< \infty$.
Moreover, the space $\mathcal{M}(X)$ is complete in the metric $d_H$ if and only if $X$ is complete (see \cite[Thm.~4.2]{K}).
We define the \emph{Transfer Operator} $T : \mathcal{M}(X) \to \mathcal{M}(X)$ by the formula,
$$T(\mu)(B) := \frac{1}{k}\sum_{i=1}^k \mu(f_i^{-1}(B)),$$ for any Borel subset $B$ and for each measure $\mu \in  \mathcal{M}(X)$. If a measure $\mu \in M(X)$ is
a fixed point of the transfer operator we say that $\mu$ is a \emph{stationary measure} (or an \emph{invariant measure}) for IFS$(\mathcal{F})$.
We say that an invariant measure for IFS$(\mathcal{F})$ is \emph{ergodic} if for every continuous function $\phi : X \to \mathbb{R}$, every $x \in X$ and $\mathbb{P}^+$-almost every $\omega \in \Omega^+$  we have
$$\lim_{n\to\infty} \frac{1}{n}\sum _{j=0}^n \phi(f_\omega^j(x))= \int_X \phi d\mu.$$
\subsection{Non-hyperbolic Attractor with Non-empty Interior}
Let $\mathcal{F}=\{f_1, \ldots, f_k\}$ be a
family of Lipschitz maps on X with the map $f_i$ having Lipschitz constant $C_i$. We form the $\textnormal{IFS}(X; f_1, \ldots, f_k: p_1, \ldots, p_k)$ by choosing $f_i$ with probability $p_i$ so that $\sum_{i=1}^k p_i=1$.

The iterated function system IFS$(\mathcal{F})$ is \emph{contractive on average} \cite{BDE} if
\begin{equation*}\label{e11}
  \sum_{i=1}^k p_i \log C_i< 0.
\end{equation*}
\begin{proposition}[\cite{BDE}, \cite{Fr} and \cite{BE}]\label{mainprop}
Let IFS$(\mathcal{F})$ be an IFS on a compact metric space $(X, d)$.
Then a probability measure $\mu \in \mathcal{M}(X)$ is invariant under the IFS$(\mathcal{F})$, i.e. $\mathbb{P}^+ \times \mu  $ is invariant under the skew product $\varphi_{\mathcal{F}}$ associated with IFS$(\mathcal{F})$, if and only if $\mu$ is a fixed point of the Markov operator operator $T$. Moreover, if the IFS$(\mathcal{F})$ is contractive on average then
\begin{itemize}
\item the operator $T$ is contractive with respect to the Hutchinson metric and as a consequence,
IFS$(\mathcal{F})$ has a unique invariant probability measure, $\mu$ say,
\item for any continuous function
 $\phi:X \to \mathbb{R}$ and any $x \in X$, we have
\begin{equation*}
\lim_{n \to + \infty}\frac{1}{n}\sum_{i=0}^{n-1}\phi(f_\omega^i(x))=\int_X \phi(x)d\mu(x), \ \textnormal{for} \ \mathbb{P}^+\textnormal{-}a.e \ \omega \in \Omega^+,
\end{equation*}
where $f_\omega^i:=f_{\omega_i}\circ \ldots \circ f_{\omega_1}$.
\item  if $K$ is the support of $\mu$, then $x \in K$ if and only if for every neighborhood of $x$, almost
all trajectories visit the neighborhood infinitely often.
\end{itemize}
\end{proposition}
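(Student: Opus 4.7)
The statement packages together several assertions, so I would split the argument into four pieces.

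\textbf{Equivalence of invariance.} The plan is to verify the iff using the explicit form of $\varphi_{\mathcal{F}}$. Working on cylinder-times-Borel sets $[i_0\cdots i_{n-1}]\times B$, one has
\[
\varphi_{\mathcal{F}}^{-1}([i_0\cdots i_{n-1}]\times B)=\bigsqcup_{j=1}^{k}[j\,i_0\cdots i_{n-1}]\times f_j^{-1}(B),
\]
so by the product structure of $\mathbb{P}^+$ the invariance of $\mathbb{P}^+\times\mu$ under $\varphi_{\mathcal{F}}$ on such sets reduces precisely to $\mu(B)=\sum_j p_j\,\mu(f_j^{-1}(B))=T\mu(B)$. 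A standard $\pi$--$\lambda$ extension then propagates the identity to the whole Borel $\sigma$-algebra on $\Omega^+\times X$, giving the stated equivalence.

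\textbf{Contractivity of $T$ and uniqueness.} Using the Kantorovich--Rubinstein duality supplied by the definition of $d_H$, for any $g\in\mathrm{Lip}_1(X)$ I would write
\[
\Bigl|\int g\,d(T\nu-T\mu)\Bigr|=\Bigl|\sum_{i=1}^{k}p_i\int (g\circ f_i)\,d(\nu-\mu)\Bigr|\le \sum_{i=1}^{k}p_i C_i\,d_H(\nu,\mu),
\]
since $g\circ f_i\in\mathrm{Lip}_{C_i}(X)$. Iterating this estimate, the Lipschitz constant of a random composition $f_{\omega_n}\circ\cdots\circ f_{\omega_1}$ is controlled by $\prod_{j\le n} C_{\omega_j}$, and the contracting-on-average hypothesis $\sum_i p_i\log C_i<0$ together with the law of large numbers guarantees $\frac{1}{n}\log\prod_{j\le n}C_{\omega_j}\to \sum_i p_i\log C_i<0$ almost surely. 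Consequently some iterate $T^{n_0}$ is a strict contraction on the complete metric space $(\mathcal{M}(X),d_H)$ (completeness comes from \cite[Thm.~4.2]{K}), and Banach's fixed point theorem, applied to $T^{n_0}$ and then transferred back to $T$, produces a unique stationary $\mu$.

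\textbf{Birkhoff-type averages.} Uniqueness of $\mu$ implies that $\mathbb{P}^+\times\mu$ is the unique $\varphi_{\mathcal{F}}$-invariant measure projecting to $\mathbb{P}^+$ on the base, which forces its ergodicity (otherwise an ergodic component would project to a different stationary measure). Applying Birkhoff's theorem to $\varphi_{\mathcal{F}}$ with the observable $\widetilde{\phi}(\omega,x)=\phi(x)$ yields, for $(\mathbb{P}^+\times\mu)$-a.e.\ $(\omega,x)$,
\[
\frac{1}{n}\sum_{i=0}^{n-1}\phi\bigl(f_\omega^i(x)\bigr)\longrightarrow \int_X\phi\,d\mu.
\]
To upgrade ``a.e.\ $x$'' to ``every $x$'', I would use the uniform contractivity of random compositions established in the previous step: two nearby orbits started from $x$ and $x'$ under the same $\omega$ get exponentially close, so the Birkhoff limit is independent of $x$, and the conclusion holds for every $x$ and $\mathbb{P}^+$-a.e.\ $\omega$.

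\textbf{Support characterization.} For $x\in\mathrm{supp}(\mu)$ and any neighborhood $U$ of $x$ one has $\mu(U)>0$; approximating $\chi_U$ from below by continuous functions and applying the previous step shows that $\mathbb{P}^+$-a.e.\ branch visits $U$ with positive frequency, hence infinitely often. Conversely, if $\mu(U)=0$ for some neighborhood $U$ of $x$, the same Birkhoff statement produces a full-measure set of $\omega$ whose orbits spend asymptotically zero time in $U$, forbidding infinitely many visits for $\mathbb{P}^+$-typical $\omega$.

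The main obstacle is the second paragraph: $T$ itself need not contract in one step under merely $\sum_i p_i\log C_i<0$ (the geometric mean controls contractivity, not the arithmetic one), so the delicate point is to exploit the a.s.\ contractivity of random compositions to pass to a contracting iterate $T^{n_0}$ on $(\mathcal{M}(X),d_H)$ uniformly, which is where \cite{BDE,Fr,BE} are invoked.
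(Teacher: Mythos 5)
First, note that the paper itself offers no proof of this proposition: it is imported wholesale from \cite{BDE}, \cite{Fr} and \cite{BE}, so your write-up can only be judged on its own merits. Your first paragraph (cylinder sets plus a $\pi$--$\lambda$ argument) and the reduction of the Birkhoff statement from a.e.\ $x$ to every $x$ via almost-sure contraction of the random compositions are sound. The serious gap is exactly at the point you flag but then assert anyway: under only $\sum_i p_i\log C_i<0$ you cannot conclude that some iterate $T^{n_0}$ is a strict contraction of $(\mathcal{M}(X),d_H)$. Iterating your one-step estimate gives $d_H(T^n\nu,T^n\mu)\le\bigl(\sum_i p_iC_i\bigr)^n d_H(\nu,\mu)$, i.e.\ the contraction factor is governed by the \emph{arithmetic} mean $\mathbb{E}[C]$, which may well exceed $1$ (take $C_1=100$, $C_2=10^{-6}$, $p_1=p_2=1/2$); the law of large numbers controls the typical product $\prod_j C_{\omega_j}$, but the Hutchinson/Wasserstein estimate involves an \emph{average over} $\omega$, which can be dominated by the rare expanding words. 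So Banach's theorem cannot be invoked as you do. The standard repairs are genuinely different: either exploit compactness of $X$ to bound the fiberwise distance by $\min\bigl(\prod_j C_{\omega_j}\,d(x,y),\ \mathrm{diam}(X)\bigr)$ and use dominated convergence to show $\sup_{\mu,\nu}d_H(T^n\mu,T^n\nu)\to0$ (uniformly small image, which still yields a unique fixed point even though no iterate is a $d_H$-contraction), or construct the stationary measure via a.s.\ limits of backward iterations, which is how \cite{BDE}, \cite{BE} and Elton actually proceed. Relatedly, your one-line ergodicity claim (``an ergodic component would project to a different stationary measure'') silently uses that ergodic components of $\mathbb{P}^+\times\mu$ are again of product form with stationary fiber marginal; the clean route is: uniqueness makes $\mu$ an extreme point of the convex set of stationary measures, hence ergodic, and then one invokes the standard (but not one-line) equivalence between ergodicity of $\mu$ and of $\mathbb{P}^+\times\mu$.

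There is a second, smaller but genuine, error in the support characterization: from the Birkhoff averages you only get that orbits spend asymptotically \emph{zero frequency} of time in a $\mu$-null neighborhood $U$, and zero frequency does not forbid infinitely many visits. The correct converse uses structure you already have: stationarity $\mu=\sum_i p_i (f_i)_\ast\mu$ gives $f_i(\mathrm{supp}\,\mu)\subset\mathrm{supp}\,\mu$ for every $i$, so picking any $x'\in K=\mathrm{supp}\,\mu$ and using $d(f^n_\omega x,f^n_\omega x')\le\prod_j C_{\omega_j}\,\mathrm{diam}(X)\to0$ a.s.\ yields $d(f^n_\omega x,K)\to0$ for $\mathbb{P}^+$-a.e.\ $\omega$; hence if $x\notin K$ and $\overline{U}\cap K=\emptyset$, the orbit visits $U$ only finitely often almost surely. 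With these two repairs (and the forward direction of the support statement as you wrote it), the proof closes.
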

We say that $\omega=(\omega_1 \omega_2 \ldots \omega_n \ldots) \in \Omega^+$ is \emph{disjunctive} to mean that, given any $n \in \mathbb{N}$ and any finite word $\theta_1 \ldots \theta_n$ of the alphabets $\{1, \ldots, k\}$,
there is an $L \in \mathbb{N}$ such that $\omega_L \ldots \omega_{L+n-1} = \theta_1 \ldots \theta_n$.
An attractor $K$ of IFS($\mathcal{F}$) is \emph{point-fibred} if for every compact subset
$C \subset B(K)$,
$$\lim_{n \to \infty}f_{\omega_1}\circ \ldots \circ f_{\omega_n}(C) \subset K, $$
is a singleton, independent of $C$ in which, the 
convergence happens in Hausdorff metric.
\begin{corollary}
Let $\textnormal{IFS}(\mathcal{F})$ be a contractive on average iterated function system of homeomorphisms on a compact metric space $X$ with a unique global point-fibred attractor $K$. Then $\textnormal{IFS}(\mathcal{F})$ admits an invariant ergodic measure $\mu$ with $\textnormal{supp}(\mu)=K$.
\end{corollary}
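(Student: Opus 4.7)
The plan is to read off existence, uniqueness, and ergodicity of the invariant measure from Proposition~\ref{mainprop}, and then to identify its support with $K$ via the coding map supplied by point-fibredness. For the first part, since $\textnormal{IFS}(\mathcal{F})$ is contractive on average, Proposition~\ref{mainprop} directly produces a unique invariant probability measure $\mu$ together with the Birkhoff-type convergence
\[
\lim_{n\to\infty}\frac{1}{n}\sum_{i=0}^{n-1}\phi(f_\omega^i(x))=\int_X\phi\,d\mu
\]
for every continuous $\phi:X\to\mathbb{R}$, every $x\in X$, and $\mathbb{P}^+$-a.e.\ $\omega\in\Omega^+$; this is literally the definition of ergodicity adopted in Section~2.1, so $\mu$ is ergodic with no extra work needed.

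For the inclusion $\textnormal{supp}(\mu)\subset K$ I would invoke the third bullet of Proposition~\ref{mainprop}, by which every $x\in\textnormal{supp}(\mu)$ has the property that $\mathbb{P}^+$-a.e.\ trajectory visits every neighborhood of $x$ infinitely often. Since $K$ is a global attractor, $\mathcal{L}^n(\{x_0\})$ converges to $K$ in Hausdorff distance for each $x_0\in X$, so every individual trajectory eventually enters any prescribed neighborhood of $K$; choosing disjoint neighborhoods of $x$ and $K$ in case $x\notin K$ then produces an immediate contradiction.

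For the opposite inclusion $K\subset\textnormal{supp}(\mu)$ I would use point-fibredness to construct the coding map
\[
\pi:\Omega^+\longrightarrow X,\qquad \pi(\omega):=\lim_{n\to\infty}f_{\omega_1}\circ\cdots\circ f_{\omega_n}(x_0),
\]
which is well defined (independent of $x_0\in X$) and continuous. The functional equation $\pi(i\omega)=f_i(\pi(\omega))$ makes $\pi(\Omega^+)$ a compact $\mathcal{L}$-invariant subset of $X$, and the attractor property applied to the compact set $\pi(\Omega^+)\subset B(K)=X$ forces $\pi(\Omega^+)=K$, since $\mathcal{L}^n(\pi(\Omega^+))$ both equals $\pi(\Omega^+)$ and tends to $K$. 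A short change-of-variables using the conjugacy $\pi\circ\tau_i=f_i\circ\pi$ with $\tau_i:\omega\mapsto i\omega$ shows that $\pi_{*}\mathbb{P}^+$ is $T$-invariant, so the uniqueness part of Proposition~\ref{mainprop} yields $\mu=\pi_{*}\mathbb{P}^+$; and the hypothesis $\mathbb{P}^+(\omega_n=i\mid\omega_{n-1},\dots,\omega_1)\geq p>0$ inductively gives $\textnormal{supp}(\mathbb{P}^+)=\Omega^+$, whence $\textnormal{supp}(\mu)=\pi(\Omega^+)=K$ by continuity of $\pi$ and compactness of $\Omega^+$. The step I expect to require the most care is the verification of $T$-invariance of $\pi_{*}\mathbb{P}^+$, since one must reconcile the uniform weights $1/k$ appearing in the definition of $T$ with the possibly non-Bernoulli law $\mathbb{P}^+$ allowed in the paper; everything else in the argument is either a direct quotation of Proposition~\ref{mainprop} or a standard compactness/continuity manipulation.
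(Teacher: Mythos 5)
Your proposal is correct, but its second half takes a genuinely different route from the paper. The paper, like you, obtains existence, uniqueness and ergodicity of $\mu$ directly from Proposition~\ref{mainprop}; for the identity $\textnormal{supp}(\mu)=K$, however, it invokes the chaos game theorem of Barnsley and Le\'sniak (Theorem~6 of the reference cited as [BV]): since $K$ is a point-fibred global attractor, for every disjunctive $\omega$ and every $x$ the orbital branch $\{f_\omega^n(x)\}$ is dense in $K$, and as disjunctive sequences have full $\mathbb{P}^+$-measure this density, combined with the last bullet of Proposition~\ref{mainprop} (the ``almost all trajectories visit every neighborhood infinitely often'' characterization of the support), gives $\textnormal{supp}(\mu)=K$. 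You instead build the coding map $\pi$ from point-fibredness, use the functional equation $\pi(i\omega)=f_i(\pi(\omega))$ to see that $\pi(\Omega^+)$ is a compact $\mathcal{L}$-invariant set and hence equals $K$, identify $\mu=\pi_*\mathbb{P}^+$ via $T$-invariance and uniqueness, and read off the support from $\textnormal{supp}(\mathbb{P}^+)=\Omega^+$. Both arguments are sound. Yours is more self-contained (no appeal to the chaos game theorem), yields the explicit formula $\mu=\pi_*\mathbb{P}^+$, and makes the inclusion $\textnormal{supp}(\mu)\subset K$ explicit, which the paper leaves implicit; its cost is exactly the point you flag: $T$-invariance of $\pi_*\mathbb{P}^+$ uses the uniform weights $1/k$, so the identification is tied to the Bernoulli choice of $\mathbb{P}^+$ that the paper adopts, whereas the disjunctive-sequence argument only needs the conditional probabilities to be bounded below by $p>0$. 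Finally, the continuity of $\pi$, which you assert without proof, does follow quickly from the hypotheses: if $\omega'$ agrees with $\omega$ in its first $n$ symbols, then both $\pi(\omega)$ and $\pi(\omega')$ lie in $f_{\omega_1}\circ\cdots\circ f_{\omega_n}(X)$, whose diameter tends to $0$ by point-fibredness applied to the compact set $X=B(K)$.
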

\begin{proof}
By Proposition \ref{mainprop}, IFS($\mathcal{F}$) admits a unique ergodic invariant measure $\mu$. So, it remains to show that $\textnormal{supp}(\mu)=K$. Since IFS($\mathcal{F}$) has a unique global point-fibred attractor $K$ then Theorem 6 of \cite{BV} ensures that for each disjunctive sequence $\omega \in \Omega^+$ and each $x \in X$, the orbital branch $\mathcal{O}(x,\omega)=\{f_\omega^n(x): n\geq 0\}$ is dense in $K$. Now this fact with together the last statement of Proposition \ref{mainprop} imply that $\textnormal{supp}(\mu)=K$.
\end{proof}
Edalat  in \cite{E} defined the notion of weakly hyperbolic iterated function systems
as a finite collection of maps on a compact metric space
such that the diameter of the space by any combination of the maps goes to zero. More precisely, an iterated function system IFS$(\mathcal{F})$ is \emph{weakly hyperbolic} \cite{AJ, E} if for each $\omega=(\omega_1 \omega_2 \ldots \omega_n \ldots) \in \Omega^+$, $\textnormal{diam}(f_{\omega_1} \circ f_{\omega_2}\circ \ldots \circ f_{\omega_n}(X))\to 0$, whenever $n \to \infty$.


The iterated function system IFS$(\mathcal{F})$ has
\emph{covering property} if there exists an open set $D$ such that
$$D\subset \bigcup_{i=1}^k f_i(D).$$
Let $\Omega^-=\{1, \ldots, k\}^{\mathbb{Z}^-}$. Also let the fiber maps $f_i$, $i=1, \ldots, k,$ of IFS$(\mathcal{F})$ are uniformly  contracting. Therefore, $\textnormal{diam}(f_{\omega_{-1}}\circ \ldots \circ f_{\omega_{-n}}(X))$ tends to zero whenever $n \to +\infty$.
We define the \emph{limit set} of IFS$(\mathcal{F})$ by
\begin{equation*}\label{e41}
\Lambda:=\{x \in X: \exists \omega=(\ldots \omega_{-n}, \ldots, \omega_{-1}) \in \Omega^-, \ x=\lim_{n \to - \infty}f_{\omega_{-1}}\circ \ldots \circ f_{\omega_{-n}}(X)\}.
\end{equation*}
One easily proves that \cite{BCP} if IFS$(\mathcal{F})$ satisfies the covering property with the open set D, then the
limit set of the IFS contains $C^1$-robustly $D$. Hence the covering property is a sufficient condition for an IFS (with uniformly contracting fiber maps) to have $C^1$-robustly
 non-empty interior. Here, we improve this result to IFSs with nonuniform contracting fiber maps.
\begin{lemma}\cite[Corolary.~2.5]{AJ}\label{lem444}
Let IFS$(\mathcal{F})$ be a weakly hyperbolic finitely generated iterated function system with generators $\{f_1, f_2, \ldots, f_k \}$ on a compact metric space $X$. Then the limit
$$\Gamma(\omega, x)=\lim_{n \to +\infty} f_{\omega_1} \circ f_{\omega_2}\circ \ldots \circ f_{\omega_n}(x) $$
exists for every $\omega \in \Omega^+$ and $x \in X$, does not depend on $x$ and is uniform on $\omega$ and $x$.
\end{lemma}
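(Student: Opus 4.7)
The plan is to exploit the nested, shrinking nature of the sets $X_n^\omega := f_{\omega_1}\circ f_{\omega_2}\circ\cdots\circ f_{\omega_n}(X)$ together with a compactness argument on the symbol space.

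First, since each $f_i$ is a homeomorphism of the compact space $X$, we have $f_{\omega_n}(X)\subset X$, and therefore
\[
X_{n+1}^\omega = f_{\omega_1}\circ\cdots\circ f_{\omega_n}\bigl(f_{\omega_{n+1}}(X)\bigr)\subset X_n^\omega.
\]
So $\{X_n^\omega\}_{n\ge 1}$ is a decreasing sequence of nonempty compact sets, and by the weakly hyperbolic hypothesis $\textnormal{diam}(X_n^\omega)\to 0$. Cantor's intersection theorem then gives a single point $\Gamma(\omega)\in X$ with $\bigcap_{n\ge 1}X_n^\omega = \{\Gamma(\omega)\}$. This gives both existence and independence from $x$: for every $x\in X$ the point $f_{\omega_1}\circ\cdots\circ f_{\omega_n}(x)$ lies in $X_n^\omega$, so
\[
d\bigl(f_{\omega_1}\circ\cdots\circ f_{\omega_n}(x),\;\Gamma(\omega)\bigr)\le \textnormal{diam}(X_n^\omega),
\]
which already yields uniform convergence in $x\in X$ for each fixed $\omega$.

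The step that requires a little more care is uniformity in $\omega\in\Omega^+$. I would set $g_n(\omega):=\textnormal{diam}(X_n^\omega)$ and observe two properties:
\begin{enumerate}
\item[(i)] $g_n$ depends only on the first $n$ symbols $(\omega_1,\ldots,\omega_n)$, so as a function on the product space $\Omega^+=\{1,\ldots,k\}^{\mathbb{N}}$ it is locally constant, and in particular continuous;
\item[(ii)] the sequence $g_n$ is monotone decreasing in $n$ by the nesting above, and $g_n(\omega)\downarrow 0$ pointwise.
\end{enumerate}
Since $\Omega^+$ is compact (as a product of finite discrete spaces) and the pointwise limit $0$ is continuous, Dini's theorem applies and gives $\sup_{\omega}g_n(\omega)\to 0$. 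Combining with the previous estimate yields
\[
\sup_{\omega\in\Omega^+}\sup_{x\in X} d\bigl(f_{\omega_1}\circ\cdots\circ f_{\omega_n}(x),\;\Gamma(\omega)\bigr)\le \sup_\omega g_n(\omega)\longrightarrow 0,
\]
which is the claimed joint uniform convergence in $\omega$ and $x$.

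The only subtle point—and the step I expect to be the main obstacle if one tries to skip it—is verifying uniformity in $\omega$: the hypothesis of weak hyperbolicity is only a pointwise statement about diameters going to zero, with no a priori rate. Without invoking the compactness of $\Omega^+$ and monotonicity to apply Dini, there is no reason the rate should be uniform across $\Omega^+$. Once those two structural features are noted, the argument collapses to the Cantor intersection theorem plus Dini's theorem, and the uniqueness/independence-of-$x$ claim is immediate from the intersection being a single point.
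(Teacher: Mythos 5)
Your argument is correct: the nesting $X_{n+1}^\omega\subset X_n^\omega$ plus Cantor's intersection theorem gives existence and independence of $x$, and your two observations (each $\textnormal{diam}(X_n^\omega)$ depends only on the first $n$ symbols, hence is continuous on the compact space $\Omega^+$, and is monotone in $n$) legitimately feed into Dini's theorem to upgrade the pointwise hypothesis of weak hyperbolicity to a uniform rate. Note that the paper itself offers no proof here --- it simply cites \cite[Cor.~2.5]{AJ} --- and your Cantor-plus-Dini argument is essentially the same compactness-of-the-code-space mechanism used in that reference (there phrased as a contradiction/subsequence argument on $\Omega^+$, which for a monotone sequence of locally constant functions is exactly Dini), so your write-up is a correct, self-contained substitute for the citation.
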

Observe that the above lemma defines a map $\Gamma: \Omega^+ \to X$, given by
\begin{equation*}
\Gamma(\omega):= \lim_{n \to +\infty} f_{\omega_1} \circ f_{\omega_2}\circ \ldots \circ f_{\omega_n}(x), \ \textnormal{for \ any} \ x \in X.
\end{equation*}
By \cite[Lemma~2.7]{AJ}, the mapping $\Gamma$ is continuous in the product topology on $\Omega^+$. Moreover, IFS$(\mathcal{F})$ admits a unique
attractor $K$ with the basin $B(K)=X$. In particular, the attractor $K$ is defined \cite[Theorem~A]{AJ} as follows:
\begin{equation*}\label{e044}
  K:=\Gamma(\Omega^+)=\{\lim_{n \to +\infty} f_{\omega_1} \circ f_{\omega_2}\circ \ldots \circ f_{\omega_n}(x):\omega \in \Omega^+ \}.
\end{equation*}
\begin{remark}
The approach used in the proof of \cite[Thm.~A]{AJ} ensures that $K$ is a point-fibred attractor.
\end{remark}
\begin{lemma}
Let IFS$(\mathcal{F})$ be a weakly hyperbolic finitely generated iterated function system with generators $\{f_1, f_2, \ldots, f_k \}$ and a unique global attractor $K$. Also let IFS$(\mathcal{F})$ satisfy the covering property
\begin{equation}\label{e33}
  \overline{B} \subset f_1(B) \cup \ldots \cup f_k(B).
\end{equation}
Then $\overline{B} \subset K$. In particular, $K$ has nonempty interior.
\end{lemma}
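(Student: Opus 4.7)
The plan is to unwind the covering property iteratively: given $y\in\overline{B}$, I want to build a symbolic sequence $\omega\in\Omega^+$ and a sequence of points $y_n\in B$ with $y=f_{\omega_1}\circ\cdots\circ f_{\omega_n}(y_n)$ for every $n$, and then apply Lemma \ref{lem444} to identify $y$ with $\Gamma(\omega)\in K$.

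First I would fix $y\in\overline{B}$ and use the hypothesis (\ref{e33}) to choose an index $\omega_1\in\{1,\dots,k\}$ and a point $y_1\in B$ with $y=f_{\omega_1}(y_1)$. Since $y_1\in B\subset\overline{B}$, the same covering property applies to $y_1$, yielding $\omega_2$ and $y_2\in B$ with $y_1=f_{\omega_2}(y_2)$. Iterating this selection produces an infinite word $\omega=(\omega_1\omega_2\ldots)\in\Omega^+$ and a sequence $(y_n)_{n\ge 1}\subset B$ satisfying
\begin{equation*}
y \;=\; f_{\omega_1}\circ f_{\omega_2}\circ\cdots\circ f_{\omega_n}(y_n) \quad\text{for every } n\ge 1.
\end{equation*}

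Next I would invoke weak hyperbolicity. Since $y_n\in B\subset X$ for all $n$, we have $y\in f_{\omega_1}\circ\cdots\circ f_{\omega_n}(X)$, and by weak hyperbolicity the diameter of this set tends to zero. On the other hand, Lemma \ref{lem444} gives that, for an arbitrary basepoint $x\in X$,
\begin{equation*}
\lim_{n\to\infty} f_{\omega_1}\circ\cdots\circ f_{\omega_n}(x) \;=\; \Gamma(\omega) \;\in\; K.
\end{equation*}
Since $y$ and $f_{\omega_1}\circ\cdots\circ f_{\omega_n}(x)$ both lie in a set whose diameter shrinks to $0$, they share the same limit, so $y=\Gamma(\omega)\in K$. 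As $y\in\overline{B}$ was arbitrary, this proves $\overline{B}\subset K$.

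For the last assertion, $B$ is open and nonempty by the standing convention of the covering property, and $B\subset\overline{B}\subset K$ forces $K$ to contain a nonempty open set, hence to have nonempty interior. I do not expect a genuine obstacle here: the argument is a clean syntactic use of the covering inclusion combined with the uniform-in-$x$ convergence from Lemma \ref{lem444}. The only subtle point is the selection step, where one should observe that any $y\in\overline{B}$ actually lies in $f_{\omega_1}(B)$ (not merely in $f_{\omega_1}(\overline{B})$), which is exactly what (\ref{e33}) supplies and what makes the induction go through while keeping each $y_n$ inside $B$.
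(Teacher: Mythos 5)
Your proof is correct and follows essentially the same route as the paper: unwind the covering property inductively to produce a word $\omega\in\Omega^+$ with $y\in f_{\omega_1}\circ\cdots\circ f_{\omega_n}(B)$ for all $n$, then use weak hyperbolicity together with Lemma \ref{lem444} to identify $y$ with $\Gamma(\omega)\in K$. The only cosmetic difference is that you treat points of $\overline{B}$ directly (tracking explicit preimages $y_n\in B$), whereas the paper runs the induction for $x\in B$ and passes to $\overline{B}\subset K$ at the end by compactness of $K$; both are valid.
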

\begin{proof}
First, we show that for each $x \in B$, there exists a sequence $(\omega_n)_{n \geq 1}$ of the alphabets $\{1, 2, \ldots, k\}$ so that
\begin{equation*}\label{34}
  x=\lim_{n \to + \infty}f_{\omega_1}\circ \ldots \circ f_{\omega_n}(y), \textnormal{for \ all} \ y \in B.
\end{equation*}
Indeed, we define the sequence $(\omega_n)_{n \geq 1}$ inductively: assume that we have found $\omega_1, \ldots, \omega_n \in \{1, 2, \ldots, k\}$ so that $x \in f_{\omega_1}\circ \ldots \circ f_{\omega_n}(B)$. Then the covering property (\ref{e33}) implies that
$$ x \in f_{\omega_1}\circ \ldots \circ f_{\omega_n}(B) \subset \bigcup_{i=1}^k f_{\omega_1}\circ \ldots \circ f_{\omega_n} \circ f_i(B)$$
and therefore we can find $\omega_{n+1}$ such that $x \in f_{\omega_1}\circ \ldots \circ f_{\omega_n}\circ f_{\omega_{n+1}}(B).$ Thus, we have constructed a sequence $\omega= (\omega_1, \omega_2, \ldots, \omega_n, \ldots) \in \Omega^+$ so that $$x \in \bigcap_{n \geq 1} f_{\omega_1}\circ \ldots \circ f_{\omega_n}(B).$$

Since IFS$(\mathcal{F})$ is weakly hyperbolic, $\textnormal{diam}(f_{\omega_1}\circ \ldots \circ f_{\omega_n}(B))$ tends to zero, whenever $n \to \infty$. This fact with together Lemma \ref{lem444} ensure that
$$x=\lim_{n \to + \infty}f_{\omega_1}\circ \ldots \circ f_{\omega_n}(B)=\lim_{n \to + \infty}f_{\omega_1}\circ \ldots \circ f_{\omega_n}(X), $$
and hence $x \in K$ by (\ref{e044}).
 In particular, by compactness of $K$, $\overline{B} \subset \overline{K}=K$.
\end{proof}
\subsection{\bf{First Step: Construction of a Weak Contractive Map and Cusp-like Region}}
In this section, we will discuss weak contractive maps defined on $\mathbb{R}^2$. For a certain class of these maps we
provide a bounded distortion property on a cusp-like region of the plane. First, we establish notations and provide some background information.

Consider a map $f : X \longrightarrow X$, where $X$ is a metric space.
We say that $f$ is \emph{weak contractive} (or \emph{non-expansive} \cite{E}) whenever for each $x,y \in X$, with $x\neq y$, $d(f(x),f(y)) < d(x,y)$.
It is a well-known fact \cite[Coro.~3]{J} (see also \cite{Bi}) that if $f$ is weak contractive on a compact metric space $X$ then there exists a unique fixed point
$x \in X$ of the map $f$. Furthermore, for every $y \in X$, $\lim_{k \to \infty}f^k(y)=x$ uniformly.
Then we say that $x$ is a \emph{weak attracting fixed point}.

Clearly if $f$ is a weak contractive map then
$$d(f^{n}(y), f^{n}(z)) \to 0, \ as, \ n \to \infty,$$
for each $y, z \in X$.


In the following, we assume that $f$ is a weak contractive $C^2$-diffeomorphism on $\mathbb{R}^2$.
Let $X$ be a closed ball of $\mathbb{R}^2$ so that $f(X) \subset X$. So we will focus on the dynamic of $f$ on the compact ball $X$. Clearly, $f$ is a weak contractive map on $X$.
\begin{definition}\label{def21}
We say that $x$ is a $\ast$-weak attracting fixed point of $f$ whenever

 $(1)$ $f$ is weakly contractive with weak attracting fixed point $x \in X$;

 $(2)$ $Df(x)$ has eigenvalues with different moduli.

$(3)$ $Df(x)$ possesses $1$ as an eigenvalue
and the other eigenvalue has modulus less than $1$.
\end{definition}
Now we fix any weak contractive $C^2$-diffeomorphism on a compact ball $X$ that admits a $\ast$-weak attracting fixed point $x \in X$.
Clearly $x$ is a non-hyperbolic fixed point of $f$.

In below, we illustrate the dynamical properties of $f$ at a neighborhood of $x$.
We will use the approach
proposed in \cite{ABV} to provide a bounded distortion property for the iterates of $f$
on a cusp-like region which is contained in a neighborhood of the point $x$.

According to assumptions $(2)$ and $(3)$ of the definition, $Df(x)$ admits a splitting $T_{x}(M)=E^s \oplus E^c$ with the following properties: there exists $0<\lambda<1$ that satisfies
\begin{equation}\label{e1}
\|Df(x)|_{E^s}\| \leq \lambda, \ \ \|Df(x)|_{E^s}\|. \|Df^{-1}(x)|_{E^c}\| \leq \lambda.
\end{equation}
We extend the subbundles $E^s$ and $E^c$ continuously to some neighborhood $V \subset X$ of $x$ and we denote by $\mathcal{E}^s$ and $\mathcal{E}^c$. Here, we do not require these extensions to be invariant under $Df$. For each $0<\beta<1$,  the \emph{center cone field} $\mathcal{C}_\beta ^c:= (\mathcal{C}_\beta ^c(y))_{y \in V}$ of width $\beta$ efined by
\begin{equation*}
\mathcal{C}_\beta ^c(y)=\{v_1 +v_2 \in \mathcal{E}^s_y \oplus \mathcal{E}^c_y : \|v_1\| \leq \beta \|v_2\|\}.
\end{equation*}
The \emph{stable cone field} $\mathcal{C}_\beta ^s:= (\mathcal{C}_\beta ^s(y))_{y \in V}$ of width $\beta$ is defined in a similar way. Fix $\beta>0$ and $V$ small enough so that up to increasing $\lambda<1$, the second inequality of (\ref{e1}) remains valid for any pair of vectors in the two cone fields:
\begin{equation*}\label{e3}
\|Df(y)v^s\|. \|Df^{-1}(f(y))v^c\| \leq \lambda \|v^s\|.\|v^c\|,
\end{equation*}
for every $v^s \in \mathcal{C}_\beta ^s(y), \ v^c \in \mathcal{C}_\beta ^c(y)$, and each point $y \in V \cap f^{-1}(V)$.
Then, the center cone field is positively invariant, that is
$Df(y)\mathcal{C}_\beta ^c(y)\subset \mathcal{C}_\beta ^c(f(y))$, provided that $y$ and $f(y)$ contained in $V$.
Indeed, according to (\ref{e1})
$$Df(x)\mathcal{C}_\beta ^c(x)\subset \mathcal{C}_{\lambda \beta} ^c(x) \subset \mathcal{C}_\beta ^c(x),$$
and this extends to each $y \in V \cap f^{-1}(V)$, by continuity.

We recall the notion of H\"{o}lder variation of the tangent bundle in local coordinates, as follows. Let $I \subset V$ be a $C^2$ embedded arc of $X$ which is tangent to the center cone field $\mathcal{C}_\beta ^c$ and take the exponential map on the embedded submanifold $I$. Suppose that $r>0$ is small enough so that the inverse of the exponential map $exp_x$ is defined on the $B_{r}(x,I):=B_{r}(x) \cap I$, where $B_{r}(x)$ is the ball with the radius $r$ and the center $x$ in $V$. We identify the neighborhood $B_{r}(x,I)$ of $x$ in $I$ with the corresponding neighborhood $U_x$ of the origin in $T_xI$, through the local chart defined by $exp^{-1}_x$. Then $T_yI$ is parallel to the graph of a unique
linear map
$$\mathcal{L}_x(y) : T_xI \to \mathcal{E}_x^s,$$
for more details see Section 2.1 of \cite{ABV}.
For given constants $C> 0$ and $0< \alpha \leq 1$, we say that the tangent bundle of $I$ is $(C, \alpha)$-{\it H\"{o}lder} if
\begin{equation*}
\|\mathcal{L}_x(y)\| \leq C \rho(x, y)^{\alpha} \, \textnormal{for} \ \textnormal{every} \ y \in I \cap U_x, \ x \in V,
\label{e1
}
\end{equation*}
where$\rho(x, y)$ denotes the distance from $x$ to $y$ along $I \cap U_x$.

The next result provide a bounded distortion property for iterates of $f$ over a cusp-like region that is contained in $V$.
 \begin{theorem}\label{thmcusp}
Let $f$ be a weak contractive map on $X$ with the $\ast$-weak attracting fixed point $x_0$.
Then there exist an open neighborhood $V$ containing $x_0$ a cusp-like region $W \subset V$ for which the following holds:

$(i)$ $W=\bigcup_{k=1}^\infty W_k,$ where $W_1$ is a closed rectangle in $V$ and $f(W_{k-1})=W_k$,
 for each $k \in \mathbb{N}$;

 $(ii)$ the functions
\begin{equation*}
J_{k} : W_k \ni y \mapsto \log |\textnormal{det}(Df|_{T_y(W_k)})|,
\end{equation*}
are $(L, \alpha)$-H\"{o}lder continuous, for some constant $L > 0$ and $\alpha =1$. In particular, $diam(W_k)$ tends to zero with a uniform rate whenever $k \to + \infty$.
\end{theorem}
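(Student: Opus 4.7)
The plan is to construct $W_1$ as a small closed rectangle near $x_0$ aligned with the invariant splitting $E^c\oplus E^s$, to set $W_k:=f^{k-1}(W_1)$, and then to exploit the cone invariance coming from (\ref{e1}) together with the $C^2$ smoothness of $f$ to control both the geometry of $W_k$ and the distortion of $Df$ on it. First I fix a neighborhood $V$ of $x_0$ on which (\ref{e1}) holds and the center cone $\mathcal{C}_\beta^c$ is positively invariant under $Df$. Since $f$ is weak contractive on the compact ball $X$ with unique weak attracting fixed point $x_0$, the quoted result on weak contractions gives $f^n\to x_0$ \emph{uniformly} on $X$. I then take $W_1$ to be a closed rectangle $[-\epsilon,\epsilon]\times[-\delta,\delta]$ in local coordinates adapted to $\mathcal{E}_{x_0}^c\oplus\mathcal{E}_{x_0}^s$, with $\delta\ll\epsilon$ and both small enough that uniform convergence guarantees $f^n(W_1)\subset V$ for every $n\ge 0$. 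This already yields item $(i)$ with $W_k:=f^{k-1}(W_1)$.

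Next I establish the cusp shape and the uniform diameter decay. By induction using positive invariance of $\mathcal{C}_\beta^c$, tangent directions to the ``horizontal'' boundary curves of $W_k$ lie in $\mathcal{C}_\beta^c$ at every point of $W_k\subset V$, while the dominated splitting inequality in (\ref{e1}) applied to stable vectors shows that the stable width of $W_k$ contracts by at least a factor $\lambda$ per step and hence is bounded by $C\lambda^{k-1}$. Along the center direction the eigenvalue is $1$, so no linear contraction is available; instead the weak contractive property of $f$ combined with the uniform convergence $f^n|_{W_1}\to x_0$ forces the length of $W_k$ in the center direction to shrink to $0$ uniformly. Hence $\mathrm{diam}(W_k)\to 0$ with a rate that is uniform on $W_1$, and $W=\bigcup_k W_k$ has the announced cusp-like shape.

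For item $(ii)$, observe that $W_k$ is a two-dimensional curvilinear rectangle in $\mathbb{R}^2$, so $T_yW_k=\mathbb{R}^2$ at every interior point and therefore $\det(Df|_{T_yW_k})=\det Df(y)$. Because $f$ is $C^2$, the map $y\mapsto\log|\det Df(y)|$ is $C^1$ on the compact closure $\overline V$ and hence Lipschitz with some constant $L$ depending only on $\|f\|_{C^2(\overline V)}$. Since every $W_k$ lies in $V$, the same constant $L$ serves uniformly in $k$, yielding
\[
|J_k(y_1)-J_k(y_2)|\le L\,\|y_1-y_2\|,\qquad y_1,y_2\in W_k,
\]
which is the desired $(L,1)$-H\"older estimate.

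The main obstacle is therefore not this last routine step but the construction itself: guaranteeing that \emph{all} iterates $W_k$ stay inside the single neighborhood $V$ on which the cone estimates (\ref{e1}) and the uniform $C^2$ bounds are valid. Because the center eigenvalue equals $1$, there is no linear trapping mechanism, and one must rely entirely on the global weak contractive hypothesis, via the uniform convergence $f^n\to x_0$, to keep the forward orbit of $W_1$ from drifting out of $V$. This qualitative-but-uniform control in the center direction replaces the missing hyperbolicity and underpins both the invariance of the cone fields along the orbit and the uniformity of the constant $L$.
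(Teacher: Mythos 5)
Your treatment of item (ii) rests on reading $T_y(W_k)$ as the full tangent space $\mathbb{R}^2$, so that $J_k(y)=\log|\det Df(y)|$ and the H\"older bound is an immediate consequence of $f$ being a $C^2$ diffeomorphism on a compact set. Under that reading item (ii) has essentially no content (it holds on all of $X$, with no need for the cusp, the cones, or the $\ast$-weak fixed point), and it is not what the paper proves nor what it later uses. In the paper's proof $W_k$ is swept out by the curves $f^{k-1}(I_x)$, where $\{I_x\}_{x\in D}$ is a family of $C^2$ arcs tangent to the center cone field $\mathcal{C}_\beta^c$ joining each $x\in D$ to $f(x)$, and the functions whose H\"older continuity is established are $y\mapsto\log|\det(Df|_{T_y f^k(I_x)})|$, i.e.\ the Jacobian restricted to the one-dimensional tangent directions of these center curves. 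That is a genuinely nontrivial distortion statement: the direction $T_y f^k(I_x)$ varies with $y$, and the uniform-in-$k$ constant $L$ comes from controlling the H\"older variation (the curvature $\kappa(f^k(I_x))$) of the tangent bundles of the iterated curves via the domination inequality (\ref{e5}) together with Proposition 2.2 and Corollary 2.4 of \cite{ABV}. Your argument never controls this variation, so it does not yield the bounded distortion along the $E^c$-direction; that is precisely the property invoked in the second step (to guarantee that $f(B)\cup\widetilde{f}(B)$ is connected and its extent along $E^c$ exceeds the major axis of $B$), so the omission is substantive rather than a matter of wording.

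A secondary divergence is the construction of $W_1$. The paper takes a small arc $D$ transverse to the $x_2$-axis and lets $W_1$ be the curvilinear rectangle bounded by two of the arcs $I_x$, each of which joins $x$ to $f(x)$; this forces consecutive pieces $W_k=f(W_{k-1})$ to abut, so that $W=\bigcup_k W_k$ is a connected cusp terminating at $x_0$, which is the geometry used afterwards. Your rectangle adapted to $E^s\oplus E^c$ near $x_0$ does satisfy the literal item (i) (and the trapping issue you worry about is even easier than you suggest: non-expansiveness gives $d(f^n(y),x_0)\le d(y,x_0)$ for all $n$, so all iterates stay in any ball around $x_0$ containing $W_1$), and your diameter-decay argument via uniform convergence of $f^n$ to $x_0$ is correct; but the union of iterates of your rectangle need not be connected or cusp-shaped. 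In summary, item (i) and the diameter statement are fine, while for item (ii) you have proved only the trivialized full-Jacobian version; the restricted-Jacobian H\"older estimate along center curves, which is the actual content of the theorem and the part that carries weight in the rest of the paper, is missing from your proposal.
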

\begin{proof}
First suppose that $I \subset V$ is a $C^2$ embedded arc of $X$ which is tangent to the center cone field $\mathcal{C}_\beta ^c$,
this means that the tangent subspace to $I$ at each point $x \in I$ is contained in the cone $\mathcal{C}_\beta ^c(x)$.
Then $f(I)$ is also tangent to the center cone field, since it is contained in $V$.
 We claim that the tangent bundle of the iterates of the $C^2$-submanifold $I$,
i.e. $f^n(I)$, $n \in \mathbb{N}$, are H\"{o}lder continuous
with uniform H\"{o}lder constant. Indeed by domination property (\ref{e1}) and the choice of $V$, there exist $\gamma \in (\lambda, 1)$
such that
\begin{equation}\label{e5}
\|Df(z)v^s\|. \|Df^{-1}(f(z))v^c\|^{1+ \alpha} \leq \gamma < 1,
\end{equation}
for each unit vectors $v^s \in \mathcal{C}_{\beta}^s(z)$, $v^c \in \mathcal{C}_{\beta}^c(z)$ and $z \in V$.
Now, by reducing $r$ and increasing $\gamma < 1$, (\ref{e5}) remains true if we replace $z$ by any $y \in U_x$, $x \in V$.

The curvature of $I$ is defined by
$$\kappa(I):=\inf \{C> 0: \textnormal{the \ tangent \ bundle \ of} \ I \ \textnormal{is} \ (C, \alpha)-\textnormal{Holder}\}.$$
Note that since $x_0$ is an $\ast$-weak attracting fixed point of $f$, $f^n(I)\subset V$, for all $n \geq 1$. So by Proposition 2.2 and Corollary 2.4 of \cite{ABV},
there exists $C_1 > 0$ for which the following statements hold:
\begin{itemize}
\item [(a)] there is an integer $n_0 \geq 1$ such that $\kappa (f^n(I)) \leq C_1$, for each $n \geq n_0$;
\item [(b)] if $\kappa(I) \leq C_1$, then $\kappa (f^n(I)) \leq C_1$;
\item [(c)] if $\kappa(I) \leq C_1$ then for each $k$ the function
\begin{equation*}\label{e6}
J_k : f^k(I) \ni y \mapsto \log |\textnormal{det}(Df|_{T_y(f^k(I))})|,
\end{equation*}
is $(L, \varepsilon)$-H\"{o}lder continuous with $L > 0$ depends only on $C_1$ and $f$.
\end{itemize}
Now, we consider the following form of domination
\begin{equation*}\label{e7}
\|Df(x)|_{E^s}\|. \|Df^{-1}(x)|_{E^c}\|^i \leq \lambda,
\end{equation*}
for $i=1, 2$. Since $f$ is $C^2$ then we may take $\alpha =1$ in the above argument.
In particular, since $I$ is a $C^2$ curve and by $(b)$, the curvature of all iterates $f^n(I)$, $n \geq 1$, is bounded by some constant that depends only on the curvature of $I$. We assume that the origin is the $\ast$-weak attracting fixed point of $f$ and
the one dimensional center bundle $E^c$ coincide with the $x_2$-axis. Take an arc $D$ so that it is orthogonal to the $x_2$-axis. Also, we assume that the distance between the arc $D$ and the origin is sufficiently small.
We fix a constant $C_1$ for which the statements $(a)$, $(b)$ and $(c)$ hold and then
we consider a family of $C^2$-embedded arcs $\{ I_x : x \in D \}$ such that each $I_x$
intersects the arc $D$ transversally at the point $x$ and connect the point $x$ to $f(x)$.
Moreover, by taking $D$ small enough, we may choose these arcs so that
for each $x \in D$, $I_x\subset V$, it is tangent to the center cone field $\mathcal{C}_\beta ^c$ with
$\kappa(I_x) \leq C_1$, and also they vary continuously with the base point $x$ in the $C^2$ topology, see figure \ref{fi:1}.
Since $f$ is $C^2$ and by Statement $(c)$, for each $x \in D$, the functions
\begin{equation*}
J_{k, x} : f^k(I_x) \ni y \mapsto \log |\textnormal{det}(Df|_{T_y(f^k(I_x))})|,
\end{equation*}
are $(L, \varepsilon)$-H\"{o}lder continuous with $L > 0$ depends only on $C_1$ and $f$ and $\varepsilon =1$. Now, we take a closed rectangle $W_1\subset \bigcup _{x \in D}I_x$ so that its vertical edges are $I_{x_1}$ and $I_{x_2}$, for some points $x_1, x_2 \in D$, and we define $W_k$, $k \in \mathbb{N}$, inductively by $W_k=f(W_{k-1})$. Also we take
$W= \bigcup _{k=1}^\infty W_k$. It is not hard to see that $W$ and $W_k$,  $k \in \mathbb{N}$, satisfying the conclusion of the theorem.
\begin{figure}[h]
\begin{center}
\includegraphics[scale=1.3]{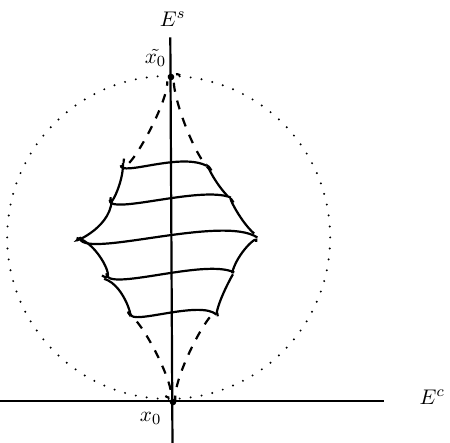}
\caption{Cusp-like Region}\label{fig:1}
\end{center}
\end{figure}
\end{proof}
\subsection{Second Step: a Non Hyperbolic IFS with a Non Degenerate Attractor}
As in the first step, let $X$ be an open ball in $\mathbb{R}^2$. We recall that the \emph{Lyapunov exponents} of an IFS$(\mathcal{F})$ on $X$ are numbers that occur as limits
$$\lim_{n \to \infty} \frac{1}{n}\textnormal{log}\|Df_\omega^n(x)v\|$$
for $x \in X$, nonzero vectors $v \in T_x(X)$ and $\omega \in \Omega^+$. For an ergodic stationary measure $\mu$, one has that
$\mathbb{P}^+ \times \mu$ almost everywhere there are Lyapunov exponents $\lambda_1 \geq \lambda_2$ , not
depending on $x$ or $\omega$. Note that the \emph{top Lyapunov exponent} $\lambda_1$
is computed as
\begin{equation}\label{e22}
\lambda_1=\lim_{n \to \infty} \frac{1}{n}\textnormal{log}\|Df_\omega^n(x)\|.
\end{equation}
Now we state the main result of this section.
\begin{theorem}\label{thm1}
There exists a weakly hyperbolic (non-hyperbolic) iterated function system $\textnormal{IFS}(\mathcal{F})$ generated by a finite number of weakly contractive diffeomorphisms on $X$ that admits a unique global attractor $K$ for which the following holds:
\begin{itemize}
  \item [(1)] $K$ has nonempty interior;
  \item [(2)] $\textnormal{IFS}(\mathcal{F})$ admits an ergodic measure $\mu$ with $\textnormal{supp}(\mu)=K$;
  \item [(3)] the top Lyapunov exponent of $\textnormal{IFS}(\mathcal{F})$ is negative.
\end{itemize}
\end{theorem}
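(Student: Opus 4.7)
The plan is to exhibit an explicit family $\mathcal{F} = \{f_1, f_2, \ldots, f_k\}$ on $X$ assembled so that all three conclusions follow from results already established. For $f_1$ I take a weak contractive $C^2$-diffeomorphism with a $\ast$-weak attracting fixed point and associated cusp-like region $W$ supplied by Theorem \ref{thmcusp}; note that the Lipschitz constant $C_1$ of $f_1$ on $X$ equals $1$ while the iterates $f_1^n(X)$ collapse to the fixed point. For $f_2, \ldots, f_k$ I take uniform contractions with Lipschitz constants $0 < C_i < 1$ (for instance affine), whose images are positioned so that the family satisfies the covering property
\begin{equation*}
\overline{B} \subset \bigcup_{i=1}^k f_i(B)
\end{equation*}
for some open set $B \subset X$. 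Finally I fix probabilities $p_1, \ldots, p_k > 0$ with $p_1$ sufficiently small that $\sum_{i=1}^k p_i \log C_i < 0$, which is possible because $\log C_1 = 0$ and $\log C_i < 0$ for $i \geq 2$; this makes $\textnormal{IFS}(\mathcal{F})$ contractive on average.

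The first technical step is to verify weak hyperbolicity. Fix $\omega \in \Omega^+$ and put $D_n = \textnormal{diam}\bigl(f_{\omega_1} \circ \cdots \circ f_{\omega_n}(X)\bigr)$. If some $i \in \{2, \ldots, k\}$ appears infinitely often in $\omega$, then every occurrence of $f_i$ shrinks the diameter by a factor $\le C_i < 1$ while the intermediate $f_1$-steps are non-expansive, so $D_n \to 0$. Otherwise $\omega$ uses only $f_1$ after some index $N$, and by the classical fact for weak contractive maps on compact metric spaces recalled before Definition \ref{def21}, $f_1^{n-N}$ converges uniformly to the constant map at the fixed point, so again $D_n \to 0$.

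The three conclusions of the theorem now follow from earlier results. Lemma \ref{lem444} together with the Remark after it furnish a unique global point-fibred attractor $K = \Gamma(\Omega^+)$, and the covering-property lemma of Subsection 2.2 then gives $\overline{B} \subset K$, so $K$ has nonempty interior, which is (1). The Corollary stated just after Proposition \ref{mainprop}, applied to our weakly hyperbolic, contractive-on-average, point-fibred IFS, yields an ergodic invariant measure $\mu$ with $\textnormal{supp}(\mu) = K$, which is (2). For (3) the chain rule and submultiplicativity of the norm give
\begin{equation*}
\tfrac{1}{n} \log \|Df_\omega^n(x)\| \;\leq\; \tfrac{1}{n} \sum_{j=1}^n \log C_{\omega_j},
\end{equation*}
and Birkhoff's ergodic theorem applied to the shift $(\Omega^+, \sigma, \mathbb{P}^+)$ makes the right-hand side converge $\mathbb{P}^+$-almost surely to $\sum_{i=1}^k p_i \log C_i < 0$; by the definition (\ref{e22}) of the top Lyapunov exponent, this gives $\lambda_1 < 0$.

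The hard part is to realize all three requirements simultaneously: the covering property wants the images $f_i(B)$ to be geometrically large, while the contractive-on-average condition demands small $C_i$ for $i \geq 2$, and weak hyperbolicity along $f_1$-only sequences requires control of the dynamics near the degenerate fixed point. The cusp-like region $W$ of Theorem \ref{thmcusp} is the key geometric tool: it confines $f_1$-orbits with the bounded Jacobian distortion (ii), allowing the contractions $f_2, \ldots, f_k$ to be placed with images complementing $f_1(B)$ so as to realize the covering property while preserving $\sum p_i \log C_i < 0$.
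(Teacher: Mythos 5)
Your argument is sound and does prove the statement, but it follows a genuinely different route from the paper. The paper builds the family from a mirrored pair of weak contractions $f$ and $\widetilde{f}=T\circ f\circ T^{-1}$ together with rotated copies $R_\beta^{n_i}\circ g$ of a hyperbolic contraction $g$ that coincides with $f$ outside a small neighborhood of the fixed point (formula (\ref{e44})); the covering property (\ref{e24}) is obtained geometrically from the cusp-like regions and the bounded-distortion estimate of Theorem \ref{thmcusp}, by placing an ellipse-like region $B\subset W\cup\widetilde{W}$ and rotating $f(B)\cup\widetilde{f}(B)$, and weak hyperbolicity of the subsystem $\{f,\widetilde{f}\}$ is imported from Proposition 2.5 of \cite{E}. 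You instead keep a single weak contraction and let auxiliary uniform contractions do all the covering, which makes your weak-hyperbolicity dichotomy and the contraction-on-average condition immediate; and for conclusion (3) you replace the paper's argument (point-fibredness, full measure of disjunctive sequences, $x(\omega)\in\mathrm{int}(K)$, contraction on average) by the elementary bound $\|Df_\omega^n(x)\|\le\prod_j C_{\omega_j}$ plus Birkhoff's theorem, which is cleaner and arguably more airtight than what the paper writes. What your route gives up is compatibility with the rest of the paper: the third step builds the random IFS $\Psi$ and the skew product over $\varphi(t)=kt$ precisely from the generators (\ref{e44}) (their placement at the parameters $t_i=n_i\beta$, the mirror symmetry, $g$ a small perturbation of $f$), so the elaborate geometry is not gratuitous there, whereas your family proves Theorem \ref{thm1} only as an isolated existence statement. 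Two small caveats: the existence of uniform contractions of $X$ into itself whose images of some open set $B$ cover $\overline{B}$ is asserted rather than constructed (it is standard, but a line with explicit affine maps would close it), and your final paragraph overstates the role of the cusp-like region—in your construction there is no tension between the covering property and contraction on average, since $\log C_i<0$ for $i\ge 2$ makes $\sum_i p_i\log C_i<0$ automatic for any positive weights, so the distortion estimate of Theorem \ref{thmcusp} is not actually needed anywhere in your proof.
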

The rest of this subsection is devoted to prove of the above theorem. Without loss of generality, we may assume that $X$ is a compact ball of $\mathbb{R}^2$ containing the origin and $f:X \to X$, introduced in he first step, is a weak contractive map for which the point $x_0$ is an $\ast$-weak attracting fixed point. Therefore, $f(x_0)=x_0$ and $Df(x_0)$ possesses 1 as an eigenvalue and the other eigenvalue $\lambda$
satisfies $0<\lambda<1$. Consider the arc $D$ applied in the proof of Theorem \ref{thmcusp} which is orthogonal to the central subbundle  $E^c$ ($x_2$-axis).
 Let $z$ be the intersection point of $D$ with the central subbundle  $E^c$ and $W$ be the cusp-like region of $X$ provided by Theorem \ref{thmcusp}. Suppose that $\alpha$ is the distance between the points $z$ and $x_0$. We take $S$ the circle with the radius $\alpha$ and the center $z$.
From now on, by a smooth change of coordinates, we may assume that the circle $S$ is the standard unit circle $S^1$.
Assume that $R_\beta$ is an irrational rotation defined by $R_\beta(r,\theta)=(r,\theta+\beta)$
in polar coordinate and $T:= R_{\pi/2}$. Take $\widetilde{f}:= T \circ f \circ T^{-1}$ and assume that $\widetilde{W}:= \widetilde{f}(W)$ is the mirror image of the cusp-like region $W$ by the reflecting map $\widetilde{f}$. Put $A:=W \cup \widetilde{W}$.  Suppose that $B$ is an ellipse-like region
so that the major axis lies on the central bundle $E^c$ and
$\overline{B} \subset A$. Also, suppose that $B$ satisfies $$int(B)\cap (A \setminus(f^{-1}(W_1)\cup W)\neq \emptyset \
\textnormal{and} \ int(B)\cap (A \setminus (\widetilde{f}^{-1}(\widetilde{W}_1)\cup \widetilde{W})\neq \emptyset,$$ see figure \ref{fig:2}, below.
\begin{figure}[h]
\begin{center}
\includegraphics[scale=1.3]{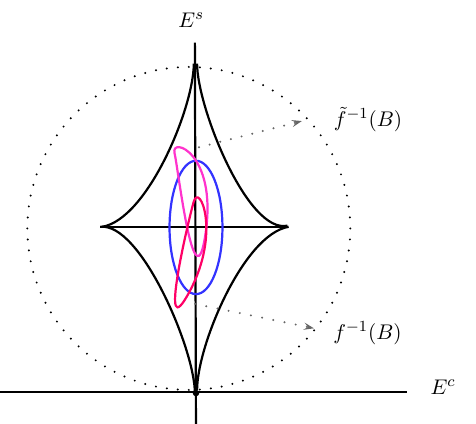}
\caption{Cusp-like Region}\label{fig:2}
\end{center}
\end{figure}
Now,  the choice of $B$ and the bounded distortion property  provided by Theorem \ref{thmcusp} ensure that $f(B) \cup \widetilde{f}(B)$ is connected and
the diameter of $f(B) \cup \widetilde{f}(B)$ along the $E^c$-direction is greater than the length of the major axis of $B$.
 Putting $\widetilde{B}:=f(B) \cup \widetilde{f}(B)$, it is not hard to see that there exist positive integers $n_1, \ldots, n_m $
in such a way that for $B_i:= R_\beta^{n_i}(\widetilde{B})$, $i=1, \ldots, m$, one has
\begin{equation}\label{e233}
\overline{B} \subset B_1 \cup \ldots \cup B_m.
\end{equation}
Now, take a hyperbolic contracting $C^2$ diffeomorphism $g$ defined on $X$ so that it possesses a unique attracting fixed point at the point $x_0$ and satisfies the following properties:
\begin{itemize}
  \item $Dg(x_0)$ has two real eigenvalues $\lambda_1$ and $\lambda_2$, with $0<\lambda_1<\lambda_2<1$, such that $\lambda_2$ is close enough to 1 and $\lambda_1=\lambda$, where $\lambda$ is the smallest eigenvalue of $Df(x_0)$.
  \item $g=f$ outside an arbitrary small neighborhood of $x_0$.
 \end{itemize}
 We take
\begin{equation}\label{e44}
f_1:=f, \ f_i:= R_\beta^{n_i} \circ g, \ \textnormal{for \ each} \ i=2, \ldots, m, \ f_{m+1}:=\widetilde{f} \ \textnormal{and} \
f_{i+m}:= T \circ f_i, \ i=2, \ldots, m.
\end{equation}
We show that the iterated function system IFS($\mathcal{F}$) generated by the maps $\{f_1, \ldots, f_{2m}\}$ defined in (\ref{e44})
satisfies the conclusion of the Theorem \ref{thm1}.
\begin{lemma}\label{lem22}
Let IFS($\mathcal{F}$) be the iterated function system generated by the maps $\{f_1, \ldots, f_{2m}\}$ defined in $\textnormal{(\ref{e44})}$. Then IFS($\mathcal{F}$) is weakly hyperbolic.
\end{lemma}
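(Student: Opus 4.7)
Fix $\omega=(\omega_1,\omega_2,\ldots)\in\Omega^+$ and set $F_n:=f_{\omega_1}\circ\cdots\circ f_{\omega_n}$. Every $f_i$ sends $X$ into itself, so $\{F_n(X)\}_{n\ge 1}$ is a decreasing sequence of nonempty compact subsets of $X$; moreover each $F_n$ is a composition of weak contractions, hence weak contractive, so $\mathrm{diam}(F_n(X))$ is non-increasing in $n$. The plan is to prove $\mathrm{diam}(F_n(X))\to 0$ by splitting the alphabet $\{1,\ldots,2m\}$ into the \emph{hyperbolic} block $J=\{2,\ldots,m,m+2,\ldots,2m\}$, on which each $f_i$ involves the uniform contraction $g$, and the \emph{weak} block $J^{c}=\{1,m+1\}$ corresponding to $f$ and $\widetilde f$.

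If $\omega_k\in J$ for infinitely many $k$, the argument is geometric. Since $g$ is a hyperbolic contraction (both eigenvalues of $Dg(x_0)$ have modulus $<1$, and outside a small neighbourhood of $x_0$ it coincides with the strictly weakly contractive $f$, so by compactness of $X$ one obtains $\sup_X\|Dg\|\le\eta<1$) and since $R_\beta,T$ are isometries, every $f_i$ with $i\in J$ is $\eta$-Lipschitz. Hence
\[
\mathrm{diam}(F_n(X))\le\eta^{N(n)}\,\mathrm{diam}(X),\qquad N(n):=\#\{k\le n:\omega_k\in J\},
\]
and $N(n)\to\infty$ forces $\mathrm{diam}(F_n(X))\to 0$. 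Otherwise, after discarding a finite prefix, $\omega\in\{1,m+1\}^{\mathbb N}$. If $\omega$ is eventually constant equal to $1$, then $F_n=F_N\circ f^{n-N}$, and Edelstein's theorem applied to the weak contraction $f$ on the compact set $X$ gives $f^k(X)\to\{x_0\}$ uniformly; the case eventually $m+1$ is symmetric.

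The remaining sub-case, where both symbols $1$ and $m+1$ occur infinitely often, is the main obstacle: since $f$ and $\widetilde f$ have distinct fixed points $x_0\ne T(x_0)$, Edelstein's theorem does not apply to the varying composition, and weak contractivity of each $F_n$ alone does not force the diameters to zero. My plan here is to exploit the orthogonality of the centre directions of $f$ and $\widetilde f$: at $x_0$ the weak direction of $f$ is $E^c$ (the $x_2$-axis) and the stable direction $E^s$ contracts at rate $\lambda<1$, while at $T(x_0)$, because $\widetilde f=T\circ f\circ T^{-1}$ with $T=R_{\pi/2}$, the weak and stable directions are the swapped axes. Using the invariant cone fields and the bounded-distortion estimate from Theorem~\ref{thmcusp}, I would show that each application of $f$ uniformly contracts the horizontal component of tangent vectors, while each application of $\widetilde f$ uniformly contracts the vertical component, so the presence of infinitely many of each symbol shrinks $F_n(X)$ in both coordinate directions simultaneously. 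The technical cost lies in transporting the cone analysis between the coordinate systems centred at $x_0$ and at $T(x_0)$, and quantifying the combined contraction factor along an arbitrary alternating word; this is what I expect to absorb the bulk of the work.
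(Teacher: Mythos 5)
Your case analysis starts out parallel to the paper's and is fine as far as it goes: infinitely many symbols from the uniformly contracting block $J$ give geometric decay of $\mathrm{diam}(F_n(X))$, and a finite prefix can be discarded by non-expansiveness. But the decisive case --- a tail in $\{1,m+1\}^{\mathbb N}$ in which \emph{both} symbols occur infinitely often --- is exactly where the lemma has content, and your proposal does not prove it: you explicitly present only a plan (cone fields, transport between the coordinate frames at $x_0$ and $T(x_0)$, a combined contraction factor along an arbitrary alternating word) and defer ``the bulk of the work.'' That is a genuine gap, not a detail. The paper closes this case by a citation: since iterating $f$ (resp.\ $\widetilde f=T\circ f\circ T^{-1}$) collapses all of $X$ to $x_0$ (resp.\ to $\widetilde x_0=T(x_0)$) and the two maps have the same central bundle, the two-generator system $\mathrm{IFS}(X;f,\widetilde f)$ is weakly hyperbolic by Proposition 2.5 of \cite{E}; weak hyperbolicity of that sub-IFS, together with non-expansiveness of the finite prefix, handles every tail in $\{1,m+1\}^{\mathbb N}$ at once (your ``eventually constant'' subcases included). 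To make your route work you would either have to invoke such a result or actually carry out the two-map analysis, and as sketched it does not obviously close: the cone-field and bounded-distortion estimates of Theorem~\ref{thmcusp} are local to the neighborhood $V$ of the fixed point (and its mirror image), whereas the compositions $F_n$ act on all of $X$; the assertion that ``each application of $f$ uniformly contracts the horizontal component'' is justified only by the linearization at $x_0$, so the claimed simultaneous shrinking in both coordinate directions is unquantified precisely in the regime you need it.

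A smaller point: your parenthetical derivation of $\sup_X\|Dg\|\le\eta<1$ from compactness is not valid as stated --- strict weak contractivity of $f$ only yields $\|Df(x)\|\le 1$, and compactness does not upgrade a pointwise strict inequality between distances to a uniform derivative bound. The paper simply builds uniform contraction into the choice of $g$ (``hyperbolic contracting''), and its proof of Lemma~\ref{lem22} uses that the generators $f_i$, $i\neq 1,m+1$, are uniformly contracting; you should take this as part of the construction rather than attempt to deduce it.
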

\begin{proof}
Since $f_1=f$ and $f_{m+1}=\widetilde{f}$ are weak contractive maps, by the iterates of these two maps, the entire of $X$ collapses  to the fixed points $x_0$ and $\widetilde{x}_0$ of $f$ and $\widetilde{f}$, respectively. Note that the point $\widetilde{x}_0$ is the mirror image of $x_0$. Moreover, the central bundles of these two maps are the same. These facts imply that the iterated function system $\mathcal{F}(X; f, \widetilde{f})$ is a weakly hyperbolic IFS according to Proposition 2.5 of \cite{E}.

To prove the lemma, let $\omega=(\omega_1 \omega_2 \ldots \omega_n \ldots)\in \Omega^+$. If the set
$\{n \in \mathbb{N}: \omega_n \neq 1, m+1\}$ is infinite then clearly $\lim_{n \to \infty}\textnormal{diam}(f_{\omega_1} \circ \ldots \circ f_{\omega_n}(X))=0$,
since the mappings $f_i$, $i\neq 1, m+1$, are uniformly contracting on $X$.
Otherwise, if the above set is finite then it
has a maximum element $l$. Hence for all $n > l$, $\omega_n \in \{1, m+1\}$ and
therefore $$\textnormal{diam}(f_{\omega_1} \circ \ldots \circ f_{\omega_l}\circ \ldots \circ f_{\omega_n}(X))\leq \textnormal{diam}(f_{\omega_{l+1}} \circ \ldots \circ f_{\omega_n}(X)).$$
But $\textnormal{diam}(f_{\omega_{l+1}} \circ \ldots \circ f_{\omega_n}(X))$ tends to zero,
since for each $n > l$, $\omega_n \in \{1, m+1\}$ and the iterated function system $\mathcal{F}(X; f, \widetilde{f})$ is weakly hyperbolic. This fact
 implies that
 $$\textnormal{diam}(f_{\omega_1} \circ \ldots \circ f_{\omega_n}(X)) \to 0, \ \textnormal{as} \ n \to \infty.$$ Therefore, IFS($\mathcal{F}$) is weakly hyperbolic as we have claimed.
 \end{proof}
Since the generators $f_i$, $i=1, \ldots, 2m$, are weakly contractive, by \cite[Thm.~2.6]{Bi}, IFS$(\mathcal{F})$ possesses a unique global attractor $K$ (see also \cite[Thm.~A]{AJ}).

Let us note that the attractor of each weakly hyperbolic iterated function system satisfies the point fibred property, this means that $K$ is a point fibred global attractor. Moreover, the inclusion (\ref{e233}) implies the following covering property
\begin{equation}\label{e24}
 \overline{B} \subset f_1(B) \cup \ldots \cup f_{2m}(B).
\end{equation}
In view of the covering property, Lemma \ref{lem22} ensures that $\overline{B} \subset K$. In particular, the attractor $K$ has nonempty interior. On the other hand, the mappings $f_i$ are weak contractive and for $i \neq 1, m+1$, $f_i$ is uniformly contracting which ensure that IFS$(\mathcal{F})$ satisfies the contractive on average property. Now, Proposition \ref{mainprop}, implies the existence of an ergodic stationary measure $\mu$ whose support is $K$.

It remains to show that the top fiber Lyapunov exponent of IFS$(\mathcal{F})$ is negative.
Since $\mu$ is an ergodic stationary measure, one has that
$\mathbb{P}^+ \times \mu$ almost everywhere the top Lyapunov exponent $\lambda_1$ defined by the limit (\ref{e22}), not
depending on $x$ or $\omega$. So it suffices to prove that for $\mathbb{P}^+$-almost surely $\omega$ and all $x$, the limit (\ref{e22}) is negative. The attractor $K$ is a point-fibred attractor, hence for each $\omega$, we can associate a point $x(\omega) \in K$ for which
the sequence $f_{\omega_1}\circ \ldots \circ f_{\omega_n}(X)$ tends to it whenever $n \to \infty$. Since, $K$ has nonempty interior, by \cite{BLV}, $x(\omega) \in int(K)$, provided that $\omega$ is disjunctive.  On the other hand, by \cite[Section.~4]{BV},
the set of all disjunctive sequences forms a full measure subset of $\Omega^+$. This means that for $\mathbb{P}^+$-almost surely $\omega$,
the corresponding point $x(\omega)$ is contained in the interior of $K$. This fact and contractive on average property of IFS$(\mathcal{F})$ imply that for $\mathbb{P}^+$-almost surely $\omega$ and all $x$, the top Lyapunov exponent (\ref{e22}) is negative.
\subsection{Thirth Step: Constructing the Random Iterated Function Systems}
This subsection is devoted to study random iterated function systems acting on $S^1 \times X$ of the form
 \begin{equation*}
\Psi :S^1 \times X \to X, \  \Psi(t,x)=f_{t}(x),
\end{equation*}
where, as before, $X \subset \mathbb{R}^2$ is a closed topological ball containing the origin and the fiber maps $x\mapsto f_{t}$ are
$C^2$-diffeomorphisms defined on $X$. Moreover, the \emph{parameter space} is $S^1$. We say that $\Psi$ is a \emph{random iterated function system}.
Let $f_i$, $i=1, \ldots, 2m$, be the diffeomorphisms so that the
iterated function system IFS($\mathcal{F})$ generated by these
maps satisfies the conclusion of Theorem \ref{thm1} and let
$\varphi:S^1\to S^1$ be an expanding map
given by
$$\varphi(x)=kx~~(mod~1),$$
where $k=4m$. For irrational number $\beta$ chosen in the second
step, take points $t_1=0$ and $t_i=n_i \beta$ (mod 1), $i=2,
\ldots, m$, $t_{m+1}=\frac{1}{2}$ and $t_{m+i}=t_i+ \frac{1}{2}$
(mod 1), $i=2, \ldots, m$, of the unit interval [0,1]. By
renumerating the indices, we may assume that
\begin{equation}\label{e14}
0=t_1 < t_2 < \ldots < t_m < t_{m+1}= 1/2< t_{m+2} < \ldots < t_{2m}<t_{2m+1}=1.
\end{equation}
Let $\delta > 0$ be small enough
  and take a smooth function $\eta : [0,1] \to [0,\delta]$ satisfying
\begin{equation*}
\eta(t) :=\begin{cases}
\begin{array}{ccc}
\eta_1(t) & \text{for \ t $\in$ [0,$\delta $]},\\
\eta_2(t) & \text{ \ \ \ \ for \ t $\in$ [$\delta,\frac{1}{2}-\delta$]},\\
     \eta_3(t) & \text{ \ \ \ \ for \ t $\in$ [$\frac{1}{2}-\delta,\frac{1}{2}$]},\\
     \eta_4(t) & \text{ \ \ \ \ \ for \ t $\in$ [$\frac{1}{2},\frac{1}{2}+\delta$]},\\
      \eta_5(t) & \text{ \ \ \ \ \ \ \ \ for \ t $\in$ [$\frac{1}{2}+\delta,1- \delta$]},\\
       \eta_6(t) & \text{ \ \ \ \ for \ t $\in$ [$1- \delta,1$]},
\end{array}
  \end{cases}
  \label{e4}
  \end{equation*}
where $\eta_1$ is an increasing smooth function with $\eta_1(0)=0$, $\eta_2(t):=\delta$, $\eta_3(t)$ is a decreasing smooth function with $\eta_3(1/2)=0$, $\eta_4(t)$ is a increasing smooth function with $\eta_4(1/2)=0$,
$\eta_5(t):=\delta$ and $\eta_6(t)$ is a decreasing smooth function with $\eta_6(1)=0$. Now, assume that $\lambda^{\prime}$ is the largest eigenvalue of $Dg(0)$ and take $\delta=1- \lambda^{\prime}$.
Moreover, the points $t_i$ defined by (\ref{e14}) satisfy
\begin{equation}\label{e15}
 0 =t_1 < \delta <t_2 < \ldots < t_{m}< 1/2-\delta < t_{m+1}=1/2<1/2+\delta < t_{m+2} < \ldots < t_{2m}< t_{2m+1}=1.
\end{equation}
Assume that
$$A(t)= \left( {\begin{array}{*{20}{c}}
1-t&0\\
0&\lambda
\end{array}} \right),$$
where $0<\lambda < 1$ is the smallest eigenvalue of $Df(0)$. For each $t \in [0,1]$, let $R_t$ be the rotation on $S^1$ with the angle $2\pi t$. Now, we define a smooth arc of $C^2$ diffeomorphims on $X$ of the form
\begin{equation*}
 H: [0,1] \to \textnormal{Diff}^2(X), \ t \mapsto h_t
\end{equation*}
such that
\begin{itemize}
  \item [(1)] $\forall t \in [0,1], \ h_t$ is $C^2$ close to the identity and admits a unique fixed point at $x_t$;
  \item [(2)] $Dh_t(x_t)= R_t \circ A(\eta(t))$;
  \item [(3)] $h_{t}(X) \subset \textnormal{int}(X)$;
  \item [(4)] $\forall t\in [0,1] \ \forall x \neq x_t, \|Dh_t(x) \| < 1$;
  \item [(5)] $h_{t_i}=f_i$, $i=1, \ldots, 2m$ and $h_1=f_1$.
\end{itemize}
Let $I_i \subset S^1$, $i=1, \ldots, 2m$, be disjoint closed arcs
around $t_i$ with length $1/k$. Then $\varphi(I_i)=S^1$, $i=1,
\ldots, 2m$. Consider a smooth map $\theta: S^1 \to [0,1]$ such that for each
$i=1, \ldots, 2m +1$, $\theta |_{ I_i}=t_i$ and put
$f_t:=h_{\theta(t)}$, for each $t$. Now, we define the iterated
function system $\Psi$ by
\begin{equation*}\label{2.2.1}
 \Psi:S^1 \times X \to X, \ (t,x)\mapsto f_t(x).
\end{equation*}
The choice of $f_t$ ensures that $\Psi$ is continuous. Take
$\Sigma^+=(S^1)^{\mathbb{N}}$ and equip it with the product
topology.
\begin{lemma}
The iterated function system $\Psi$ is weakly hyperbolic.
\end{lemma}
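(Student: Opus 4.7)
The plan is to exhibit a uniform one-step contraction modulus for the family $\{h_\tau : \tau \in [0,1]\}$ and iterate. Since $f_s = h_{\theta(s)}$ with $\theta(S^1) \subset [0,1]$, the generators of $\Psi$ form a subfamily of $\{h_\tau\}$, so it is enough to prove that $\textnormal{diam}(h_{\tau_1}\circ\cdots\circ h_{\tau_n}(X)) \to 0$ for every sequence $(\tau_j)_{j\geq 1} \in [0,1]^{\mathbb{N}}$. Property (3) guarantees that all the compositions remain inside $X$, so diameters are well defined and the whole argument takes place on the compact ball.

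\textbf{Step 1 (pointwise weak contraction).} For each $\tau \in [0,1]$, property (4) gives $\|Dh_\tau(x)\| < 1$ for every $x \neq x_\tau$, while at the fixed point $\|Dh_\tau(x_\tau)\| = \max(1-\eta(\tau),\lambda) \leq 1$. Because $X$ is a closed ball, hence convex, applying the mean-value inequality along the straight segment joining distinct points $x \neq y$ shows that the integrand $\|Dh_\tau(x+s(y-x))\|$ is strictly less than $1$ except at possibly one parameter $s$ (when the segment meets $x_\tau$). Consequently $d(h_\tau(x), h_\tau(y)) < d(x,y)$, so every $h_\tau$ is weakly contractive on $X$.

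\textbf{Step 2 (uniform modulus).} Define
\begin{equation*}
\phi(r) := \sup\bigl\{d(h_\tau(x), h_\tau(y)) : \tau \in [0,1],\ x,y\in X,\ d(x,y)\leq r\bigr\}, \quad r\in[0,\textnormal{diam}(X)].
\end{equation*}
Since the arc $\tau \mapsto h_\tau$ is continuous in the $C^2$-topology, the map $(\tau,x,y)\mapsto d(h_\tau(x), h_\tau(y))$ is continuous on the compact set $\{(\tau,x,y)\in[0,1]\times X\times X : d(x,y)\leq r\}$, so the supremum is attained at some triple $(\tau^*, x^*, y^*)$. For $r>0$, either $x^*=y^*$ (so $\phi(r)=0<r$) or Step 1 yields $\phi(r)=d(h_{\tau^*}(x^*), h_{\tau^*}(y^*)) < d(x^*,y^*) \leq r$. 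Hence $\phi(r)<r$ for every $r>0$, and a routine compactness argument shows that $\phi$ is monotone non-decreasing and continuous in $r$.

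\textbf{Step 3 (iteration and conclusion).} A straightforward induction using monotonicity of $\phi$ gives
\begin{equation*}
\textnormal{diam}\bigl(h_{\tau_1}\circ h_{\tau_2}\circ\cdots\circ h_{\tau_n}(X)\bigr) \leq \phi^{n}(\textnormal{diam}(X)) \qquad\text{for all } n\geq 1,
\end{equation*}
for every $(\tau_1,\tau_2,\ldots) \in [0,1]^{\mathbb{N}}$. The sequence $r_n := \phi^n(\textnormal{diam}(X))$ is non-increasing and bounded below by $0$, so converges to some $r_\infty \geq 0$; by continuity of $\phi$ the limit satisfies $r_\infty = \phi(r_\infty)$, which combined with $\phi(r)<r$ for $r>0$ forces $r_\infty = 0$. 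Specialising to $\tau_j = \theta(s_j)$ yields $\textnormal{diam}(f_{s_1}\circ\cdots\circ f_{s_n}(X))\to 0$ for every $\omega=(s_1,s_2,\ldots)\in\Sigma^+$, establishing weak hyperbolicity. The delicate point is the \emph{uniformity} in Step 2: individual weak contractivity gives no common contraction rate, and the argument essentially uses the compactness of the parameter space $[0,1]$; without it, the modulus $\phi_\tau(r)/r$ could approach $1$ as $\eta(\tau) \to 0$ along an escaping sequence of parameters, destroying the iteration.
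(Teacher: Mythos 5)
Your proof is correct, but it takes a genuinely different route from the paper's. The paper argues sequence by sequence via a dichotomy: if infinitely many of the symbols $t_n$ fall into the arcs $I_1,\ldots,I_{2m}$, it invokes the earlier lemma that the finite IFS generated by $f_1,\ldots,f_{2m}$ is weakly hyperbolic (which in turn rests on Edalat's result for the pair $\{f,\widetilde f\}$), using that the remaining fiber maps do not expand; if only finitely many do, it uses that each fiber map $f_{t_n}$ with $t_n\notin\bigcup_i I_i$ is a uniform contraction. You instead treat the whole compact family $\{h_\tau\}_{\tau\in[0,1]}$ at once: property (4), the computation $\|Dh_\tau(x_\tau)\|=\max(1-\eta(\tau),\lambda)\leq 1$, and convexity of $X$ give weak contractivity of every $h_\tau$, and compactness of $[0,1]\times X\times X$ upgrades this to a uniform modulus $\phi(r)<r$, which you then iterate. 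This buys more than the lemma asserts: the diameters decay at a rate $\phi^{n}(\textnormal{diam}(X))$ that is uniform over all sequences, the argument needs neither the finite-IFS lemma nor the structure of $\theta$, and it quietly repairs a uniformity issue in the paper's second case (the contraction constants of $f_t$ degenerate to $1$ as $\theta(t)$ approaches $0$, $1/2$ or $1$, so ``each remaining map is uniformly contracting'' does not by itself force the diameters to vanish, whereas your uniform modulus does). The only step worth spelling out is the regularity of $\phi$ in Step 2: full continuity is not needed, since monotonicity together with right-continuity --- obtained by the same compactness argument, passing to a limit of maximizers over the decreasing compact sets $\{(\tau,x,y): d(x,y)\leq r_n\}$ --- already yields $r_\infty=\phi(r_\infty)$ and hence $r_\infty=0$.
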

\begin{proof}
Let $\textbf{t}=(t_1 t_2 \ldots t_n \ldots) \in \Sigma^+$. Since
$\theta |_{ I_i}=t_i$, so for each $t \in I_i$,
$f_t:=h_{\theta(t)}=f_i$. Put $\mathcal{A}=\{n \in \mathbb{N}: t_n
\in \cup_{i=1}^{2m}I_i\}$. We consider two cases
\begin{itemize}
\item {\bf $\mathcal{A}$ is infinite}.~~Then there exists a
subsequence $t_{i_j}, \ j\geq 1$, of $\textbf{t}=(t_1 t_2 \ldots
t_n \ldots)$ so that $t_{i_j} \in \cup_{i=1}^{2m}I_i$ and hence
$f_{t_{i_j}}\in \{f_1, \ldots, f_{2m}\}$. This fact implies that
 $$\lim_{n \to \infty}\textnormal{diam}(f_{t_1} \circ \ldots \circ f_{t_n}(X))\leq \lim_{n \to \infty}\textnormal{diam}(f_{t_{i_1}} \circ \ldots \circ f_{t_{i_n}}(X))=0,$$
since the iterated function system generated by the mappings $f_i$, $i=1, \ldots, 2m$, is weakly hyperbolic, according to Lemma \ref{lem22}.\\
\item  {\bf $\mathcal{A}$ is finite}.~~$\mathcal{A}$ has at most
$\ell$ elements. Therefore,
$$\textnormal{diam}(f_{t_1} \circ \ldots \circ f_{t_\ell}\circ \ldots
\circ f_{t_n}(X))\leq \textnormal{diam}(f_{t_{\ell+1}} \circ \ldots
\circ f_{t_n}(X)).$$ But $\textnormal{diam}(f_{t_{\ell+1}} \circ
\ldots \circ f_{t_n}(X))$ tends to zero, since for each $n > \ell$, by part (2) of (\ref{e15}) and the definition of the fiber maps $f_t$,
the mapping $f_{t_n}$ is uniformly contracting. This fact
 implies that
 $$\textnormal{diam}(f_{t_1} \circ \ldots \circ f_{t_n}(X)) \to 0, \ \textnormal{as} \ n \to \infty.$$ Therefore, $\Psi$ is weakly hyperbolic as we have claimed.
\end{itemize}
\end{proof}
\begin{remark}
We can define the Hutchinson operator for the random iterated function system $\Psi$ by $\mathcal{L}(A)=\bigcup_{t \in S^1}f_t(A)$, for each compact subset $A \subset X$. In a similar way, we say that the random iterated function $\Psi$ admits a global attractor $\Lambda$ if
$\mathcal{L}^n(B) \to \Lambda$ whenever $n \to \infty$, for each compact set $B$ of $X$. Moreover, $\Lambda$ is invariant set by $\Psi$ and it is the unique fixed point of the operator $\mathcal{L}$. In particular, by Theorem A of \cite{AJ} the attractor $\Lambda$ is defined by
\begin{equation*}
  \Lambda:=\{\lim_{n \to +\infty} f_{t_1} \circ f_{t_2}\circ \ldots \circ f_{t_n}(X):\textbf{t}=(t_1, \ldots, t_n, \ldots) \in \Sigma^+ \}.
\end{equation*}
\end{remark}
Now, we consider the global attractor $\Lambda$ of $\Psi$ which is obtained from the previous remark.
By the construction of $\Psi$, the condition (5) of (\ref{e15}) and the covering property (\ref{e24}) in Theorem \ref{thm1}, there exists a topological ball $B \subset X$ so that
\begin{equation}\label{e17}
  \textnormal{Cl}(B) \subset \bigcup_{i=1}^{2m} f_{t_i}(B) \subset \bigcup_{t \in S^1}f_t(B).
  \end{equation}
By applying the argument used in Lemma \ref{lem22}, the next lemma follows by any modifications.
\begin{lemma}\label{lemmainterior}
The unique global attractor $\Lambda$ of the iterated function system $\Psi$ has nonempty interior.
Moreover, $ \textnormal{Cl}(B) \subset \Lambda$.
\end{lemma}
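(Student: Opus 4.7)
The plan is to mimic, essentially verbatim, the proof of the earlier unnamed lemma (the one concluding $\overline{B} \subset K$ from the covering property $\overline{B} \subset f_1(B) \cup \ldots \cup f_k(B)$), now working in the continuous-parameter setting of $\Psi$. The two ingredients that make the argument go through are already in hand: the covering property \eqref{e17} and the weak hyperbolicity of $\Psi$ just proved. The claim that $\Lambda$ has nonempty interior will then be an immediate consequence of $\overline{B} \subset \Lambda$, since $B$ is a topological ball.

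First I would pick an arbitrary $x \in B$ and construct inductively a sequence $(t_n)_{n\ge 1} \subset S^1$ with
\[
x \in \bigcap_{n \ge 1} f_{t_1} \circ f_{t_2} \circ \cdots \circ f_{t_n}(B).
\]
For the base step, \eqref{e17} says $\overline{B} \subset \bigcup_{i=1}^{2m} f_{t_i}(B)$, so some index $i_1$ gives $x \in f_{t_{i_1}}(B)$; set $t_1 := t_{i_1}$. For the inductive step, assuming $x \in f_{t_1} \circ \cdots \circ f_{t_n}(B)$, apply \eqref{e17} again inside the innermost $B$ to produce $t_{n+1} \in \{t_1,\dots,t_{2m}\}$ such that $x \in f_{t_1} \circ \cdots \circ f_{t_n} \circ f_{t_{n+1}}(B)$. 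Note that the constructed sequence lies entirely in the finite set $\{t_1,\dots,t_{2m}\} \subset S^1$, so there is no issue with the continuous parameter space $S^1$ — the covering property only involves the finite collection $\{f_{t_1},\dots,f_{t_{2m}}\}$.

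Next, by weak hyperbolicity of $\Psi$,
\[
\textnormal{diam}\bigl(f_{t_1} \circ \cdots \circ f_{t_n}(X)\bigr) \longrightarrow 0,
\]
and since $B \subset X$ the same holds with $B$ in place of $X$. Combined with Lemma \ref{lem444} (applied in the setting of $\Psi$), we conclude that
\[
x \;=\; \lim_{n \to \infty} f_{t_1} \circ \cdots \circ f_{t_n}(X),
\]
so by the characterization of $\Lambda$ recalled in the remark just before the lemma, $x \in \Lambda$. This shows $B \subset \Lambda$, and since $\Lambda$ is compact (hence closed) we get $\overline{B} \subset \Lambda$, proving the second assertion. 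The first assertion then follows because $B$ was chosen in the construction to be a topological ball, so $\textnormal{int}(B) \neq \emptyset$ and therefore $\textnormal{int}(\Lambda) \supset \textnormal{int}(B) \neq \emptyset$.

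The only place requiring genuine care is the verification that the point-fibred / convergence statement of Lemma \ref{lem444} applies to $\Psi$ (whose parameter space $S^1$ is not finite); but since the argument only ever uses finitely many fiber maps $f_{t_1},\dots,f_{t_{2m}}$ at each finite stage and weak hyperbolicity is a uniform statement over all sequences in $\Sigma^+$, there is no real obstacle — this is precisely the "any modifications" alluded to in the lemma's preamble. I expect the entire proof to run in a few lines.
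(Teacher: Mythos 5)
Your proposal is correct and follows essentially the same route as the paper, which simply asserts that the lemma follows ``by applying the argument used'' earlier, i.e.\ the inductive selection of indices from the finite covering $\textnormal{Cl}(B) \subset \bigcup_{i=1}^{2m} f_{t_i}(B)$ combined with weak hyperbolicity of $\Psi$ and the limit characterization of $\Lambda$; you have merely written out the details the paper leaves implicit. Your remark that the argument only ever invokes the finitely many maps $f_{t_1},\dots,f_{t_{2m}}$, so the continuous parameter space causes no difficulty, is exactly the point the paper glosses over.
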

Let $\varrho:\Sigma^+ \to \Sigma^+$ be the left shift on
$\Sigma^+=(S^1)^{\mathbb{N}}$. For each sequence $\textbf{t}=(t_1
t_2 \ldots t_n \ldots) \in \Sigma^+$, we denote
$\textbf{t}|_n:=(t_1 t_2 \ldots t_n )$ and
$f_{\textbf{t}|_n}:=f_{t_1} \circ \ldots \circ f_{t_n}$. We define
the \emph{coding map}
\begin{equation}\label{e18}
 \tau: \Sigma^+ \to X, \ \tau(\textbf{t})= \lim f_{\textbf{t}|_n}(X).
\end{equation}
Since $\Psi$ is weakly hyperbolic, the mapping $\tau$ is
well-defined and continuous by \cite[Lemma.~2.7]{AJ}. Now, we
define the associated skew product of $\Psi$ by
\begin{equation}\label{e16}
  F_\Psi:S^1 \times X \to S^1 \times X, \ F_\Psi(t,x)=(\varphi(t),\Psi(t,x))=(\varphi(t),f_t(x)),
\end{equation}
where, as before, $\varphi(t)=kt \ (\textnormal{mod} \ 1)$. In the
rest, we show that there exists an open neighborhood of $F_\Psi$ in the space of $\mathcal{C}(\mathbb{T})$ which satisfies
the conclusion of the main theorems.
\section{Skew products}
In this section we will discuss skew products of the form
\begin{equation*}
F:Y \times X \to Y \times X, \ F(y,x)=(\theta(y),f(y,x)),
\end{equation*}
where $X$ is a compact ball of $\mathbb{R}^2$ and $Y$ be a metric space equipped with a Borel measure $\nu$ and suppose that $\theta : Y \to Y$
is a continuous map ergodic with respect to $\nu$.
\subsection{Skew products and Their Extensions}
Let us consider the associated skew product $F_\Psi$ of the IFS $\Psi$ over the expanding circle map $\varphi$ that is defined by (\ref{e16}).
Note that for iterates of $F_\Psi$, we denote
$$F_\Psi^{n}(t,x)=(\varphi^{n}(t),f_{\varphi^{n-1}(t)} \circ \ldots \circ f_{t}(x))=(\varphi^{n}(t),f^{n}_{t}(x)).$$
Clearly the expanding map $\varphi$ is not invertible. We mention that if $\varphi:S^1 \rightarrow S^1$ is a $C^2$-expanding endomorphism then
$\varphi$ possesses an absolutely continuous invariant ergodic measure $\nu^+$ whose density is bounded and bounded away from zero (see \cite{M}).
Now we define the natural extension of $(\varphi, S^1, \nu^+)$. Indeed,
by the inverse limit construction (\cite{Wi}), $\varphi$ admits an extension $\xi$ which is a homeomorphism on the solenoid, i.e. the space
$$\mathcal{S}=\{(\ldots,t_{-1},t_{0})\in (S^1)^{\mathbb{Z}^-}: t_{-j}=\varphi(t_{-j-1})\}$$
endowed with the product topology, defined by
\begin{equation*}\label{3.9}
 \xi(\textbf{t})=(\ldots,t_{-1},t_{0},\varphi(t_0)), \ \textnormal{for} \ \textbf{t}=(\ldots,t_{-1},t_{0}).
\end{equation*}
 The induced skew product map on $\mathcal{S}\times X$, denoted by $G_\Psi$, has the form
\begin{equation}\label{e66}
 G_\Psi(\textbf{t},x)=(\xi(\textbf{t}),g(\textbf{t},x))=(\xi(\textbf{t}),f_{\textbf{t}}(x)),
\end{equation}
where $f_{\textbf{t}}(x):=f_{t_0}(x)$
and the inverse map is given by
\begin{equation*}
G_\Psi^{-1}(\textbf{t},x)=(\ldots,t_{-2},t_{-1},(f_{t_{-1}})^{-1}(x)).
\end{equation*}
Consider the projection
\begin{equation}\label{e99}
\pi:\mathcal{S}\rightarrow S^1, \pi(\textbf{t})=t_0.
\end{equation}
Then $\pi \circ \xi=\varphi \circ \pi$ which means that $\pi$ is a semiconjugacy between $F_\Psi$ and $G_\Psi$.
by Kolmogorov's extension theorem, the solenoid $\mathcal{S}$ has an invariant measure $\nu$ inherited from the invariant measure $\nu^+$ for $\varphi$ on the circle. 
Now,  consider an invariant measure $\mu^+$ for $F_\Psi$ on $S^1\times X$ with marginal $\nu^+$ whose existence is guaranteed by \cite{Cr} and let
$\mu_{t}^+$ be its disintegrations. The following lemma clarifies relation between invariant measures for $F_\Psi$ and its extension $G_\Psi$.
\begin{lemma}[\cite{Cr, H}]
Given the invariant measure $\mu^+$ for $F_\Psi$ acting on $S^1 \times X$, with marginal $\nu^+$ on $S^1$, there is an invariant measure $\mu$ for $G_\Psi$ acting on $\mathcal{S}\times X,$ with marginal $\nu$ on $\mathcal{S}$. For $\nu$-almost all $\textnormal{\textbf{t}}=(\ldots,t_{-1},t_{0})\in \mathcal{S},$ the limit
$$\mu_{\textnormal{\textbf{t}}}=\lim_{n\rightarrow \infty}f^{n}_{t_{-n}}\mu_{t_{-n}}^+$$
gives its disintegrations.
Conversely, given an invariant measure $\mu$ for $G_\Psi$ on $\mathcal{S}\times X$,
$$\mu_t^+ =\mathbb{E}(\mu \mid \mathcal{F}^+)_t$$
is an invariant measure for $F_\Psi$ on $S^1\times X.$ Moreover, the correspondence maps ergodic measures to ergodic measure in either direction.
\label{L1.1}
\end{lemma}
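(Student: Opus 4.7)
The plan is to exploit the semiconjugacy $\pi:\mathcal{S}\to S^1$, which lifts to the projection $\Pi:\mathcal{S}\times X\to S^1\times X$, $\Pi(\mathbf{t},x)=(t_0,x)$, satisfying $\Pi\circ G_\Psi=F_\Psi\circ\Pi$. This identification will allow transport of invariant measures between the two systems and will make precise how a measure on the solenoid side is reconstructed from one on the base.

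For the direction $\mu\mapsto\mu^+$ I would set $\mu^+:=\Pi_*\mu$ directly. Then $F_\Psi$-invariance and the marginal being $\nu^+=\pi_*\nu$ are immediate, and writing $\nu=\int_{S^1}\nu_t\,d\nu^+(t)$ with $\nu_t$ the conditional measure on $\pi^{-1}(t)$, composition of disintegrations yields
$$\mu^+_t=\int_{\pi^{-1}(t)}\mu_{\mathbf{t}}\,d\nu_t(\mathbf{t}),$$
which is exactly $\mathbb{E}(\mu\mid\mathcal{F}^+)_t$ for $\mathcal{F}^+:=\Pi^{-1}\mathcal{B}(S^1\times X)$.

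The direction $\mu^+\mapsto\mu$ is the heart of the proof. I would form the candidate fibre measures $\mu_{\mathbf{t}}^{(n)}:=(f^n_{t_{-n}})_*\mu^+_{t_{-n}}$ and use the weak hyperbolicity of $\Psi$, already established earlier in the paper, to show that the sequence converges weakly for every $\mathbf{t}$. Because $f^n_{t_{-n}}(X)=f_{t_{-1}}\circ\cdots\circ f_{t_{-n}}(X)$ is a nested decreasing family with diameter tending to zero, the supports of $\mu_{\mathbf{t}}^{(n)}$ collapse to the single point $\tau(\mathbf{t})$ defined by the coding map in $(\ref{e18})$, hence $\mu_{\mathbf{t}}^{(n)}\to\delta_{\tau(\mathbf{t})}=:\mu_{\mathbf{t}}$. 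Assembling $\mu:=\int\mu_{\mathbf{t}}\,d\nu(\mathbf{t})$, the disintegrated invariance follows from
$$\mu_{\xi(\mathbf{t})}^{(n+1)}=(f_{t_0})_*(f^n_{t_{-n}})_*\mu^+_{t_{-n}}=(f_{t_0})_*\mu_{\mathbf{t}}^{(n)}$$
by passing to the limit to get $(f_{t_0})_*\mu_{\mathbf{t}}=\mu_{\xi(\mathbf{t})}$; that $\Pi_*\mu=\mu^+$ is then checked on continuous test functions using the $F_\Psi$-invariance of $\mu^+$.

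For the final assertion, ergodicity transfers in both directions because $G_\Psi$ is the natural extension of $F_\Psi$: the sub-$\sigma$-algebras $G_\Psi^{-n}\mathcal{F}^+$ increase modulo $\mu$ to the full Borel $\sigma$-algebra, so each $G_\Psi$-invariant set coincides $\mu$-a.e.\ with $\Pi^{-1}(A')$ for some $F_\Psi$-invariant $A'\subset S^1\times X$, with $\mu(\Pi^{-1}(A'))=\mu^+(A')$, making the two invariant measure algebras isomorphic. I expect the main obstacle to be the simultaneous justification of the pointwise weak convergence $\mu_{\mathbf{t}}^{(n)}\to\mu_{\mathbf{t}}$ and of $G_\Psi$-invariance of the assembled measure: both rely decisively on weak hyperbolicity of $\Psi$, but the delicate bookkeeping lies in measurability of $\mathbf{t}\mapsto\mu_{\mathbf{t}}$ and in the interchange of limit and integration that identifies $\Pi_*\mu$ with $\mu^+$.
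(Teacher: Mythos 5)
You should first note that the paper never proves Lemma \ref{L1.1}: it is imported from \cite{Cr} and \cite{H}, so there is no internal proof to match your argument against. Judged on its own, your proof is essentially correct for the specific pair $(F_\Psi,G_\Psi)$, but it takes a genuinely different and more special route than the cited general results. You lean on the weak hyperbolicity of $\Psi$: the backward compositions $f_{t_{-1}}\circ\cdots\circ f_{t_{-n}}(X)$ are nested with diameters tending to zero uniformly, so your $\mu^{(n)}_{\mathbf t}$ collapses to a Dirac mass for \emph{every} $\mathbf t$, the relation $(f_{t_0})_*\mu^{(n)}_{\mathbf t}=\mu^{(n+1)}_{\xi(\mathbf t)}$ passes to the limit, and $(\pi\times\mathrm{id})_*\mu=\mu^+$ follows from $F_\Psi$-invariance of $\mu^+$ plus uniform collapse and dominated convergence; the ergodicity transfer through the filtration $\bigvee_{n\ge0}G_\Psi^{\,n}(\pi\times\mathrm{id})^{-1}\mathcal B$ is the standard natural-extension argument and is sound. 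What your route buys is an elementary proof with convergence at every $\mathbf t$ and explicit Dirac disintegrations (it even shows the invariant measure of $F_\Psi$ with marginal $\nu^+$ is unique); what it loses is the generality of \cite{Cr, H}, where the fibre maps are arbitrary diffeomorphisms, backward images need not shrink, and the limit $f^{n}_{t_{-n}}\mu^+_{t_{-n}}$ exists only $\nu$-a.s., obtained by a measure-valued martingale convergence argument for conditional measures over the increasing $\sigma$-algebras of the past; your argument would not survive in that setting (nor verbatim for perturbations where one only controls Lyapunov exponents). Two bookkeeping points you should make explicit: $\mu^+_{t_{-n}}$ is defined along $\nu$-a.e.\ backward itinerary because every coordinate marginal of $\nu$ is $\nu^+$, and the limit point of the supports is $\gamma(\xi^{-1}\mathbf t)$, i.e.\ the coding map of (\ref{e18}) applied to the backward word $(t_{-1},t_{-2},\dots)$, not to $\mathbf t$ itself.
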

\subsection{Maximal Attractors and Continuous Invariant Graphs}
In this subsection, we prove the existence of a non-hyperbolic maximal attractor for the skew product $G_\Psi$ and we show that it
is a continuous invariant graph.
Indeed, since the region $\mathcal{S} \times X$ is trapping by $G_\Psi$ (see part (3) of (\ref{e15})), it admits a maximal attractor
\begin{equation*}
  \Delta_G:=\bigcap_{n\geq 0}G_\Psi^n(\mathcal{S} \times X).
\end{equation*}
It is enough to show that $\Delta_G$ is a continuous invariant graph, that is there exists a continuous map $\gamma:\mathcal{S} \to X$ whose graph, $\Gamma$, coincides with $\Delta_G$ and satisfies
$$G_\Psi(\textbf{t}, \gamma(\textbf{t}))=(\xi(\textbf{t}),\gamma(\xi (\textbf{t})) ),$$ for each $\textbf{t} \in \mathcal{S}$.
\begin{proposition}\label{pro1}
The maximal attractor of $G_\Psi$ is a continuous invariant graph.
\end{proposition}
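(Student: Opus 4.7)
The plan is to describe $\Delta_G$ fibre-by-fibre, identify each fibre with a single point using the weak hyperbolicity of $\Psi$, and then invoke a closed-graph argument for continuity.

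First, I would compute $G_\Psi^n(\mathcal{S} \times X)$ explicitly. Since $\xi$ is a homeomorphism of the solenoid, every point $\textbf{s}=(\ldots,s_{-1},s_0)\in\mathcal{S}$ has a unique $\xi^{-n}$-preimage whose zeroth coordinate is $s_{-n}$, and the successive coordinates of this preimage are taken by $\varphi$ onto $s_{-n+1},\ldots,s_{-1},s_0$. Unravelling the definition (\ref{e66}) of $G_\Psi$, this gives
\begin{equation*}
G_\Psi^n(\mathcal{S} \times X) = \{(\textbf{s}, y)\in\mathcal{S}\times X : y \in f_{s_{-1}} \circ f_{s_{-2}} \circ \cdots \circ f_{s_{-n}}(X)\}.
\end{equation*}
By property $(3)$ of the arc $H$ used to construct the fiber maps, $f_t(X)\subset \textnormal{int}(X)$ for every $t$, so the sets $f_{s_{-1}}\circ\cdots\circ f_{s_{-n}}(X)$ form a decreasing nested sequence of non-empty compact subsets of $X$. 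Hence the fibre of the maximal attractor over $\textbf{s}$ is
\begin{equation*}
\Delta_G \cap \bigl(\{\textbf{s}\}\times X\bigr) = \{\textbf{s}\}\times \bigcap_{n\geq 1} f_{s_{-1}} \circ \cdots \circ f_{s_{-n}}(X).
\end{equation*}

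Next, I would apply weak hyperbolicity of $\Psi$, established in the previous lemma: for any sequence of base points, $\textnormal{diam}(f_{s_{-1}} \circ \cdots \circ f_{s_{-n}}(X)) \to 0$ as $n\to\infty$. Combined with the nested property above, this forces the intersection to be a single point $\gamma(\textbf{s})$. This defines a map $\gamma:\mathcal{S}\to X$ whose graph equals $\Delta_G$. Invariance is immediate: the solenoid shift $\xi$ sends $(\ldots,s_{-1},s_0)$ to the sequence whose $(-k)$-coordinate is $s_{-k+1}$ for $k\geq 1$, so
\begin{equation*}
\gamma(\xi(\textbf{s})) = \lim_{n\to\infty} f_{s_0}\circ f_{s_{-1}}\circ\cdots\circ f_{s_{-n+1}}(x) = f_{s_0}(\gamma(\textbf{s})),
\end{equation*}
which is precisely the invariance condition $G_\Psi(\textbf{s},\gamma(\textbf{s}))=(\xi(\textbf{s}),\gamma(\xi(\textbf{s})))$.

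Finally, to obtain continuity of $\gamma$, I would invoke the closed-graph principle. The set $\Delta_G$ is closed in the compact Hausdorff space $\mathcal{S}\times X$ as a countable intersection of closed sets, and it meets each fibre in a single point. Hence if $\textbf{s}_k\to\textbf{s}$ in $\mathcal{S}$, any subsequential limit $y^*$ of $\gamma(\textbf{s}_k)$, which exists because $X$ is compact, satisfies $(\textbf{s},y^*)\in\Delta_G$ by closedness, so $y^*=\gamma(\textbf{s})$ by uniqueness. Since every convergent subsequence has the same limit $\gamma(\textbf{s})$, the whole sequence $\gamma(\textbf{s}_k)$ converges to $\gamma(\textbf{s})$, proving continuity. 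The only subtle point in the argument is to confirm that weak hyperbolicity applies to the backward sequences coming from the solenoid (which are constrained by $s_{-j}=\varphi(s_{-j-1})$) rather than to arbitrary sequences in $\Sigma^+$; but the conclusion of the weak hyperbolicity lemma is alphabet-independent and holds for every sequence in $(S^1)^{\mathbb{N}}$, which covers our case.
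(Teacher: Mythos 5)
Your proof is correct, and its skeleton is the paper's: weak hyperbolicity of $\Psi$ applied to the backward coordinates of the solenoid forces the diameters of the backward compositions to go to zero, the fibres of the maximal attractor are singletons defining $\gamma$, and invariance follows by reindexing the limit and using continuity of $f_{s_0}$. Where you genuinely diverge is in the continuity argument and in the identification of the graph with $\Delta_G$. The paper first proves a Cauchy-type lemma showing that $X(\textbf{t},n,x)=f_{\textbf{t}}\circ f_{\xi^{-1}(\textbf{t})}\circ\cdots\circ f_{\xi^{-n}(\textbf{t})}(x)$ converges uniformly in $(\textbf{t},x)$, and then gets continuity of $\gamma$ by an $\varepsilon/3$ estimate combining this uniform tail bound with continuity of the finite composition on a cylinder neighbourhood of $\textbf{t}$; the equality of the graph with $\Delta_G$ is then asserted from $\Delta_{\textbf{t}}=\bigcap_n X(\textbf{t},n)$. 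You instead compute the fibres of $G_\Psi^n(\mathcal{S}\times X)$ directly from the definition, so that the equality of the graph with $\Delta_G$ is automatic, and you obtain continuity softly: $\Delta_G$ is compact with singleton fibres, and the closed-graph/subsequence argument (legitimate here since $\mathcal{S}$ is metrizable and $X$ compact) gives continuity without ever invoking uniformity of the convergence in $\textbf{t}$. The trade-off is that the paper's quantitative route produces the uniform convergence that is reused later (for instance in the SRB argument of Theorem \ref{thm22}), while yours is shorter and more self-contained. A further point in your favour: with the paper's convention $f_{\textbf{t}}=f_{t_0}$, the fibre of $\Delta_G$ over $\textbf{s}$ is exactly your $\bigcap_{n\geq 1}f_{s_{-1}}\circ\cdots\circ f_{s_{-n}}(X)$, whereas the paper's $X(\textbf{t},n,x)$ carries an extra leading $f_{t_0}$; under that off-by-one the displayed identity $G_\Psi(\textbf{t},X(\textbf{t},n,0))=(\xi(\textbf{t}),X(\xi(\textbf{t}),n+1,0))$ does not literally hold (the left side begins with $f_{t_0}\circ f_{t_0}$, the right with $f_{\varphi(t_0)}\circ f_{t_0}$), so your indexing is the consistent one and your verification of invariance is cleaner.
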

The rest subsection is devoted to prove the proposition. For each $x \in X$ and $\textbf{t} \in \mathcal{S}$ let us take
\begin{equation*}
 X(\textbf{t},n,x):= f_{\textbf{t}} \circ f_{\xi^{-1}(\textbf{t})}\circ \dots \circ f_{\xi^{-n}(\textbf{t})}(x),
\end{equation*}
\begin{lemma}
For each $x \in X$ and $\textbf{t} \in \mathcal{S}$ the sequence $X(\textbf{t},n,x)$ is convergent and the limit does not depend on $x$.
\end{lemma}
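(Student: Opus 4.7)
The plan is to recognize that the backward composition defining $X(\mathbf{t},n,x)$ is, up to a re-indexing of the alphabet, a forward IFS composition to which the weak hyperbolicity of $\Psi$ (already established in the previous lemma) applies directly. Writing $\mathbf{t} = (\ldots, t_{-1}, t_0)$, the definition $f_{\xi^{-k}(\mathbf{t})}(x) = f_{t_{-k}}(x)$ gives
\begin{equation*}
 X(\mathbf{t},n,x) = f_{t_0}\circ f_{t_{-1}} \circ \cdots \circ f_{t_{-n}}(x).
\end{equation*}
So if I associate to $\mathbf{t}\in\mathcal{S}$ the forward sequence $\omega(\mathbf{t}) := (t_0, t_{-1}, t_{-2}, \ldots) \in \Sigma^+$, then $X(\mathbf{t},n,x) = f_{\omega(\mathbf{t})|_{n+1}}(x)$ in the notation used above (\ref{e18}).

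Next I would exploit the trapping property (3) of the fiber maps $h_t$ in the construction, namely $f_t(X) \subset \operatorname{int}(X)$ for every $t$. This gives the nested inclusion
\begin{equation*}
 f_{t_0}\circ\cdots\circ f_{t_{-n-1}}(X) \;\subset\; f_{t_0}\circ\cdots\circ f_{t_{-n}}(X),
\end{equation*}
so the images $K_n(\mathbf{t}) := f_{t_0}\circ\cdots\circ f_{t_{-n}}(X)$ form a decreasing sequence of nonempty compact subsets of $X$. By the previous lemma $\Psi$ is weakly hyperbolic, hence
$\operatorname{diam}(K_n(\mathbf{t})) \to 0$ as $n \to \infty$. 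A nested decreasing sequence of compacta with vanishing diameters has an intersection consisting of exactly one point; denote it
\begin{equation*}
 \gamma(\mathbf{t}) := \bigcap_{n\ge 0} K_n(\mathbf{t}).
\end{equation*}

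Finally, for any $x \in X$ we have $X(\mathbf{t},n,x) \in K_n(\mathbf{t})$, so
\begin{equation*}
 d\bigl(X(\mathbf{t},n,x),\,\gamma(\mathbf{t})\bigr) \;\leq\; \operatorname{diam}(K_n(\mathbf{t})) \;\longrightarrow\; 0,
\end{equation*}
which proves both convergence and the independence of the limit on the choice of $x \in X$. There is essentially no obstacle here beyond bookkeeping: the entire content is packaged in the weak hyperbolicity of $\Psi$ together with the trapping property (3), and the only point requiring a moment's thought is the index-reversal identifying $\mathcal{S}$-coordinates with forward words in $\Sigma^+$, which is needed in order to invoke the weak hyperbolicity (stated for forward sequences) against the backward composition that appears in the natural extension.
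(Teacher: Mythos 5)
Your proof is correct and takes essentially the same route as the paper: both arguments rest on the weak hyperbolicity of $\Psi$ applied to the reversed word $(t_0,t_{-1},t_{-2},\dots)$ together with the nesting $K_{n+1}(\textbf{t})\subset K_n(\textbf{t})$, the only difference being that you exhibit the limit as the Cantor intersection of the nested compacta while the paper runs the equivalent Cauchy-sequence argument. The paper additionally records that the convergence is uniform in $\textbf{t}$ and $x$ (via a uniform $n_0(\epsilon)$ coming from weak hyperbolicity), which goes beyond the lemma's statement but is used later in proving continuity of $\gamma$.
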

\begin{proof}
Since the iterated function system $\Psi$ is weakly hyperbolic the following holds (\cite{AJ}): for each $\epsilon >0$ there exists $n_{0}=n_{0}(\epsilon) \in \mathbb{N}$ such that for all $n\geq n_{0}$ and any $\textbf{t} \in \mathcal{S}$ we have that $\textnormal{diam} (f_{\textbf{t}} \circ f_{\xi^{-1}(\textbf{t})}\circ \dots \circ f_{\xi^{-n}(\textbf{t})}(X))< \epsilon$. Let $\epsilon >0$ be given. For each $x \in X$ and $\textbf{t} \in \mathcal{S}$
 we observe that  $X (\textbf{t},n,x)\in f_{\textbf{t}} \circ f_{\xi^{-1}(\textbf{t})}\circ \dots \circ f_{\xi^{-n}(\textbf{t})}(X) $ and
 $$X(\textbf{t},n+\ell,x)\in f_{\textbf{t}} \circ f_{\xi^{-1}(\textbf{t})}\circ \dots \circ f_{\xi^{-n-\ell}(\textbf{t})}(X)\subset f_{\textbf{t}} \circ f_{\xi^{-1}(\textbf{t})}\circ \dots \circ f_{\xi^{-n}(\textbf{t})}(X).$$ Thus one gets
 $$d(X(\textbf{t},n+\ell,x),X(\textbf{t},n,x))<\epsilon,$$
 for all $n\geq n_{0}$ and $\ell\in \mathbb{N}$. This shows that the sequence
 $X(\textbf{t},n,x) $ is Cauchy and thus convergent for all $x\in X$ and $\textbf{t} \in \mathcal{S}$. Since $n_{0}$ does not depend on $\textbf{t}$, the limit $\lim_{n \to \infty} f_{\textbf{t}} \circ f_{\xi^{-1}(\textbf{t})}\circ \dots \circ f_{\xi^{-n}(\textbf{t})}(x)$ is uniform on $\textbf{t}$ and $x$.
 Now, for each $x\in X$ and $\textbf{t} \in \mathcal{S}$,
 $$X(\textbf{t},n,x) , X(\textbf{t},n,y)\in f_{\textbf{t}} \circ f_{\xi^{-1}(\textbf{t})}\circ \dots \circ f_{\xi^{-n}(\textbf{t})}(X)$$
 and then
 $$\lim_{n \to +\infty} d (X(\textbf{t},n,x) , X(\textbf{t},n,y))=0$$
 which shows that the limit does not depend on $x$.
\end{proof}
The above lemma motivates us to define the map $\gamma$ by
\begin{equation*}\label{e1111}
\gamma: \mathcal{S} \to X, \ \gamma(\textbf{t}):= \lim_{n \to + \infty}f_{\textbf{t}} \circ f_{\xi^{-1}(\textbf{t})}\circ \dots \circ f_{\xi^{-n}(\textbf{t})}(0).
\end{equation*}
Let us consider the following classical metric on $\mathcal{S}$.  For each $\textbf{t}=(\ldots, t_{-1}, t_0), \textbf{t}^{\prime}=(\ldots, t^{\prime}_{-1}, t^{\prime}_0)\in \mathcal{S}$,
$$d_S(\textbf{t},\textbf{t}^{\prime}):=\sum_{i=0}^\infty \frac{d(t_{-i},t_{-i}^{\prime})}{2^i},$$
\begin{lemma}\label{lem34}
The map $\gamma$ is continuous.
\end{lemma}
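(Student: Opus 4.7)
The plan is to show continuity of $\gamma$ by combining the uniform convergence provided by weak hyperbolicity with the joint continuity of the composition map $(t_0,t_{-1},\dots,t_{-N}) \mapsto f_{t_0}\circ f_{t_{-1}}\circ \dots \circ f_{t_{-N}}$ for any fixed $N$. Observe first that the metric $d_S$ controls finitely many coordinates: $d(t_{-i},t'_{-i}) \leq 2^{i}\, d_S(\textbf{t},\textbf{t}')$ for each $i \geq 0$, so making $d_S$ small forces the first $N+1$ coordinates of $\textbf{t}'$ to be close to those of $\textbf{t}$.

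Given $\epsilon > 0$, I would first invoke weak hyperbolicity of $\Psi$ (in its uniform form, as used in the previous lemma) to pick $N \in \mathbb{N}$ with
\begin{equation*}
\textnormal{diam}\bigl(f_{\textbf{s}}\circ f_{\xi^{-1}(\textbf{s})}\circ \dots \circ f_{\xi^{-N}(\textbf{s})}(X)\bigr) < \epsilon/3
\end{equation*}
for every $\textbf{s}\in \mathcal{S}$. In particular both $\gamma(\textbf{t})$ and the point $f_{\textbf{t}}\circ \dots \circ f_{\xi^{-N}(\textbf{t})}(0)$ lie in this set (and similarly with $\textbf{t}'$ in place of $\textbf{t}$), so each of $\gamma(\textbf{t})$, $\gamma(\textbf{t}')$ is within $\epsilon/3$ of the corresponding $N$-th composition applied to $0$.

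Next I would use that the family $t \mapsto f_t$ is continuous in the $C^0$ topology on $X$ (built from the smooth arc $H$ and the smooth map $\theta$). Composition of a fixed finite number of uniformly continuous maps with parameters is continuous in the parameters, so the map
\begin{equation*}
(s_0,\dots,s_{-N}) \mapsto f_{s_0}\circ f_{s_{-1}}\circ \dots \circ f_{s_{-N}}(0)
\end{equation*}
is uniformly continuous on $(S^1)^{N+1}$. Hence there is $\eta>0$ so that if $d(t_{-i},t'_{-i})<\eta$ for $i=0,1,\dots,N$, then
\begin{equation*}
d\bigl(f_{\textbf{t}}\circ \dots \circ f_{\xi^{-N}(\textbf{t})}(0),\ f_{\textbf{t}'}\circ \dots \circ f_{\xi^{-N}(\textbf{t}')}(0)\bigr) < \epsilon/3.
\end{equation*}
By the estimate $d(t_{-i},t'_{-i}) \leq 2^{i} d_S(\textbf{t},\textbf{t}')$, it suffices to take $\delta := 2^{-N}\eta$ so that $d_S(\textbf{t},\textbf{t}')<\delta$ forces the required closeness for $i=0,\dots,N$. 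A triangle inequality among the three $\epsilon/3$-estimates gives $d(\gamma(\textbf{t}),\gamma(\textbf{t}'))<\epsilon$, completing the proof.

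The only subtle point is the interplay between the two scales: the $N$ coming from weak hyperbolicity is chosen first (it depends on $\epsilon$ but not on the base point), and then $\delta$ is adjusted for this fixed $N$. This is clean because the uniform convergence in the previous lemma decouples the tail of the composition from the parameter dependence, leaving only a finite composition to control by continuity of $t \mapsto f_t$; I do not expect a genuine obstacle here.
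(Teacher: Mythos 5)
Your argument is correct and is essentially the paper's proof: both use the uniform convergence from weak hyperbolicity to reduce to a fixed finite composition, control that finite composition by continuity of $t\mapsto f_t$ in the parameters, and conclude with a triangle inequality (the paper phrases the closeness of base points via a product neighborhood $U$, while you quantify it through the estimate $d(t_{-i},t'_{-i})\leq 2^{i}d_S(\textbf{t},\textbf{t}')$ and $\delta=2^{-N}\eta$, which is the same thing). No gap.
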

\begin{proof}
Fix $\textbf{t} \in \mathcal{S}$ and $\epsilon >0$. By lemma above, there exists $m=m(\epsilon)$ such that  for all $\textbf{t}$ and $x$,
$$d(f_{\textbf{t}} \circ f_{\xi^{-1}(\textbf{t})}\circ \dots \circ f_{\xi^{-m}(\textbf{t})}(x), \gamma (\textbf{t}))<\epsilon$$
By the continuity of $f_{\textbf{t}} \circ f_{\xi^{-1}(\textbf{t})}\circ \dots \circ f_{\xi^{-m}(\textbf{t})}$, there is $\delta >0$ such that $d(f_{\textbf{t}} \circ f_{\xi^{-1}(\textbf{t})}\circ \dots \circ f_{\xi^{-m}(\textbf{t})}(x),f_{\textbf{t}^{\prime}} \circ f_{\xi^{-1}(\textbf{t}^{\prime})}\circ \dots \circ f_{\xi^{-m}(\textbf{t}^{\prime})}(x)< \varepsilon$, for all $x$, provided $d_S(\textbf{t},\textbf{t}^{\prime})< \delta$ then. Let $U$ be the neighborhood of $\textbf{t}$ given by
$$U=\ldots \times S^1 \times B(\xi^{-m}(\textbf{t}),\delta)\times B(\xi^{-1}(\textbf{t}),\delta) \times B(\textbf{t},\delta). $$
For $\textbf{t}^{\prime}\in U$ one gets
\begin{eqnarray*}
d(\gamma(\textbf{t}),\gamma(\textbf{t}^{\prime}))&\leq&
d(\gamma(\textbf{t}),f_{\textbf{t}} \circ f_{\xi^{-1}(\textbf{t})}\circ \dots \circ f_{\xi^{-m}(\textbf{t})}(x))\\
&+&d(f_{\textbf{t}} \circ f_{\xi^{-1}(\textbf{t})}\circ \dots \circ f_{\xi^{-m}(\textbf{t})}(x),f_{\textbf{t}^{\prime}} \circ f_{\xi^{-1}(\textbf{t}^{\prime})}\circ \dots \circ f_{\xi^{-m}(\textbf{t}^{\prime})}(x))\\&+&d(\gamma(\textbf{t}^{\prime}),f_{\textbf{t}^{\prime}} \circ f_{\xi^{-1}(\textbf{t}^{\prime})}\circ \dots \circ f_{\xi^{-m}(\textbf{t}^{\prime})}(x))\\&<&3\varepsilon.
\end{eqnarray*}
This shows that $\gamma$ is continuous.
\end{proof}
Let us consider the maximal attractor $\Delta_G$ of $G_\Psi$ and take $X_\textbf{t}=\{\textbf{t}\}\times X$. Then
$$ \Delta_\textbf{t}:=\Delta_G \bigcap X_\textbf{t} = \bigcap_{n\geq 0} X(\textbf{t},n),$$
It is not hard to see that $\Gamma$, the graph of $\gamma$, is equal to $\Delta_G$. On the other hand by the definition, the equality
$G_\Psi(\textbf{t}, X(\textbf{t},n,0))=(\xi(\textbf{t}),X(\xi(\textbf{t}),n+1,0))$
holds. This says that
$$G_\Psi(\textbf{t}, \gamma(\textbf{t}))=(\xi(\textbf{t}),\gamma(\xi (\textbf{t})) ).$$
This means that $\Gamma$ is an invariant graph. Now, in view of Lemma \ref{lem34} the proof of the proposition follows.
\subsection{Negative Maximal Lyapunov Exponents}
In the following, we concern the ergodic properties of the skew product $F_\Psi(t,x)=(\varphi(t), \Psi(t,x))$
and its extension $G_\Psi$.

Note that in our setting $\Psi(t,.) \in \textnormal{Diff}^2(X)$. We define a norm $\|.\|$ by
\begin{equation*}
 \| \Psi(t,.)\|:=\sup_{x \in X} \|D \Psi(t,x)\|
\end{equation*}
and we consider the Lipschitz metric
\begin{equation*}
  \rho(\Psi(t_1,.),\Psi(t_2,.)):=\|\Psi(t_1,.)-\Psi(t_2,.) \|+\sup_{x \in X}|\Psi(t_1,x)-\Psi(t_2,x)|.
\end{equation*}
Since the fiber maps $\Psi(t,.) \in \textnormal{Diff}^2(X)$ and $\nu^+$ is a probability measure, a compactness argument ensure that $\textnormal{log}^+\| \Psi(t,.)\| \in L^1(t,\nu^+)$. By Oseledec's classical theorem (\cite[Thm.~5.4]{Kr}), the limit
\begin{equation*}
 \lambda(t)=\lim_{n \to \infty}\frac{1}{n}\textnormal{log} \| \Psi^n(t,.)\|
\end{equation*}
exists for $\nu^+$ a.e. $t \in S^1$, where $\Psi^n(t,x)$ defined implicitly by $F_\Psi^n(t,x)=(\varphi^n(t),\Psi^n(t,x))$. By ergodicity of $(\varphi, S^1, \nu^+)$,
there exists $\lambda$ such that $\lambda(t)=\lambda$ for $\nu^+$ a.e. $t \in S^1$. We call $\lambda$ the \emph{maximal fiber Lyapunov exponent} of $F_\Psi$ (with respect to $\nu^+$). In fact, putting $\eta_n(t):=\log \| \Psi(\varphi^n(t),.)\|$, one gets the subassitive property $\eta_{n+l}(t) \leq \eta_n(t) + \eta_l(t)$,  \textnormal{for} \ \textnormal{all} $n,l > 0$. According to \cite[Thm.~5.4]{Kr},
\begin{equation*}
 \lim_{n \to \infty}\frac{1}{n}\textnormal{log} \| \Psi^n(t,.)\|=\inf \frac{1}{n}\textnormal{log} \| \Psi^n(t,.)\|.
\end{equation*}
Consider a perturbed skew product $\widetilde{F} \in \mathcal{C}(\mathbb{T})$ of $F_\Psi$ defined by $ \widetilde{F}(t,x)=(\varphi(t), \widetilde{\Psi}(t,x)$.
If
\begin{equation*}
 \lim_{n \to \infty}\frac{1}{n}\textnormal{log} \| \Psi^n(t,.)\|=\lambda < 0, \ \textnormal{for} \ \nu^+ \ a.e. \ t \in S^1,
\end{equation*}
for some $\lambda  < 0$ then given a sufficiently small $\varepsilon > 0$ there exists $\delta > 0$ such that if
$\rho(\Psi(t,.),\widetilde{\Psi}(t,.))< \delta$, for $\nu^+$ a.e. $t \in S^1$, then
$$ \lim_{n \to \infty}\frac{1}{n}\textnormal{log} \| \widetilde{\Psi}^n(t,.)\|=\lambda + \epsilon < 0, \ \textnormal{for} \ \nu^+ \ a.e. \ t \in S^1.$$
Consequently, given $\varepsilon > 0$ there exists a measurable function $C : S^1 \to \mathbb{R}^+$ such that for $\nu^+$ a.e. $t \in S^1$,
\begin{equation*}\label{3.7}
 \| \widetilde{\Psi}^n(t,.)\| < C(t)e^{(\lambda + \varepsilon)n}, \ \textnormal{for} \ \textnormal{all} \ n> 0.
\end{equation*}
The next lemma points out that the skew product $F_\Psi$ and its extension $G_\Psi$ possessing the negative maximal fiber Lyapunov exponents which ensure that, in particular, the maximal fiber Lyapunov exponent of $\widetilde{F}$ is also negative.
\begin{lemma}\label{neg-exponents}
The skew product map $F_\Psi$ and its extension $G_\Psi$ defined by (\ref{e66}) admit negative maximal fiber Lyapunov exponents with respect to $\nu^+$ and $\nu$, respectively.
\end{lemma}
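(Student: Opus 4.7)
The plan is to reduce the problem to a single-step pointwise operator-norm bound and then invoke the Birkhoff ergodic theorem applied to the ergodic base $(\varphi,\nu^+)$. The chain rule and submultiplicativity of the operator norm give
\[
\|\Psi^n(t,\cdot)\|\;\leq\;\prod_{j=0}^{n-1}\|\Psi(\varphi^j(t),\cdot)\|,
\]
so setting $\bar\lambda:=\int_{S^1}\log\|\Psi(s,\cdot)\|\,d\nu^+(s)$, Birkhoff yields
\[
\lambda\;=\;\lim_{n\to\infty}\tfrac{1}{n}\log\|\Psi^n(t,\cdot)\|\;\leq\;\bar\lambda
\]
for $\nu^+$-a.e.\ $t$. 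The whole task then reduces to showing $\bar\lambda<0$. The extension statement for $G_\Psi$ follows immediately from the identity $g^n(\textbf{t},\cdot)=\Psi^n(\pi(\textbf{t}),\cdot)$ together with $\pi_*\nu=\nu^+$, since both the cocycle norms and the invariant measures coincide under the projection.

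To prove $\bar\lambda<0$, I would first establish the pointwise bound $\|\Psi(t,\cdot)\|\leq 1$ with strict inequality on a set of positive $\nu^+$-measure. Property (4) in the construction of $h_s$ gives $\|Dh_s(x)\|<1$ for every $x\neq x_s$, while property (2) gives $\|Dh_s(x_s)\|=\max(1-\eta(s),\lambda)\leq 1$, with equality only when $\eta(s)=0$. Compactness of $X$ and continuity of $Df_t$ then force $\|h_s\|_\infty<1$ whenever $\eta(s)>0$. Transcribing to the parameter $t$: on the arcs $I_i$ with $i\neq 1,m+1$ the map $f_t=f_i=R_\beta^{n_i}\circ g$ is a uniform contraction, so $\|\Psi(t,\cdot)\|<1$; on the complementary transition regions the value $\theta(t)$ lies in $(0,\tfrac12)\cup(\tfrac12,1)$ where $\eta>0$, giving $\|\Psi(t,\cdot)\|<1$ again; only on $I_1\cup I_{m+1}$, where $f_t$ equals one of the weak contractions $f_1,f_{m+1}$ whose fixed point carries eigenvalue $1$, can $\|\Psi(t,\cdot)\|$ equal $1$. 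The function $t\mapsto\log\|\Psi(t,\cdot)\|$ is continuous on $S^1$ (by continuity of $(t,x)\mapsto Df_t(x)$ on $S^1\times X$), hence bounded and in $L^1(\nu^+)$; being $\leq 0$ everywhere and strictly negative on the positive-$\nu^+$-measure set $S^1\setminus(I_1\cup I_{m+1})$, it integrates to $\bar\lambda<0$.

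The main obstacle is precisely the non-hyperbolicity of $f_1$ and $f_{m+1}$: their fixed points carry eigenvalue $1$, so $\|f_i\|_\infty=1$ and no uniform fiberwise contraction is available on the positive-measure arcs $I_1\cup I_{m+1}$. The argument succeeds only because $I_1\cup I_{m+1}$ is a proper sub-arc of $S^1$ while $\nu^+$ is equivalent to Lebesgue, so the Birkhoff time average absorbs these neutral parameters against the strictly contracting ones on the complement. No finer analysis along the invariant graph $\Gamma$ is needed at this step; the crude submultiplicative bound on the fiber operator norm suffices, and the stability assertion in the excerpt (robustness of negativity under $C^1$-small perturbations of $\Psi$) will then follow from the same integral estimate applied to $\widetilde\Psi$, since $\rho(\Psi(t,\cdot),\widetilde\Psi(t,\cdot))<\delta$ perturbs $\log\|\widetilde\Psi(t,\cdot)\|$ uniformly by at most $O(\delta)$.
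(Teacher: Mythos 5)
Your proof is correct, but it takes a genuinely different route from the paper's. You reduce everything to the one-step bound $\log\|\Psi(t,\cdot)\|\le 0$ for all $t$ (weak contractivity of $f,\widetilde f$ on the convex ball $X$ gives $\sup_x\|Df_t(x)\|\le 1$ on $I_1\cup I_{m+1}$; properties (2) and (4) of the arc $h_s$ together with compactness, and the uniform contraction of $f_i$ for $i\ne 1,m+1$, give strict negativity on a set of positive $\nu^+$-measure because the density of $\nu^+$ is bounded below), and then conclude $\lambda\le\int_{S^1}\log\|\Psi(t,\cdot)\|\,d\nu^+<0$ by submultiplicativity and Birkhoff, transferring the conclusion to $G_\Psi$ through the exact identity $g^n(\mathbf{t},\cdot)=\Psi^n(\pi(\mathbf{t}),\cdot)$ together with $\pi_*\nu=\nu^+$. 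The paper argues instead through the geometry of the attractor: using disjunctive base-$k$ expansions and the Normal Number Theorem it shows that for $\nu^+$-a.e.\ $t$ the coded point $\tau(\omega(t,k))$ lies in the interior of the global attractor $\Lambda$ (nonempty by Lemma \ref{lemmainterior}), where every fiber map is contracting, and it passes to $G_\Psi$ via the modified multiplicative ergodic theorem of \cite{BHN} and the semiconjugacy $\pi$. Your route is more elementary and self-contained, needing neither the nonempty interior of $\Lambda$, nor normal numbers, nor \cite[Prop.~2.2]{BHN}; what the paper's heavier route additionally produces are the tempered bounds on forward and backward compositions along the solenoid, which are reused later (e.g.\ in the perturbation analysis leading to Theorem \ref{thm44}) but are not needed for the lemma as stated. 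If you write this up, make two small steps explicit: that weak contractivity implies $\sup_{x\in X}\|Df(x)\|\le 1$ (difference quotients along segments of the convex ball $X$), and that the uniform contraction of $f_i=R_\beta^{n_i}\circ g$ is the paper's standing assumption on $g$ (which equals $f$ outside a small neighborhood of $x_0$) rather than something derived.
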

\begin{proof}
To prove the lemma first we show that the skew product system $F_\Psi$ admits negative maximal fiber Lyapunov exponent with respect to $\nu^+$.
The proof of the lemma is based on some well known facts of number theory.
A number $t \in [0,1]$ is called a $k$-rich if its expansion $\omega(t,k)$ in base $k$ is disjunctive.
It is known that $t$ is $k$-rich if and only if 
the set $\{k^nt \ (\textnormal{ mod } \ 1): n \in \mathbb{N} \}$ is dense
in the unit interval \cite{BY}, this means that the orbit of $t$ under
the mapping $\varphi$ is dense. By  Normal Number Theorem (\cite{FS}),
Lebesgue almost all real numbers are $k$-rich.
Let $t \in [0,1]$ with digit sequence expansion $\theta=\omega(t,k)$.
We claim that  $t$ is a rich number if and only if $\tau(\theta) \in \textnormal{int}(\Lambda)$,
where $\tau$ is the coding map defined by (\ref{e18}) and $\Lambda$ is the unique global attractor
of IFS $\Psi$. Note that by Lemma \ref{lemmainterior}, $\Lambda$ has nonempty interior.

Indeed, since $t$ is a rich number, its digit sequence expansion $\theta$ is disjunctive and
therefore the orbit of $\theta$ under the shift map $\varrho$ is dense in $\Sigma^+$.
By the continuity of $\tau$, the orbit of $\tau(\theta)$ is also dense in $\Lambda$. Now, by the way of contradiction,
assume that $\tau(\theta) \in \partial \Lambda$, the boundary of $\Lambda$. Since each $f_t$ is a homeomorphism, $f_t(\textnormal{int}(\Lambda)) \subset \textnormal{int}(\Lambda)$ or, equivalently,
$(f_t | _{\Lambda})^{-1}(\partial \Lambda) \subset \partial \Lambda$. This implies that the orbit of $\tau(\theta)$ completely lies in $\partial \Lambda$, which would mean that $\Lambda=\overline{\mathcal{O}(\tau(\theta), \Psi)} \subset \overline{\partial \Lambda}=\partial \Lambda$. This lead to the contradiction  $\textnormal{int}(\Lambda)=\emptyset$.

Notice that for all $t \in S^1$, the fiber map $f_t$ is contacting in $\textnormal{int}(\Lambda)$
These observations imply that for $\nu^+$-almost all points $t$,
the maximal fiber Lyapunov exponent of $t$ is negative.
Moreover, it is constant by ergodicity, we denote it by $\lambda$.

Now we prove that the maximal fiber Lyapunov exponent of $G_\Psi$ is also negative.
In fact, as a consequence of the modified Multiplicative Ergodic Theorem
(see \cite[Prop.~2.2]{BHN}), there exists $\varepsilon > 0$ (which is obtained by semi-conjugacy $\pi$) so that for $\nu$-almost every $\textbf{t} \in \mathcal{S}$ there exists a constant
$C(\textbf{t})$ such that
$$\|Df_{\xi^n(\textbf{t})}\ldots Df_{\textbf{t}}\|<C(\textbf{t})e^{(\lambda+\varepsilon)n},$$
$$\|Df_{\textbf{t}}\ldots Df_{\xi^{-n}(\textbf{t})}\|<C(\textbf{t})e^{(\lambda+\varepsilon)n},$$
for all $n \geq 0$, see also \cite[Section.~2]{BHN}.
This fact terminates the proof.
  \end{proof}
\subsection{Ergodic Properties of the Skew Products}
In this subsection we study the ergodic properties of $G_\Psi$. To start, we need some preliminary definition.  Let $(X; \mathcal{B}; \nu; T)$ be a measure preserving dynamical system. If $T$ is invertible then, the system is  {\it Bernoulli} if it is isomorphic to a Bernoulli shift. For  non-invertible cace, being Bernoulli means that the natural extension to the inverse limit space is isomorphic to a one-sided Bernoulli shift. $T$ is mixing (or strong mixing) if
$$\nu(T^{-n}(A) \cap B)\to \nu(A)\nu(B), \ as \ n \to +\infty,$$
for every $A,B \in \mathcal{B}$.

The definition of a SRB measure \cite{Pa} is expressed in rather simple terms, but its content is quite meaningful in describing
the dynamics of attractors when it's possible to show that they carry SRB-measures.

Let $A$ be an attractor for $f$, i.e. there is a set of points in the phase space with positive
Lebesgue probability whose future orbits tend to $A$, as the number of iterates tends to infinite.
The set of orbits attracted to $A$ in the future is called its basin and denoted by $B(A)$. Let $\mu$ be an $f$-invariant probability measure on $A$.
Following in \cite{Pa}, $\mu$ is called a \emph{SRB (Sinai-Ruelle-Bowen) measure} for $(f,A)$ if we have for any continuous map $g$, that
$$\lim_{n \to \infty}\frac{1}{n}\sum_{i=0}^{n-1}g(f^i(x))=\int g d \mu, $$
for all $x \in E \subset B(A)$, with $m(E) > 0$, where $m$ denotes Lebesgue measure.
It's common to have $E$ with total probability in $B(A)$ and so, in such a case, the above
convergence holds for a typical trajectory attracted to $A$ in the future.

One of the most important results which we shall use is the following theorem.
\begin{theorem}(\cite[Thm.~5]{C2}, \cite[Thm.~1.4]{St2} and \cite[Pro.~2.3]{BHN})\label{thm000}
Let $X$ be a compact ball of $\mathbb{R}^n$, $S$ a compact metric space equipped with a Borel measure $\nu$ and $h:S \to S$ is a continuous map ergodic with respect to $\nu$. Also let $F:S \times X \to S \times X$, $F(t,x)=(h(t),f(t,x))$ be a skew product on $S \times X$ and for each $t\in S$, $f(t,.)\in \textnormal{Diff}^2(X)$. Assume $h$ is invertible and the skew product $F$ admits a negative fiber Lyapunov exponent. Then there is a measurable function $\gamma:S \to X$ such that $\Gamma$ the graph of $\gamma$ is invariant under $F$. The graph $\Gamma$ (which may not be compact) is the support of an invariant measure $\mu$ and $(F,\Gamma,\mu)$ is ergodic (strong mixing, Bernoulli) if $(h,S,\nu)$ is ergodic (strong mixing, Bernoulli). Furthermore $\Gamma$ is attracting in the sense that for $\nu \ a.e. \ t\in S$, $\lim_{n \to \infty}|\pi_x(F^n(t,x))-\pi_x \Gamma(t)|=0$ for every $x \in X$, where $\pi_x$ is the natural projection from $S \times X$ to $X$.
\end{theorem}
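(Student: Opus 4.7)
The plan is to construct $\gamma$ by a pullback (inverse-time) argument, then transfer ergodic properties from the base to the graph through the natural projection. First I would fix some $x_0 \in X$ and, for each $t \in S$, study the sequence
\[
\gamma_n(t) := f(h^{-1}(t),\cdot) \circ f(h^{-2}(t),\cdot) \circ \cdots \circ f(h^{-n}(t),\cdot)(x_0).
\]
Because $h$ is invertible and $F$ has negative maximal fiber Lyapunov exponent $\lambda<0$, Oseledec's theorem applied to the cocycle $(t,n) \mapsto Df(h^{n-1}(t),\cdot)\cdots Df(t,\cdot)$ yields, for $\nu$-a.e.\ $t$, a tempered function $C(t)>0$ and some $\varepsilon$ with $\lambda+\varepsilon<0$ such that the contraction bound
\[
\|Df(h^{-1}(t),\cdot)\cdots Df(h^{-n}(t),\cdot)\|_{C^0(X)} \le C(t)\,e^{(\lambda+\varepsilon)n}
\]
holds for all $n\ge 0$. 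A standard Cauchy argument then shows $\{\gamma_n(t)\}$ converges uniformly in $x_0 \in X$, and the limit $\gamma(t)$ is independent of $x_0$; measurability of $\gamma$ follows from being a pointwise limit of continuous-in-$t$ functions (for each $n$). The identity $\gamma_{n+1}(h(t)) = f(t,\gamma_n(t))$ passes to the limit and gives $F(t,\gamma(t))=(h(t),\gamma(h(t)))$, so $\Gamma := \{(t,\gamma(t)) : t \in S\}$ is $F$-invariant.

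Next I would produce the invariant measure. The map $\iota:S\to S\times X$, $\iota(t):=(t,\gamma(t))$, is measurable and satisfies $F\circ \iota = \iota \circ h$, so I set $\mu := \iota_*\nu$. Then $\mu$ is $F$-invariant, supported on $\Gamma$, and the factor map $\pi_S:\Gamma \to S$ is a measure-theoretic isomorphism between $(F,\Gamma,\mu)$ and $(h,S,\nu)$, since $\iota$ is its measurable inverse $\nu$-a.e. Since isomorphism of measure-preserving systems preserves ergodicity, strong mixing, and the Bernoulli property (together with the characterisation of the Bernoulli property for non-invertible systems via the natural extension), all three statements about $(F,\Gamma,\mu)$ follow immediately from the assumed properties of $(h,S,\nu)$.

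Finally, I would establish the attracting property. Given $x \in X$ and a generic $t$, write
\[
\pi_x F^n(t,x) - \pi_x F^n(t,\gamma(t)) = f(h^{n-1}(t),\cdot)\circ \cdots \circ f(t,\cdot)(x) - f(h^{n-1}(t),\cdot)\circ \cdots \circ f(t,\cdot)(\gamma(t)).
\]
Applying the mean value estimate along any $C^2$ arc in $X$ joining $x$ and $\gamma(t)$, together with the forward version of the contraction bound guaranteed by $\lambda<0$ (again a consequence of the modified multiplicative ergodic theorem applied on the natural extension and pushed down via the semiconjugacy $\pi$), the right-hand side is dominated by $C(t)\,\mathrm{diam}(X)\,e^{(\lambda+\varepsilon)n}\to 0$. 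Since $\pi_x F^n(t,\gamma(t)) = \gamma(h^n(t)) = \pi_x\Gamma(h^n(t))$, rewriting in the form stated in the theorem gives the asserted convergence for every $x \in X$ and $\nu$-a.e.\ $t$.

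The main obstacle is the first step: securing the uniform-in-$x_0$ Cauchy estimate from the abstract hypothesis of a negative fiber Lyapunov exponent. One must be careful that the exponent controls the cocycle norm only $\nu$-a.e., so the pullback construction defines $\gamma$ only on a full-measure set, and the temperedness of $C(t)$ must be used to pass from infinitesimal contraction to contraction of distances. Everything else, including the ergodic-property transfer and the attracting statement, is a clean consequence once the pullback graph is in hand.
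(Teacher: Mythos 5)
Your pullback construction, the transfer of ergodicity/mixing/Bernoullicity through the isomorphism $\iota(t)=(t,\gamma(t))$, and the forward contraction estimate for the attraction statement are correct, and they follow essentially the same route as the source of this theorem: the paper itself does not prove Theorem \ref{thm000} but quotes it from \cite{C2}, \cite{St2} and \cite{BHN}, and it reproduces exactly this pullback Cauchy argument (via $\psi_n(\textbf{t},x)=\widetilde{g}^n(\xi^{-n}(\textbf{t}),x)$ and the tempered bound $\|\psi_n(\textbf{t},\cdot)\|\le C(\textbf{t})e^{(\lambda+\varepsilon)n}$) in its proof of Theorem \ref{thm44}. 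Only minor remark: since $h$ is already assumed invertible here, your appeal to a natural extension and the semiconjugacy $\pi$ is unnecessary — the modified multiplicative ergodic theorem (\cite[Prop.~2.2]{BHN}) applied directly to the backward cocycle suffices.
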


Now, we summarize some basic ergodic properties of the map $G_\Psi$ in the following theorem.
\begin{theorem}\label{thm22}
Take the skew product $G_\Psi$ defined by $\textnormal{(\ref{e66})}$ with the maximal attractor $\Delta_G$.
Then the following statements hold:
\begin{enumerate}
 \item There exist a continuous function $\gamma:\mathcal{S} \to X$ such that $\Gamma$ the graph of $\gamma$ coincides with the unique maximal attractor $\Delta_G$ of $G_\Psi$. Moreover, $\Gamma$ is invariant under $G_\Psi$, that is for each $\textbf{t} \in \mathcal{S}$, $G_\Psi(\textbf{t},\gamma(\textbf{t}))=(\xi(\textbf{t}), \gamma(\xi(\textbf{t})))$.
  \item $G_\Psi$ admits a negative maximal Lyapunov exponent.
    \item The graph $\Gamma$ is the support of an ergodic invariant measure $\mu$.
  \item $(G_\Psi, \Gamma, \mu)$ is Bernoulli and therefore it is mixing.
  \item $\mu$ is an SRB measure.
\end{enumerate}
  \end{theorem}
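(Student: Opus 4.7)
The plan is to combine three ingredients assembled earlier in the paper: the explicit description of $\Delta_G$ as a continuous invariant graph, the negativity of the maximal fiber Lyapunov exponent, and the Crauel/Stark/Bonatti--Homburg--Nassiri transfer theorem recorded as Theorem~\ref{thm000}.

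Statement (1) is precisely Proposition~\ref{pro1}: the limit $\gamma(\textbf{t}):=\lim_{n\to\infty} f_{\textbf{t}}\circ f_{\xi^{-1}(\textbf{t})}\circ\cdots\circ f_{\xi^{-n}(\textbf{t})}(0)$ exists independently of the seed point in $X$ by weak hyperbolicity, is continuous by Lemma~\ref{lem34}, and the identification $\mathrm{Graph}(\gamma)=\bigcap_{n\geq 0} G_\Psi^n(\mathcal{S}\times X)=\Delta_G$ together with $G_\Psi$-invariance is carried out in the proof of Proposition~\ref{pro1}. Statement (2) is a direct application of Lemma~\ref{neg-exponents}.

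For statements (3) and (4) I would invoke Theorem~\ref{thm000} with $S=\mathcal{S}$, $h=\xi$, $f(\textbf{t},\cdot)=f_{\textbf{t}}$ and measure $\nu$. The hypotheses must be verified: $\xi$ is a homeomorphism of the solenoid since it is by construction the natural extension of $\varphi$; each $f_{\textbf{t}}$ is a $C^{2}$-diffeomorphism onto its image by the construction of $\Psi$; and the fiber Lyapunov exponent is negative by (2). It remains to note that $(\xi,\mathcal{S},\nu)$ is Bernoulli: the smooth expanding endomorphism $\varphi$ together with its unique absolutely continuous invariant measure $\nu^+$ is a Bernoulli system (classical for $C^{2}$-expanding circle maps), and Bernoullicity is inherited by the natural invertible extension. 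Theorem~\ref{thm000} then delivers an ergodic invariant probability measure $\mu$ supported on $\Gamma$ for which $(G_\Psi,\Gamma,\mu)$ is Bernoulli and hence strong mixing.

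The main obstacle is statement (5), since $\Gamma$ is a null set for the natural reference measure $\nu\times m_X$ on $\mathcal{S}\times X$ (with $m_X$ Lebesgue measure on $X$). The approach is to exploit the last clause of Theorem~\ref{thm000}: for $\nu$-a.e.\ $\textbf{t}\in\mathcal{S}$ and every $x\in X$, the forward orbit asymptotically tracks the invariant graph, i.e. $|\pi_x(G_\Psi^n(\textbf{t},x))-\gamma(\xi^n(\textbf{t}))|\to 0$. Given a continuous observable $g$ on the compact set $\mathcal{S}\times X$ (hence uniformly continuous), Birkhoff's theorem applied to the ergodic system $(G_\Psi,\Gamma,\mu)$ yields a $\mu$-conull set of graph points $(\textbf{t},\gamma(\textbf{t}))$ at which the time averages of $g$ converge to $\int g\,d\mu$. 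The shadowing estimate together with uniform continuity of $g$ then allows one to replace $\gamma(\xi^n(\textbf{t}))$ by $\pi_x(G_\Psi^n(\textbf{t},x))$ in the Birkhoff sums with vanishing Ces\`aro error, so the time averages converge to $\int g\,d\mu$ at every $(\textbf{t},x)$ lying over a full-$\nu$-measure set of bases; a Fubini argument concludes that this holds for $\nu\times m_X$-a.e. initial condition, which is the SRB property.
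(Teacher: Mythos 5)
Your proposal is correct and follows essentially the same route as the paper: (1) and (2) are quoted from Proposition~\ref{pro1} and Lemma~\ref{neg-exponents}, (3)--(4) come from Theorem~\ref{thm000} applied to the invertible base $(\xi,\mathcal{S},\nu)$ together with Bernoullicity of the natural extension of the expanding circle map, and (5) is obtained from attraction to the graph plus an ergodic-theorem argument. The only cosmetic difference is in (5): the paper transports the Birkhoff average to the base through the projection isomorphism $p|_{\Delta_G}$ and invokes ergodicity of $\nu$, whereas you apply Birkhoff directly to $(G_\Psi,\Gamma,\mu)$ and add a Fubini remark -- these are equivalent since $p_*\mu=\nu$.
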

\begin{proof}
The statement $(1)$ follows by Proposition \ref{pro1}. The second statement follows from Lemma \ref{neg-exponents}. The base map $\xi$ is invertible hence we can apply Theorem \ref{thm000} to conclude that
the graph $\Gamma$ is the support of an invariant ergodic measure $\mu$ (for more details see \cite{BHN}), hence the statement $(3)$ is satisfied. To verify the statement $(4)$, we observe that the map $\varphi, \ \varphi(t)=kt \ (\textnormal{mod \ 1})$, is a $C^2$-expanding map of the circle preserving the Lebesgue measure. Therefore its natural extension $\xi$ is Bernoulli (see \cite{W, Q}). Now, by Theorem \ref{thm000}, $(G_\Psi, \Gamma, \mu)$ is also Bernoulli and so mixing.

It remains to show that $\mu$ is an SRB measure. To this end, we show that for all $\textbf{t}\in \mathcal{S}$ and all $x\in X$
and any continuous function $h \in C(\mathcal{S}\times X)$,
\begin{equation}\label{e55}
  \lim_{k \to +\infty} \frac{1}{k}\sum_{i=0}^{k-1}h(G_\Psi^i(\textbf{t},x))=\int h d\mu.
\end{equation}
By Lemma \ref{neg-exponents}
$$\textnormal{dist}(G_\Psi^k(\textbf{t},x), G_\Psi^k(\textbf{t},\gamma(\textbf{t})))\to 0, $$
as $k \to +\infty$, uniformly in $\textbf{t}$ and in $x$. By the continuity of $h$,
$$h(G_\Psi^k(\textbf{t},x))- h(G_\Psi^k(\textbf{t},\gamma(\textbf{t})))\to 0.$$
Therefore to prove (\ref{e55}), it is enough to prove it for $x=\gamma(\textbf{t})$, i.e.
\begin{equation}\label{e77}
  \lim_{k \to +\infty} \frac{1}{k}\sum_{i=0}^{k-1}h(G_\Psi^i(\textbf{t},\gamma(\textbf{t})))=\int h d\mu.
\end{equation}
Since the projection $p : \Delta_G \to \mathcal{S}$, $(\textbf{t},x)\mapsto \textbf{t}$ is an isomorphism, the function
$\widetilde{h}=h \circ (p|_{\Delta_G})^{-1}$ is continuous on $\mathcal{S}$. Therefore, (\ref{e77}) is equivalent to
\begin{equation*}\label{e78}
  \lim_{k \to +\infty} \frac{1}{k}\sum_{i=0}^{k-1}\widetilde{h}(\xi^i(\textbf{t})=\int_{\mathcal{S}} \widetilde{h} d \nu
\end{equation*}
for almost all $\textbf{t}$. This statement is just the ergodicity of $\nu$.
\end{proof}
Consider the projection $\mu_{F}=(\pi \times id)_{\ast}\mu$, where
$\pi: \mathcal{S} \to S^1$ is defined by (\ref{e99}). Since $\mu$ is an invariant ergodic measure
 with $\textnormal{supp}(\mu)=\Gamma$ and by \cite[Pro.~3.5]{VY}, $\mu_F$ is also an invariant ergodic measure for $F_\Psi$. Also, by the construction, clearly $(\pi \times id)\Gamma$ is a global attractor for $F_\Psi$ with the basin $S^1 \times X$. These observation and the argument used in the statement $(5)$ of the previous theorem ensure that $\mu_F$ is a SRB measure of $F_\Psi$.
%
\begin{proposition}
The following holds:
\begin{equation*}\label{e101}
S^1 \times \textnormal{Cl}(B) \subset (\pi \times id)\Gamma.
\end{equation*}
\end{proposition}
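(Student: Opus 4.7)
The goal is to produce, for each $s\in S^{1}$ and each $x\in\textnormal{Cl}(B)$, a sequence $\textbf{t}=(\ldots,t_{-2},t_{-1},t_{0})\in\mathcal{S}$ with $\pi(\textbf{t})=t_{0}=s$ and $\gamma(\textbf{t})=x$. My plan is to build the backward coordinates $t_{-1},t_{-2},\ldots$ recursively, at each step combining the covering property $\textnormal{Cl}(B)\subset\bigcup_{i=1}^{2m}f_{i}(B)$ (Theorem~\ref{thm1} and Lemma~\ref{lemmainterior}) with the following base-map observation: since $\varphi$ has degree $k=4m$ and each arc $I_{i}$ has length $1/k$, the restriction $\varphi|_{I_{i}}\colon I_{i}\to S^{1}$ is a bijection; hence for every $s\in S^{1}$ and every $i\in\{1,\ldots,2m\}$ there is a unique preimage $\tau_{i}(s)\in\varphi^{-1}(s)\cap I_{i}$, and $f_{\tau_{i}(s)}=h_{\theta(\tau_{i}(s))}=h_{t_{i}}=f_{i}$, by property~(5) of $H$ together with $\theta|_{I_{i}}\equiv t_{i}$.

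Fix $s$ and $x$ as above, put $t_{0}:=s$ and $y_{0}:=x$, and suppose inductively that $t_{-1},\ldots,t_{-(n-1)}$ and $y_{1},\ldots,y_{n-1}\in\textnormal{Cl}(B)$ have been constructed so that $\varphi(t_{-j})=t_{-(j-1)}$ and $y_{j-1}=f_{t_{-j}}(y_{j})$ for $1\leq j\leq n-1$. Applying the covering property to $y_{n-1}$ yields $i_{n}\in\{1,\ldots,2m\}$ and $y_{n}\in B\subset\textnormal{Cl}(B)$ with $y_{n-1}=f_{i_{n}}(y_{n})$; set $t_{-n}:=\tau_{i_{n}}(t_{-(n-1)})$. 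Then $\varphi(t_{-n})=t_{-(n-1)}$ and $f_{t_{-n}}=f_{i_{n}}$, so $y_{n-1}=f_{t_{-n}}(y_{n})$, closing the induction. Unfolding the recursion gives
\begin{equation*}
x=y_{0}=f_{t_{-1}}\circ f_{t_{-2}}\circ\cdots\circ f_{t_{-n}}(y_{n})\in f_{t_{-1}}\circ f_{t_{-2}}\circ\cdots\circ f_{t_{-n}}(X)
\end{equation*}
for every $n\geq 1$, and $\textbf{t}:=(\ldots,t_{-2},t_{-1},t_{0})\in\mathcal{S}$.

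Because the iterated function system $\Psi$ is weakly hyperbolic, $\textnormal{diam}(f_{t_{-1}}\circ\cdots\circ f_{t_{-n}}(X))\to 0$ as $n\to\infty$. A direct computation of the forward iterates gives $G_{\Psi}^{n}(\mathcal{S}\times X)\cap(\{\textbf{t}\}\times X)=\{\textbf{t}\}\times f_{t_{-1}}\circ\cdots\circ f_{t_{-n}}(X)$, so the fiber of the maximal attractor $\Delta_{G}=\Gamma$ over $\textbf{t}$ equals the nested intersection $\bigcap_{n\geq 1}f_{t_{-1}}\circ\cdots\circ f_{t_{-n}}(X)$, which is the singleton $\{\gamma(\textbf{t})\}$. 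Since $x$ belongs to every set of this intersection, $\gamma(\textbf{t})=x$, and therefore $(s,x)=(\pi(\textbf{t}),\gamma(\textbf{t}))\in(\pi\times id)\Gamma$, as required. I expect the only delicate point to be matching, at each inductive step, the $\varphi$-preimage condition with the covering condition; this is resolved cleanly because each $\varphi|_{I_{i}}$ is a homeomorphism onto $S^{1}$, so given the covering index $i_{n}$ the preimage $t_{-n}=\tau_{i_{n}}(t_{-(n-1)})$ is uniquely determined.
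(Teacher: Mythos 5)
Your proof is correct, but it takes a genuinely different route from the paper. You realize each point $(s,x)\in S^1\times\textnormal{Cl}(B)$ exactly as a point of the attractor: using the covering property $\textnormal{Cl}(B)\subset\bigcup_i f_i(B)$ together with the fact that each arc $I_i$ contains a $\varphi$-preimage of any given base point (so that the fiber map over that preimage is exactly $f_i$), you build a backward orbit $\textbf{t}\in\mathcal{S}$ over $s$ for which $x$ lies in every nested image $f_{t_{-1}}\circ\cdots\circ f_{t_{-n}}(X)$, and weak hyperbolicity collapses this intersection to the single attractor point over $\textbf{t}$. The paper instead fixes a neighborhood $U$ of the fiber images on which all $f_t$ contract with a uniform rate $\lambda<1$ and runs an inductive ball-covering argument (following Volk): the fiber $\Gamma_t\cap B_t$ is shown to be $\lambda^{n-1}\varepsilon$-dense in $B_t$ for every $n$, and the inclusion then follows from density plus closedness of the projected attractor. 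Your argument buys two things: it avoids the uniform contraction rate (a delicate point here, since $f_1$ and $f_{m+1}$ are only weakly contractive), relying only on the already-established weak hyperbolicity of $\Psi$; and it gives exact membership in $(\pi\times id)\Delta_G$ rather than density followed by a closure step. Two harmless imprecisions: the $\varphi$-preimage in $I_i$ is unique except when it is the common image of the two endpoints of the arc (only existence is needed, so this does not matter), and your description of the attractor fiber as $\bigcap_n f_{t_{-1}}\circ\cdots\circ f_{t_{-n}}(X)$ differs by one index from the paper's formula $f_{\textbf{t}}\circ f_{\xi^{-1}(\textbf{t})}\circ\cdots\circ f_{\xi^{-n}(\textbf{t})}(X)$; since the set $\Delta_G=\Gamma$ is the same and only its fiberwise parametrization shifts by $\xi$, the conclusion $S^1\times\textnormal{Cl}(B)\subset(\pi\times id)\Gamma$ is unaffected.
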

\begin{proof}
Take an open set $U \subset X$ so that $$\textnormal{Cl}(B) \subset \bigcup_{i=1}^{2m} f_{i}(B) \subset \bigcup_{t \in S^1} f_{t}(B) \subset U $$
and each $f_{t}$ is contracting on $\textnormal{Cl}(U)$, the first inclusion given by (\ref{e17}).
Notice that for each $t \in S^1 \setminus \cup_{i=1}^{2m}I_i$, the fiber maps $f_t$ are uniformly contracting which ensure that for all $t \in S^1$ the map $f_t$ is contracting on $\textnormal{Cl}(U)$. Therefore we can take a uniform upper rate of contraction of the fiber maps in $\textnormal{Cl}(U)$
$$ \lambda = \sup_{t \in S^1, \ x \in \textnormal{Cl}(U)}\|Df_t(x)\|.$$
For each $t \in S^1$, we denote the fibers $X_t$, $U_t$ and $B_t$
$$X_t:=\{t\}\times X, \ U_t:=\{t\}\times U \ \textnormal{and} \ B_t:= \{t\}\times \textnormal{Cl}(B),$$
and we take $\Gamma_t:=X_t \cap (\pi \times id)\Gamma$. By Theorem \ref{thm22}, there exists a continuous function $\gamma : \mathcal{S} \to X$ such that $\Gamma$ the graph of $\gamma$ coincides with the maximal attractor $\Delta_G$ of $G_\Psi$ and it is invariant under $G_\Psi$.

Now, we claim that for any $t \in S$ and $n \in \mathbb{N}$, there exists a finite covering of $B_t$
by balls of radius $\lambda^{n-1} \varepsilon$ whose centers
lie in the set $\Gamma_t$, where $\varepsilon=\textnormal{diam}(B)$. As $n$ is arbitrary, the claim proves the density of $\Gamma_t$ in $B_t$ for any $t \in S$ and thus the density of
$(\pi \times id)\Gamma$ in $S^1 \times \textnormal{Cl}(B)$. Finally, the fact
that $(\pi \times id)\Delta_G$ has to be a closed set finishes the proof of the Proposition.

The proof of the claim is analogous to \cite[Pro.~7.1]{V} and it will be carried out by induction.
Indeed, for the case $n=1$ we observe that any point from the non-empty set $\Gamma_t$ suffices as the center of a single
ball of radius $\textnormal{diam}(B)$, which clearly contains $B_t$.

Now assume that for some $n \in \mathbb{N}$ and for any $t \in S^1$, there exists a finite covering of
$B_t$ by the balls of the radius $\lambda^{n-1}(\textnormal{diam}(B))$ with centers in the set $\Gamma_t$. Denote by $O_t$ the set
of all centers of the balls.

Take any $t^{\prime} \in S^1$. There exist $2m$ preimages of $t^{\prime}$, $t_i^{\prime}$, $i=1, \ldots, 2m,$ within $I_i$, respectively:
$$\varphi(t_i^{\prime})= t^{\prime}, \ t_i^{\prime}\in I_i, \ i=1, \ldots, 2m.$$
Because of the invariance of $\Delta_G$ and $(\pi \times id)\Delta_G$, we have
$$\Gamma_{t^{\prime}}=\bigcup_{t \in \varphi^{-1}(t^{\prime})}f_t(\Gamma_t)\supset \bigcup_{i=1}^{2m}f_{t_i^{\prime}}(\Gamma_{t_i^{\prime}}). $$
We know that every $f_t$ contracts $U_t$ with the uniform upper rate of $\lambda < 1$. This
observation, combined with the induction assumption, gives us that the balls of radius $\lambda^{n}(\textnormal{diam}(B))$ with centers in $f_t(O_t)$ constitute a covering of the regions $f_t(B_t) \subset U_t$, $t \in \varphi^{-1}(t^{\prime})$. Now note that the maps $f_{t_i^{\prime}}$ are equal to the maps $f_i$ originally defined by (\ref{e44}), and therefore
$$\textnormal{Cl}(B)\subset \bigcup_{i=1}^{2m}f_{t_i^{\prime}}(B).$$
Thus the balls of radius $\lambda^{n}(\textnormal{diam}(B))$ with the centers in $\cup_{i=1}^{2m}f_{t_i^{\prime}}(O_{t_i^{\prime}})$
cover the whole region $B$. So the induction step is complete.
\end{proof}
Notice that we can apply the argument used in \cite[Section~8]{V} to deduce the existence of an open neighborhood $U$ of $\textnormal{supp}(\mu_F)$ satisfies the following:
\begin{equation}\label{e444}
\textnormal{supp}(\mu_F)=\bigcap_{n\geq 0}F_\Psi^n(U).
\end{equation}
By this fact, the previous proposition and \cite[Corollary~2.4]{BHN}, analogous to Theorem \ref{thm22} we get the next result.
\begin{theorem}
 Take the skew product $F_\Psi$ defined by $\textnormal{(\ref{e16})}$. Then $F_\Psi$ admits an attractor $\Delta$ that satisfies the following properties:
\begin{enumerate}
  \item $\Delta$ has nonempty interior and it is the support of an ergodic invariant measure $\mu_F$.
  \item $\mu_F$ is an SRB measure.
  \item $(F_\Psi, \Delta, \mu_F)$ is Bernoulli and therefore it is mixing.
  \end{enumerate}
\end{theorem}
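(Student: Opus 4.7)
My plan is to obtain all three conclusions by transferring the corresponding facts for the natural extension $G_\Psi$ (collected in Theorem \ref{thm22}) through the semiconjugacy $\pi \times id : \mathcal{S} \times X \to S^1 \times X$, using the preceding proposition and equation (\ref{e444}) to pin down the geometric and topological side. I would set $\Delta := (\pi \times id)(\Gamma)$, where $\Gamma$ is the invariant graph furnished by Theorem \ref{thm22}(1); then automatically $\textnormal{supp}(\mu_F) = \Delta$ since $\mu_F = (\pi \times id)_\ast \mu$ and $\textnormal{supp}(\mu) = \Gamma$. Ergodicity of $\mu_F$ is inherited from that of $\mu$ via the semiconjugacy (as already noted in the discussion following Theorem \ref{thm22} citing \cite[Pro.~3.5]{VY}), and the previous proposition delivers $S^1 \times \textnormal{Cl}(B) \subset \Delta$, so $\Delta$ has nonempty interior since $B$ does. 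This establishes (1).

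For (2), given any continuous $g : S^1 \times X \to \mathbb{R}$, I would lift it to $\widetilde g := g \circ (\pi \times id)$ on $\mathcal{S} \times X$ and exploit the intertwining $(\pi \times id) \circ G_\Psi = F_\Psi \circ (\pi \times id)$. This implies that the Birkhoff sums of $g$ under $F_\Psi$ at $(t, x)$ coincide with those of $\widetilde g$ under $G_\Psi$ at any $(\textbf{t}, x)$ with $\pi(\textbf{t}) = t$. By Theorem \ref{thm22}(5) the latter sums converge to $\int \widetilde g\, d\mu = \int g\, d\mu_F$ for every $(\textbf{t}, x)$, the proof there having used only the uniform attraction to $\Gamma$ supplied by Lemma \ref{neg-exponents} together with the ergodicity of $\nu$. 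Consequently the $F_\Psi$-averages converge pointwise for all $(t, x) \in S^1 \times X$, and in particular Lebesgue-almost everywhere (the equivalence of $\nu^+$ with Lebesgue on $S^1$ due to $\varphi$ being a $C^2$ expanding endomorphism). Combined with (\ref{e444}), which certifies that $\Delta$ is the maximal attractor in a neighbourhood $U$, this yields the SRB property.

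For (3), the key point is that $G_\Psi$ is the natural extension of $F_\Psi$: indeed $\xi$ is by construction the natural extension of $\varphi$ on the solenoid, and the fibre map $f_{\textbf{t}} = f_{t_0}$ depends only on the present coordinate $t_0 = \pi(\textbf{t})$, so the inverse-limit construction applied to $F_\Psi$ reproduces $(G_\Psi, \mathcal{S} \times X, \mu)$, in full agreement with Lemma \ref{L1.1} and \cite[Corollary~2.4]{BHN}. Since $(G_\Psi, \Gamma, \mu)$ is Bernoulli by Theorem \ref{thm22}(4), the definition of ``Bernoulli'' for non-invertible systems recalled at the start of the subsection forces $(F_\Psi, \Delta, \mu_F)$ to be Bernoulli, hence mixing.

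The main obstacle, such as it is, lies in item (3): one has to verify that $G_\Psi$ really is the natural extension of $F_\Psi$ (not merely an ergodic-theoretic factor) so that inheriting the Bernoulli property is legitimate. This is handled by the inverse-limit construction used to define $\xi$ together with Lemma \ref{L1.1} and \cite[Corollary~2.4]{BHN}, so no fresh argument is required. Everything else reduces to a routine push-down of ergodic-theoretic data along the semiconjugacy $\pi \times id$.
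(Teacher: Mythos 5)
Your proposal follows essentially the same route as the paper: push the measure and the graph down through $\pi \times id$, use \cite[Pro.~3.5]{VY} for ergodicity, the preceding proposition for $S^1 \times \textnormal{Cl}(B) \subset (\pi \times id)\Gamma$ (hence nonempty interior), equation (\ref{e444}) for the attractor structure, the argument of Theorem \ref{thm22}(5) for the SRB property, and \cite[Corollary~2.4]{BHN} together with the natural-extension identification for the Bernoulli property. Your write-up is somewhat more explicit than the paper's (which states these transfers in a few lines and omits a formal proof), but no genuinely different idea is involved.
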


\section{Proof of the main results}
In this section, we assume that $\widetilde{F}$ is a small perturbation of $F_\Psi$ with respect to the metric defined by (\ref{e000}) of the following form
\begin{equation*}\label{e333}
\widetilde{F}:S^1 \times X \to S^1 \times X, \ \widetilde{F}(t,x)=(\varphi(t),\widetilde{f}(t,x))=(\varphi(t),\widetilde{f}_t(x))
\end{equation*}
and
 $\widetilde{G}$ is the extension of $\widetilde{F}$ that is defined by
 \begin{equation*}
  \widetilde{G}:\mathcal{S} \times X \to \mathcal{S} \times X, \ \widetilde{G}(\textbf{t}, x)=(\xi(\textbf{t}), \widetilde{g}(\textbf{t},x)).
 \end{equation*}
Here, we generalize the concept of a bony graph \cite{KV} to our setting. We say that a closed invariant set of the skew product $\widetilde{G}$ is a {\it bony graph}  if it intersects almost every fiber (w.r.t. $\nu$) at a single point, and any other fiber at a compact connected set which is called a bone. A bony graph can be represented as a disjoint union of two sets, $K$ and $\widetilde{\Gamma}$, where $K$ denotes the union of the
bones. The projection of $K$ by the natural projection map $p$ to the base has zero measure, while $\widetilde{\Gamma}$ is the graph of some
measurable function $\widetilde{\gamma} : \mathcal{S} \setminus p(K) \to X$.
By Fubini's Theorem, the standard measure of a bony graph is zero.
Let $\widetilde{\Delta}$ be the maximal attractor of $\widetilde{G}$. Take $X_\textbf{t}:=\{\textbf{t}\}\times X$. By definition
\begin{equation*}\label{e23}
\widetilde{\Delta}_\textbf{t}:=\widetilde{\Delta} \cap X_\textbf{t}=\bigcap_{n\geq 0} X(\textbf{t},n), \ \textnormal{where} \ X(\textbf{t},n)=\widetilde{f}_\textbf{t} \circ \widetilde{f}_{\xi^{-1}(\textbf{t})}\circ \ldots \circ \widetilde{f}_{\xi^{-n}(\textbf{t})}(X).
\end{equation*}
Let the maximal attractor $\widetilde{\Delta}$ be a bony graph. Then we say that $\widetilde{\Delta}$ is a \emph{continuous-bony graph} (CBG) if $\widetilde{\Delta}_\textbf{t}$ is upper semicontinuous:
\begin{equation*}\label{e25}
  \forall \textbf{t} \forall \varepsilon>0 \ \exists \delta >0 \ \ \textnormal{such \ that} \ \ \textnormal{dist}(\textbf{t}, \textbf{t}^{\prime})< \delta
  \ \Longrightarrow \widetilde{\Delta}_{\textbf{t}^{\prime}}\subset U_\varepsilon(\widetilde{\Delta}_\textbf{t}),
\end{equation*}
where $U_\varepsilon(\widetilde{\Delta}_\textbf{t}):=\bigcup_{a\in \widetilde{\Delta}_\textbf{t}} B_\varepsilon (a)$ and $B_\varepsilon(a)$ is a ball with center $a$ and radius $\varepsilon$. In other words, $U_\varepsilon(\widetilde{\Delta}_\textbf{t})$ is the $\varepsilon$-neighborhood of $\widetilde{\Delta}_\textbf{t}$ in the Hausdorff metric.
In particular, the graph part $\widetilde{\Gamma}$ is a graph of a function which is continuous on its
domain.

\begin{theorem}\label{thm44}
 Take the skew product $\widetilde{G}$ which is a small perturbation of $G_\Psi$ with the maximal attractor $\widetilde{\Delta}$.
Then the following statements hold:
\begin{enumerate}
  \item $\widetilde{\Delta}$ is a continuous bony graph. There exists an upper semicontinuous function $\widetilde{\gamma}:\mathcal{S} \to X$
  so that $\widetilde{\Gamma}$, the graph of $\widetilde{\gamma}$ is invariant under $\widetilde{G}$.
  \item $\widetilde{G}$ admits a negative maximal Lyapunov exponent.
    \item The graph $\widetilde{\Gamma}$ is the support of an ergodic invariant measure $\widetilde{\mu}$.
  \item $(\widetilde{G}, \widetilde{\Gamma}, \widetilde{\mu})$ is Bernoulli and therefore it is mixing.
  \item $\widetilde{\mu}$ is an SRB measure.
\end{enumerate}
  \end{theorem}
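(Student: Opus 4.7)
The plan is to derive the five conclusions by upgrading the unperturbed analysis (Theorem \ref{thm22}) to the perturbed setting, using as the main engines (a) robustness of the negative fiber Lyapunov exponent under the metric \eqref{e000}, (b) the graph-transform / invariant-graph machinery of Theorem \ref{thm000}, and (c) the nested-intersection description $\widetilde{\Delta}_\textbf{t}=\bigcap_{n\ge 0}X(\textbf{t},n)$ to get the bony structure.

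First I would handle the negative Lyapunov exponent, i.e.\ statement (2), since everything else will rely on it. The discussion preceding Lemma \ref{neg-exponents} already produces, for every $\varepsilon>0$, a uniform $\delta>0$ such that $\rho(\Psi(t,\cdot),\widetilde{\Psi}(t,\cdot))<\delta$ implies $\lim_n \tfrac1n\log\|\widetilde{\Psi}^n(t,\cdot)\|\le\lambda+\varepsilon<0$ for $\nu^+$-a.e.\ $t$; the metric $\textnormal{dist}$ on $\mathcal{C}(\mathbb{T})$ controls $\rho$ uniformly in $t$, so for $\widetilde{F}$ sufficiently close to $F_\Psi$ the exponent is negative $\nu^+$-a.e. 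Lifting through the semi-conjugacy $\pi$ as in the second half of the proof of Lemma \ref{neg-exponents} then yields a negative maximal fiber Lyapunov exponent for $\widetilde{G}$ with respect to $\nu$.

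Next I would establish statement (1), the continuous bony-graph structure, which is the main obstacle. For $\nu$-a.e.\ $\textbf{t}$, Oseledec applied to the inverse cocycle (as in \cite{BHN}) together with the negative exponent just obtained gives $\textnormal{diam}\bigl(X(\textbf{t},n)\bigr)\le C(\textbf{t})e^{(\lambda+\varepsilon)n}\to 0$, so $\widetilde{\Delta}_\textbf{t}$ is a singleton on a full-measure set $\mathcal{S}_0\subset\mathcal{S}$; define $\widetilde{\gamma}(\textbf{t})$ to be this point and let $\widetilde{\Gamma}$ be its graph. On the complement, $\widetilde{\Delta}_\textbf{t}$ is a nonempty compact connected set (a bone) because each $X(\textbf{t},n)$ is, and $\widetilde{\Delta}=K\sqcup\widetilde{\Gamma}$ with $\nu(p(K))=0$. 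Upper semicontinuity of the multifunction $\textbf{t}\mapsto\widetilde{\Delta}_\textbf{t}$ (hence of $\widetilde{\gamma}$ on $\mathcal{S}_0$) is obtained by a standard nested-compact-set argument: given $\textbf{t}$ and $\varepsilon>0$, pick $N$ so that $X(\textbf{t},N)\subset U_\varepsilon(\widetilde{\Delta}_\textbf{t})$; since the finite composition $\widetilde{f}_\textbf{t}\circ\cdots\circ \widetilde{f}_{\xi^{-N}(\textbf{t})}$ depends continuously on $\textbf{t}$ in the $C^0$ topology, $X(\textbf{t}^\prime,N)\subset U_\varepsilon(\widetilde{\Delta}_\textbf{t})$ for $\textbf{t}^\prime$ in a $d_S$-neighborhood of $\textbf{t}$, and $\widetilde{\Delta}_{\textbf{t}^\prime}\subset X(\textbf{t}^\prime,N)$. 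The hard part inside this step is arguing that a small perturbation preserves the \emph{connectedness} of each fiber slice $X(\textbf{t},n)$; this uses that $\widetilde{f}_t$ is a diffeomorphism onto its image (so preserves connectedness) plus the fact that $X$ itself is connected, so connectedness of each $X(\textbf{t},n)$ follows by induction.

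With (1) and (2) in hand, statements (3)--(5) follow by essentially the same argument as in Theorem \ref{thm22}. The base map $\xi$ is unchanged under perturbation, hence is still an invertible homeomorphism that is Bernoulli (so mixing, so ergodic) with respect to $\nu$. Theorem \ref{thm000} then produces an invariant measurable graph, which by the uniqueness of a $\nu$-a.e.\ contracting-fiber single point must coincide with $\widetilde{\Gamma}$, carries an ergodic invariant measure $\widetilde{\mu}$, and inherits the Bernoulli/mixing property from $(\xi,\mathcal{S},\nu)$. For the SRB conclusion I would repeat verbatim the argument after \eqref{e77}: negative fiber Lyapunov exponent gives $\textnormal{dist}(\widetilde{G}^k(\textbf{t},x),\widetilde{G}^k(\textbf{t},\widetilde{\gamma}(\textbf{t})))\to 0$ uniformly in $x$, so for every continuous $h$ the Birkhoff averages starting from $(\textbf{t},x)$ and from $(\textbf{t},\widetilde{\gamma}(\textbf{t}))$ differ by $o(1)$; the latter reduces via the isomorphism $p|_{\widetilde{\Gamma}}:\widetilde{\Gamma}\to\mathcal{S}$ to the Birkhoff averages of $h\circ(p|_{\widetilde{\Gamma}})^{-1}$ under $\xi$, which converge to $\int\widetilde{h}\,d\nu=\int h\,d\widetilde{\mu}$ by ergodicity of $\xi$.
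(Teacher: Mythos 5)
Your proposal is correct and follows essentially the same route as the paper: robustness of the negative fiber Lyapunov exponent under the metric (\ref{e000}), the pullback (BHN-style) construction of $\widetilde{\gamma}$ along backward orbits giving singleton fibers $\nu$-a.e.\ and bones elsewhere, a finite-time approximation argument for upper semicontinuity, and then Theorem \ref{thm000} plus the Theorem \ref{thm22} arguments for statements (3)--(5). The only differences are cosmetic streamlinings (nested compact sets with exponentially shrinking diameters in place of the paper's explicit telescoping Cauchy estimate, and $C^0$-continuity of finite compositions in place of the paper's $C^2$-closeness/Mean Value Theorem step), which are equivalent in substance.
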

\begin{proof}
Notice that, by Lemma \ref{neg-exponents}, $F_\Psi(t,x)=(\varphi(t), \Psi(t,x))$ admits a negative maximal fiber Lyapunov exponent, say $\lambda$.

Consider a perturbed skew product $\widetilde{F} \in \mathcal{C}(\mathbb{T})$ of $F_\Psi$ defined by $ \widetilde{F}(t,x)=(\varphi(t), \widetilde{\Psi}(t,x)$.
This means that $\sup_{t} dist_{C^1}(f_t^{\pm}, \widetilde{f}_t^{\pm})< \delta$, for sufficiently small $\delta > 0$, where $\widetilde{f}_t, \ t \in S^1$, are the fiber maps of $\widetilde{F}$.
Since
\begin{equation*}
\lim_{n \to \infty}\frac{1}{n}\textnormal{log} \| \Psi^n(t,.)\|=\lambda < 0, \ \textnormal{for} \ \nu^+ \ a.e. \ t \in S^1,
\end{equation*}
then given a sufficiently small $\varepsilon > 0$ there exists $\delta > 0$ such that if
$\rho(\Psi(t,.),\widetilde{\Psi}(t,.))< \delta$, for $\nu^+$ a.e. $t \in S^1$, then
\begin{equation*}
\lim_{n \to \infty}\frac{1}{n}\textnormal{log} \| \widetilde{\Psi}^n(t,.)\|=\lambda + \epsilon < 0, \ \textnormal{for} \ \nu^+ \ a.e. \ t \in S^1.
\end{equation*}
Consequently, given $\varepsilon > 0$ there exists a measurable function $C : S^1 \to \mathbb{R}^+$ such that for $\nu^+$ $a.e.$ $t \in S^1$, we have
\begin{equation*}\label{3.7}
 \| \widetilde{\Psi}^n(t,.)\| < C(t)e^{(\lambda + \varepsilon)n}, \ \textnormal{for} \ \textnormal{all} \ n> 0.
\end{equation*}
In particular, small perturbations $\widetilde{F}$ and $\widetilde{G}$ of $F_\Psi$ and $G_\Psi$, respectively, possessing negative maximal Lyapunov exponents.

Now, we prove that the maximal attractor $\widetilde{\Delta}$ of the perturbed skew product $\widetilde{G}$ is a continuous bony graph. Note that since the base map $\xi$ is ergodic and invertible then $\xi^{-1}$ is also ergodic. Hence
$\xi^{-1}$ is ergodic with respect to $\nu$ and has the same spectrum of Lyapunov exponents by Furstenberg-Kesten Theorem \cite{FK}, see also \cite[Pro.~2]{E2}. We define
$$\psi_n(\textbf{t},x):=\widetilde{g}^n(\xi^{-n}(\textbf{t}),x).$$
The above fact 
ensure that
$$  \lim_{n \to \infty}\frac{1}{n}\textnormal{log} \| \psi_n(\textbf{t},.)\|=\lambda + \epsilon < 0, \ \textnormal{for} \ \nu \ a.e. \ \textbf{t} \in \mathcal{S},$$
where $\lambda < 0$ is the maximal fiber Lyapunov exponent of $G_\Psi$.
Hence there exists $\ell(\textbf{t})$ such that
$$\| \psi_n(\textbf{t},.)\|<e^{n(\lambda + \varepsilon)} \ \ \forall n\geq \ell(\textbf{t}).$$
Thus given $\varepsilon > 0$ there exists a measurable function $C : \mathcal{S} \to \mathbb{R}^+$ such that for $\nu$ a.e. $\textbf{t} \in \mathcal{S}$, we have
\begin{equation*}
 \| \psi_n(\textbf{t},.)\| < C(\textbf{t})e^{(\lambda + \varepsilon)n}, \ \textnormal{for} \ \textnormal{all} \ n> 0.
 \end{equation*}
Now we apply the approach used in the proof of \cite[Thm.~3.1]{BHN} to ensure that for fixed $x \in X$ and $a.e. \ \textbf{t} \in \mathcal{S}$, the sequence $\{\psi_\ell(\textbf{t},x)\}$ is a
Cauchy sequence. Let $$K(x)=\sup_{\textbf{t} \in \mathcal{S}}|\textbf{t}-\widetilde{g}(\textbf{t},x)|$$
and note that for fixed $x$, $K(x)$ is finite as $X$ is compact and $\widetilde{g}$ continuous. Given any $\varepsilon^{\prime}>0$, choose $\ell^{\ast}(\textbf{t})$ sufficiently large such that
$$K(x)c(\textbf{t})\sum_{j=\ell^{\ast}(\textbf{t})}^\infty e^{j(\lambda + \varepsilon)}< \varepsilon^{\prime}. $$
Now, for $m >\ell>\ell^{\ast}(\textbf{t})$ one gets
$$|\psi_m(\textbf{t},x)-\psi_\ell(\textbf{t},x)| \leq k(x)\sum_{j=\ell}^\infty \|\psi_j(\textbf{t},.) \| \leq k(x)c(\textbf{t})\sum_{j=\ell}^\infty e^{j(\lambda + \varepsilon)}< \varepsilon^{\prime}.$$
To see this note that
$$|\psi_m(\textbf{t},x)-\psi_\ell(\textbf{t},x)|=|\psi_m(\textbf{t},x)-\psi_{m-1}(\textbf{t},x)+ \ldots +\psi_{\ell +1}(\textbf{t},x)-\psi_\ell(\textbf{t},x)|.$$
Applying $\widetilde{G}$ to $(\xi^{-k}(\textbf{t}),x)$, one gets $\widetilde{G}(\xi^{-k}(\textbf{t}),x)=(\xi^{-(k-1)}(t),\widetilde{g}(\xi^{-k}(\textbf{t}),x))$.
Thus, $\psi_k(\textbf{t},x)-\psi_{k-1}(\textbf{t},x)=\psi_{k-1}(\textbf{t},\widetilde{g}(\xi^{-k}(\textbf{t}),x)))- \psi_{k-1}(\textbf{t},x)$.
As a result, $$|\psi_{k-1}(\textbf{t},x)-\psi_{k}(\textbf{t},x)| \leq \|\psi_{k-1}(\textbf{t},.) \| |x- \widetilde{g}(\xi^{-k}(\textbf{t}),x))|. $$ Hence
\begin{eqnarray*}
|\psi_m(\textbf{t},x)-\psi_\ell(\textbf{t},x)| &\leq& |\psi_m(\textbf{t},x)-\psi_{m-1}(\textbf{t},x)|+ \ldots +|\psi_{\ell +1}(\textbf{t},x)-\psi_\ell(\textbf{t},x)|\\
&=&\sum_{j=\ell+1}^m | \psi_j(\textbf{t},x)-\psi_{j-1}(\textbf{t},x)|\\
&\leq& \sum_{j=\ell+1}^m \| \psi_{j-1}(\textbf{t},.) \| |x-\widetilde{g}(\xi^{-j}(\textbf{t}),x))|\\
&\leq& \sum_{j=\ell+1}^\infty \| \psi_{j-1}(\textbf{t},.) \| K(x).
\end{eqnarray*}
Finally, for $\ell > \ell^{\ast}(\textbf{t})$ one has that
$$|\psi_m(\textbf{t},x)-\psi_\ell(\textbf{t},x)| \leq  K(x) \sum_{j=\ell+1}^\infty \| \psi_{j-1}(\textbf{t},.) \|
=K(x)c(\textbf{t})\sum_{j=\ell+1}^\infty e^{(j-1)(\lambda + \varepsilon)}< \varepsilon^{\prime}.$$
This shows that for every $x \in X$ and $a.e. \ \textbf{t} \in \mathcal{S}$ the sequence  $\{\psi_m(\textbf{t},x)\}$ is a
Cauchy sequence. Now, define $$\widetilde{\gamma}(\textbf{t})= \lim_{n \to + \infty}\psi_n(\textbf{t},0).$$
Since $$\widetilde{G}(\textbf{t}, \psi_\ell(\textbf{t},0))=(\xi(\textbf{t}),\psi_{\ell +1}(\xi(\textbf{t}),0)), $$
we see that
$$\widetilde{G}(\textbf{t}, \widetilde{\gamma}(\textbf{t}))=(\xi(\textbf{t}),\widetilde{\gamma}(\xi(\textbf{t})) ) $$
and hence $\widetilde{\Gamma}$, the graph of $\widetilde{\gamma}$ is invariant under $\widetilde{G}$.
Furthermore, by construction
$$\lim_{n \to +\infty}|\widetilde{g}^n(\textbf{t},x)-\widetilde{g}^n(\textbf{t},0)|=0. $$
This is because
$$|\widetilde{g}^n(\textbf{t},x)-\widetilde{g}^n(\textbf{t},0)| \leq \|\widetilde{g}^n(\textbf{t},.)\| |x|$$
and $\|\widetilde{g}^n(\textbf{t},.)\| \to 0$ as $n \to +\infty$. Thus, for almost every $\textbf{t} \in \mathcal{S}$,
$$\lim_{n \to + \infty}\widetilde{g}_{\textbf{t}} \circ \widetilde{g}_{\xi^{-1}(\textbf{t})} \circ \ldots \circ \widetilde{g}_{\xi^{-n}(\textbf{t})}(X)= \lim_{n \to + \infty}\widetilde{g}_{\textbf{t}} \circ \widetilde{g}_{\xi^{-1}(\textbf{t})} \circ \ldots \circ \widetilde{g}_{\xi^{-n}(\textbf{t})}(0).$$
Hence, $\widetilde{\gamma}$ induces a bony graph for $\widetilde{G}$. Since $\widetilde{\Delta} \cap X_\textbf{t}=\bigcap_{n\geq 0} X(\textbf{t},n)$, where $X_\textbf{t}:=\{\textbf{t}\}\times X$, $X(\textbf{t},n):=\widetilde{g}_{\textbf{t}} \circ \widetilde{g}_{\xi^{-1}(\textbf{t})}\circ \ldots \circ \widetilde{g}_{\xi^{-n}(\textbf{t})}(X)$ and by the construction this bony graph is equal to $\widetilde{\Delta}$.
By Theorem \ref{thm000}, the graph $\widetilde{\Gamma}$ is the support of an invariant ergodic measure.
It remains to prove that the bony graph is upper semicontinuous. It is enough to show that $\widetilde{\Delta}_{\textbf{t}}:=\widetilde{\Delta} \cap X_\textbf{t}$ is upper semicontinuous. Let $\textbf{t} \in \mathcal{S}$ and $\varepsilon > 0$ be given.
If $n$ is big enough then
$$\widetilde{g}_{\textbf{t}} \circ \widetilde{g}_{\xi^{-1}(\textbf{t})} \circ \ldots \circ \widetilde{g}_{\xi^{-n}(\textbf{t})}(X) \subset U_{\frac{\varepsilon}{2}}\widetilde{\Delta}_{\textbf{t}}.$$
Let $\textbf{t}^{\prime}$ be sufficiently close to $\textbf{t}$. Then $\widetilde{g}_{\textbf{t}^{\prime}} \circ \widetilde{g}_{\xi^{-1}(\textbf{t}^{\prime})} \circ \ldots \circ \widetilde{g}_{\xi^{-n}(\textbf{t}^{\prime})}$ is $C^2$ close to $\widetilde{g}_{\textbf{t}} \circ \widetilde{g}_{\xi^{-1}(\textbf{t})} \circ \ldots \circ \widetilde{g}_{\xi^{-n}(\textbf{t})}$.
Let $x \in X$ and $L_x$ be the line segment through the origin and connects the point $x$ to $-x$.
 Since $X$ is convex so $X=\cup_{x \in X}L_x$.
Now by using the Mean-value Theorem, for each $x \in X$ one has
$$\widetilde{g}_{\textbf{t}^{\prime}} \circ \widetilde{g}_{\xi^{-1}(\textbf{t}^{\prime})} \circ \ldots \circ \widetilde{g}_{\xi^{-n}(\textbf{t}^{\prime})}(L_x)\subset U_{\frac{\varepsilon}{2}}(\widetilde{g}_{\textbf{t}} \circ \widetilde{g}_{\xi^{-1}(\textbf{t})} \circ \ldots \circ \widetilde{g}_{\xi^{-n}(\textbf{t})}(L_x))$$
if $\widetilde{g}_{\textbf{t}^{\prime}} \circ \widetilde{g}_{\xi^{-1}(\textbf{t}^{\prime})} \circ \ldots \circ \widetilde{g}_{\xi^{-n}(\textbf{t}^{\prime})}$ is sufficiently $C^2$ close to $\widetilde{g}_{\textbf{t}} \circ \widetilde{g}_{\xi^{-1}(\textbf{t})} \circ \ldots \circ \widetilde{g}_{\xi^{-n}(\textbf{t})}$.
Therefore $$\widetilde{\Delta}_{\textbf{t}^{\prime}}\subset \widetilde{g}_{\textbf{t}^{\prime}} \circ \widetilde{g}_{\xi^{-1}(\textbf{t}^{\prime})} \circ \ldots \circ \widetilde{g}_{\xi^{-n}(\textbf{t}^{\prime})}(X)\subset U_{\frac{\varepsilon}{2}}(\widetilde{g}_{\textbf{t}} \circ \widetilde{g}_{\xi^{-1}(\textbf{t})} \circ \ldots \circ \widetilde{g}_{\xi^{-n}(\textbf{t})}(X))\subset U_{\varepsilon}(\widetilde{\Delta}_{\textbf{t}}).$$
Hence, the bony graph is upper semicontinuous and $(1)$ is satisfied.

The proof of the other statements are completely similar to the proof Theorem \ref{thm22} so it is omitted.
\end{proof}
Consider the projection $\widehat{\mu}=(\pi \times id)_{\ast}\widetilde{\mu}$, where
$\pi: \mathcal{S} \to S^1$ is defined by (\ref{e99}). Since $\widetilde{\mu}$ is an SRB measure with $\textnormal{supp}(\widetilde{\mu})=\widetilde{\Gamma}$ so $\widehat{\mu}$ is also an SRB measure for $\widetilde{F}$ with $\textnormal{supp}(\widehat{\mu})=(\pi \times id)\widetilde{\Gamma}$.
Hence the next theorem follows immediately.
\begin{theorem}
There exists a nonempty open set in $\mathcal{C}(\mathbb{T})$ such that any skew product $F$
from this set admits an attracting invariant set $\widehat{\Delta}$ for which the following statements hold:
\begin{enumerate}
\item $\widehat{\Delta}$ is the $(\pi \times id)$-image of a continuous bony graph.
    \item $\widehat{\Delta}$ is attracting in the sense that for almost all $t \in S^1$ and for every $x \in X$, $\lim_{n \to \infty}|\pi_x \widehat{\Delta} - \pi_x F^n(t,x)|=0$.
       \item $\widehat{\Delta}$ is the support of an ergodic invariant measure $\widehat{\mu}$.
  \item $(F, \widehat{\Delta}, \widehat{\mu})$ is Bernoulli and therefore it is mixing.
  \item $\widehat{\mu}$ is a unique $\textnormal{SRB}$ measure of $F$.
\end{enumerate}
\end{theorem}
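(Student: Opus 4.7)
The plan is to deduce this theorem essentially as a consequence of Theorem~\ref{thm44} applied to the extension $\widetilde{G}$ of any small perturbation $\widetilde{F}$ of $F_\Psi$. Concretely, I would let $\mathcal{U}\subset\mathcal{C}(\mathbb{T})$ be a sufficiently small neighborhood of $F_\Psi$ with respect to the metric (\ref{e000}); this is the nonempty open set in the statement. For any $F=\widetilde{F}\in\mathcal{U}$, form its extension $\widetilde{G}$ on $\mathcal{S}\times X$ as in (\ref{e66}), which lies in a neighborhood of $G_\Psi$ to which Theorem~\ref{thm44} applies. That theorem produces a continuous-bony invariant set $\widetilde{\Delta}$ with an upper semicontinuous graph function $\widetilde{\gamma}:\mathcal{S}\to X$ whose graph $\widetilde{\Gamma}$ supports a Bernoulli SRB measure $\widetilde{\mu}$ with negative maximal fiber Lyapunov exponent. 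Setting
\[
\widehat{\Delta}:=(\pi\times\mathrm{id})(\widetilde{\Delta}),\qquad \widehat{\mu}:=(\pi\times\mathrm{id})_{\ast}\widetilde{\mu},
\]
I expect that the five properties of $\widehat{\Delta}$ follow by pushing $\widetilde{\Delta}$ and $\widetilde{\mu}$ through the semiconjugacy $\pi\times\mathrm{id}$, using that $\pi\circ\xi=\varphi\circ\pi$ intertwines $\widetilde{G}$ with $\widetilde{F}$.

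For item~(1), I would observe that being a continuous bony graph is preserved under $\pi\times\mathrm{id}$ by definition: the graph part of $\widetilde{\Delta}$ maps to the graph part of $\widehat{\Delta}$ and the bones to bones, while upper semicontinuity is inherited from $\widetilde{\gamma}$ because $\pi$ is continuous and open on each symbolic fiber. For item~(3), since $(\pi\times\mathrm{id})$ is a measurable factor map and $\widetilde{\mu}$ is $\widetilde{G}$-invariant and ergodic, the push-forward $\widehat{\mu}$ is $\widetilde{F}$-invariant and ergodic, with $\mathrm{supp}(\widehat{\mu})=\widehat{\Delta}$; this is exactly the content of Lemma~\ref{L1.1} combined with the bony-graph structure (an argument already used just before (\ref{e444})). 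For item~(4), Bernoullicity of $\widetilde{F}$ on $(\widehat{\Delta},\widehat{\mu})$ is the \emph{definition} of Bernoulli in the non-invertible setting, since $\widetilde{G}$ is by construction the natural extension of $\widetilde{F}$ and $(\widetilde{G},\widetilde{\Gamma},\widetilde{\mu})$ is Bernoulli by Theorem~\ref{thm44}~(4); mixing follows from Bernoullicity.

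For the attracting property~(2), I would combine the negative maximal fiber Lyapunov exponent from Lemma~\ref{neg-exponents} (stable under small $C^1$ perturbations, as explained in the proof of Theorem~\ref{thm44}) with the fact that $\widetilde{\Gamma}$ is an invariant graph attracting all fiberwise orbits of $\widetilde{G}$ in the sense of Theorem~\ref{thm000}. Projecting this conclusion through $\pi\times\mathrm{id}$ yields, for $\nu^+$-a.e.\ $t\in S^1$ and every $x\in X$, $\lim_{n\to\infty}|\pi_x\widehat{\Delta}-\pi_x F^n(t,x)|=0$, since $\nu=\pi_\ast^{-1}\nu^+$ has full measure on fibers of $\pi$. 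Item~(5) then follows because $\widetilde{\mu}$ was already shown to be an SRB measure on $\widetilde{\Delta}$: the identity (\ref{e444}) localizes $\mathrm{supp}(\widehat{\mu})$ as a maximal attractor with open neighborhood, and the attraction statement in~(2) together with the argument used to derive (\ref{e77})--(\ref{e78}) for $G_\Psi$ transfers the Birkhoff-average SRB property from $\widetilde{G}$ down to $\widetilde{F}$. Uniqueness of the SRB measure is then forced by ergodicity of $\widehat{\mu}$ together with the fact, already noted after (\ref{e444}), that $(\pi\times\mathrm{id})\widetilde{\Gamma}$ is a global attractor with basin $S^1\times X$.

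The main obstacle I anticipate is the careful passage from the extension to the base when verifying item~(5): I must ensure that the SRB property, which is a statement about Lebesgue-typical initial conditions in $S^1\times X$, really does descend from the symbolic system $(\widetilde{G},\mathcal{S}\times X,\widetilde{\mu})$, where the analogous notion is with respect to $\nu\times\mathrm{Leb}_X$. The bridge is that $\nu$ disintegrates over $\nu^+$ and that the fiber attraction is uniform enough (provided by the uniform contraction rate inside the open set $U$ constructed before (\ref{e444})) to push a full-Lebesgue basin in $S^1\times X$ up to a full-$\nu\times\mathrm{Leb}_X$ basin in $\mathcal{S}\times X$; this is precisely the step where the non-invertibility of $\varphi$ and the openness of the neighborhood $\mathcal{U}$ both play a role, and where I would be most careful.
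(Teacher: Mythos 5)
Your proposal is correct and follows essentially the same route as the paper: the paper obtains this theorem by pushing the conclusions of Theorem~\ref{thm44} for the extension $\widetilde{G}$ down through the semiconjugacy $\pi\times\mathrm{id}$, setting $\widehat{\mu}=(\pi\times\mathrm{id})_{\ast}\widetilde{\mu}$ and $\widehat{\Delta}=(\pi\times\mathrm{id})\widetilde{\Gamma}$, exactly as you do (the paper states this transfer in one line, while you supply the item-by-item details, including the basin and uniqueness argument sketched around (\ref{e444})).
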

Now Theorem A follows by the previous theorem.
To prove Theorem B, we apply Gorodetski-Ilyashenko-Negut techniques \cite{IN}.
By construction the skew product $F_\Psi$ and each perturbed skew product $\widetilde{F} \in \mathcal{C}(\mathbb{T})$ that is close enough to $F_\Psi$ satisfy the following
 \emph{modified dominated splitting condition} \cite[Def.~2]{IN}
$$ \max(\frac{1}{k}+\| \frac{\partial f_t^{\pm}}{\partial t}\|_{C_0},\| \frac{\partial f_t^{\pm}}{\partial x}\|_{C_0} )=L < k.$$
Consider any map $\mathcal{F}\in \mathcal{E}(\mathbb{T})_k$ which is $C^1$-close to $F_\Psi$.
Notice that though $F_\Psi$ is a skew product, $\mathcal{F}$ is usually not.
However, for small enough perturbations of $F_\Psi$, we are able to rectify the perturbed map and recover the structure.
By \cite[Thm.~5]{IK}, \cite[Thm.~9.2]{V} and \cite[Thm.~9.3]{V}, we will provide a solenoidal skew product $F$, $C^1$-close to $F_\Psi$, which is conjugated to the endomorphism $\mathcal{F}$.
These facts together with Theorem A and the argument used in Section 9 of \cite{V} imply Theorem B.
\section*{Acknowledgements}
During the preparation of this article the second author was partially
supported by grant from IPM (No. 94370127). He also thanks ICTP
for supporting through the association schedule.

\end{document}